\newtheorem{Lemma}{Lemma}[section]
\newtheorem{Theorem}{Theorem}
\newtheorem{Proposition}[Lemma]{Proposition}
\newtheorem{Corollary}[Lemma]{Corollary}
\newtheorem{Remark}{Remark}[section]
\newenvironment{Acknowledgment}%
 {\begin{trivlist}\item[]\textbf{Acknowledgments.}}{\end{trivlist}}
\makeatletter\@addtoreset{figure}{section}\makeatother
\makeatletter \@addtoreset{equation}{section} \makeatother
\newcommand{\R}{\mathbb{R}}
\newcommand{\N}{\mathbb{N}}
        \newcommand{\mc}[1]{\mathcal{#1}}
        \newcommand{\tl}[1]{\tilde{#1}}
        \newcommand{\lp}{\left}
        \newcommand{\rp}{\right}
                \newcommand{\la}{\lp\langle}
        \newcommand{\ra}{\rp\rangle}
        \newcommand{\beq}{\begin{equation}}
        \newcommand{\eeq}{\end{equation}}
        \newcommand{\ba}{\begin{align}}
        \newcommand{\ea}{\end{align}}
        \newcommand{\p}{\partial}
        \newcommand{\re}{\mathrm{e}}
        \newcommand{\eps}{\epsilon}
        \renewcommand{\Re}{\mathrm{Re}}
                        \newcommand{\rs}{\mathrm{s}}
                        \newcommand{\rcs}{\mathrm{cs}}
                                       \newcommand{\rcu}{\mathrm{cu}}
                                                \newcommand{\ru}{\mathrm{u}}
\title{Fronts in the wake of a parameter ramp: slow passage through pitchfork and fold bifurcations}
\author{Ryan Goh\thanks{Department of Mathematics and Statistics, Boston University, 111 Cummington Mall, Boston,  MA 02215, USA; \texttt{rgoh@bu.edu}.},
 Tasso J. Kaper\thanks{Department of Mathematics and Statistics, Boston University, 111 Cummington Mall, Boston,  MA 02215, USA.},
  Arnd Scheel\thanks{School of Mathematics, University of Minnesota, 206 Church St. SE, Minneapolis,  MN 55455, USA.}, 
  Theodore Vo\thanks{School of Mathematics, Monash University, Clayton, Victoria 3800, Australia~}.}
\begin{document}

\maketitle

\abstract{

This work studies front formation in the Allen-Cahn equation with a parameter heterogeneity which slowly varies in space. In particular, we consider a heterogeneity which mediates the local stability of the zero state and subsequent pitchfork bifurcation to a non-trivial state.   For slowly-varying ramps which are either rigidly propagating in time or stationary, we rigorously establish existence and stability of positive, monotone fronts and give leading order expansions for their interface location. For non-zero ramp speeds, and sufficiently small ramp slopes, the front location is determined by the local transition between convective and absolute instability of the base state and leads to an O(1) delay beyond the instantaneous pitchfork location before the system jumps to a nontrivial state. The slow ramp induces a further delay of the interface controlled by a slow-passage through a fold of strong- and weak-stable eigenspaces of the associated linearization. We introduce projective coordinates to de-singularize the dynamics near the trivial state and track relevant invariant manifolds all the way to the fold point. We then use geometric singular perturbation theory and blow-up techniques to locate the desired intersection of invariant manifolds. For stationary ramps, the front is governed by the slow passage through the instantaneous pitchfork bifurcation with inner expansion given by the unique Hastings-McLeod connecting solution of Painlev\'{e}'s second equation. We once again use geometric singular perturbation theory and blow-up to track invariant manifolds into a neighborhood of the non-hyperbolic point where the ramp passes through zero and to locate intersections.

%

%
%
%

\textbf{Keywords:} Allen-Cahn, invasion front, slow parameter ramp, geometric singular perturbation theory, geometric blow-up, bifurcation delay

\textbf{Mathematics Subject Classification:} 34E15, 34E13 , 35B25,  35B36, 

 }

%

\section{Introduction }

The interaction of coherent structures, such as fronts, patterns, and waves, with spatio-temporal heterogeneity has recently attracted much interest in many scientific domains. Generally, one is interested how heterogeneities can nucleate, perturb, and mediate structures formed in a system. One such process which particularly motivates this work is that of directional quenching. Here, a heterogeneity travels across a medium, either controlled by the experimenter or another part of the system, rendering a stable equilibrium state unstable and hence nucleating the formation of a coherent structure in its wake. The speed and shape of the quenching mechanism then directly controls the structure formed in the wake. Examples of such mechanisms arise in fluid systems, phase-separative systems, chemical reactions, as well as biological applications; see \cite{goh21a} for a recent review. 

While the quenching heterogeneity often varies sharply in space, so that that medium is rendered strongly unstable at the quenching location, heterogeneities which are slowly varying in space are also prevalent in many applications. To name a few specific examples, we mention wavenumber selection in Rayleigh-Benard convection with slowly varying Rayleigh constant \cite{kramer82,riecke1987perfect}, oscillations in fluid flow past a slowly developing obstacle \cite{hunt91,chomaz99}, stripe orientation in morphogenesis due to gradients in production rates and parameters \cite{HISCOCK2015408}, and formation of cortex domain boundaries via spatially varying signal gradients \cite{feng21}. See also \cite{kuske} for theory about patterns in slowly varying environments and more applications.  A different but related set of phenomena arise in slowly-varying temporal heterogeneities, where pattern-formation is dynamically mediated with the slow evolution of some parameter, with examples arising in ecological systems \cite{rietkerk21}, soft-matter defects \cite{stoop2018defect}, and cosmological studies \cite{kibble1976topology,zurich85}.   Here the background medium is slowly rendered unstable in some fashion leading to a variety of effects, such as the selection of a specific wavenumber of striped pattern, the pinning of a front interface between two states at a certain location, or also the suppression of defect formation throughout the resulting coherent structure. See also \cite{knobloch2015problems}  and references therein for a recent review of related problems of pattern formation on time-varying domains.

\paragraph{Allen-Cahn model equation}
In this work, we wish to rigorously study front solutions in a prototypical partial differential equation with a slowly-varying directional-quenching mechanism. We study such fronts in the scalar Allen-Cahn equation as it will serve as an approachable but still relevant setting to rigorously characterize the interaction of the front with the slowly varying quench, without dealing with unnecessary technical complications of more realistic equations.  We expect our results to have bearing on similar interactions in other prototypical pattern forming systems with supercritical nonlinearities such as the Ginzburg-Landau equation and the Swift-Hohenberg equation, as well as more realistic models for the phenomena mentioned above.  We also remark that the sharp quenched case has been considered in Allen-Cahn, in both one- and two-spatial dimensions, in the works \cite{monteiro2017phase,monteiro2018horizontal}.
Our equation takes the form
\begin{align}\label{e:ac}
u_t = u_{xx} + \mu(x - ct) u - u^3,\quad (x,t)\in \R\times \R_+,\\
\mu(\xi) := -\tanh(\epsilon\xi),\quad 0<\epsilon \ll 1.
\end{align}
Here, as $\epsilon$ is small, the parameter heterogeneity, or ``ramp", slowly varies from -1 at $\xi := x- ct =+\infty$ to 1 at $\xi = -\infty$,  making the equilibrium $u = 0$ locally stable for $\xi:= x - ct>0$ and locally unstable for $\xi= x-ct<0$. Further, $c$ is an external control parameter which controls the speed at which the quench rigidly propagates through the medium. This particular quenching function is chosen as it is the solution of a simple first-order differential equation \eqref{e:tw0b}. While this quenching function simplifies the technical analysis, we expect similar phenomena to occur in a neighborhood of $\mu = 0$ with other slowly varying quenching terms, such as $\mu(\xi) = -\epsilon \xi$.

We study the formation of traveling front solutions $u(x-ct)$ which converge to $0$ at $x\rightarrow+\infty$ and $1$ at $x\rightarrow-\infty$.  Front solutions of this type satisfy the autonomous travelling wave ordinary differential equation
\begin{align}
0&= u_{\xi\xi} + cu_\xi + \mu u - u^3,\label{e:tw0a}\\
0&=\mu_\xi +\epsilon(1-\mu^2),\quad \mu(0) = 0.\label{e:tw0b}
\end{align}
We report on front solutions for quenching speeds $c\in[0,2)$,
beginning with the dynamic quench with $c\in (0,2)$
in Sections \ref{ss:phen}-\ref{ss:thm1},
and then for the stationary quench $c=0$
in Sections \ref{ss:hm}-\ref{ss:thm2}.

\subsection{Fronts formed by a dynamic quench with $c\in (0,2)$: Phenomena
and numerics}\label{ss:phen}

The moving fronts created by a dynamic quench with $c \in (0,2)$
may be understood heuristically and numerically, as follows.
Figure \ref{f:profs} depicts front solutions to \eqref{e:tw0a}-\eqref{e:tw0b} 
obtained through numerical continuation in AUTO07p \cite{doedel2007auto} for a range of $\epsilon$ and $c$ values. 
For $\mathcal{O}(1)$ values of $c \in (0,2)$, we observe that, for large negative $\xi$, the solution tracks the quasi-stationary, or frozen coefficient, equilibrium value $\sqrt{\mu(\xi)}$. 
At some negative value of $\xi$, 
the solution profile quickly jumps down to values close to zero. We will later refer to this location as the front interface and denote the corresponding $\mu$ and $\xi$-values as $\mu_\mathrm{fr}$ and $\xi_\mathrm{fr}$, respectively; 
see \eqref{e:mfr}. 
For $\mathcal{O}(1)$ values of $c>0$ and for $0<\epsilon\ll1$, 
the central observation is that 
the front remains close to zero for an interval 
of length $\mathcal{O}(1)$ in $\mu$, or $\mathcal{O}(\epsilon^{-1})$  in $\xi<0$,
where $\mu(\xi)>0$ and the trivial state is unstable.  
The leading-order size of this interval may be determined asymptotically,
by studying the transition from absolute to convective instability.

\begin{figure}[h!]
\centering
\includegraphics[trim = 0.05cm 0.05cm 0.05cm 0.05cm,clip,width=0.5\textwidth]{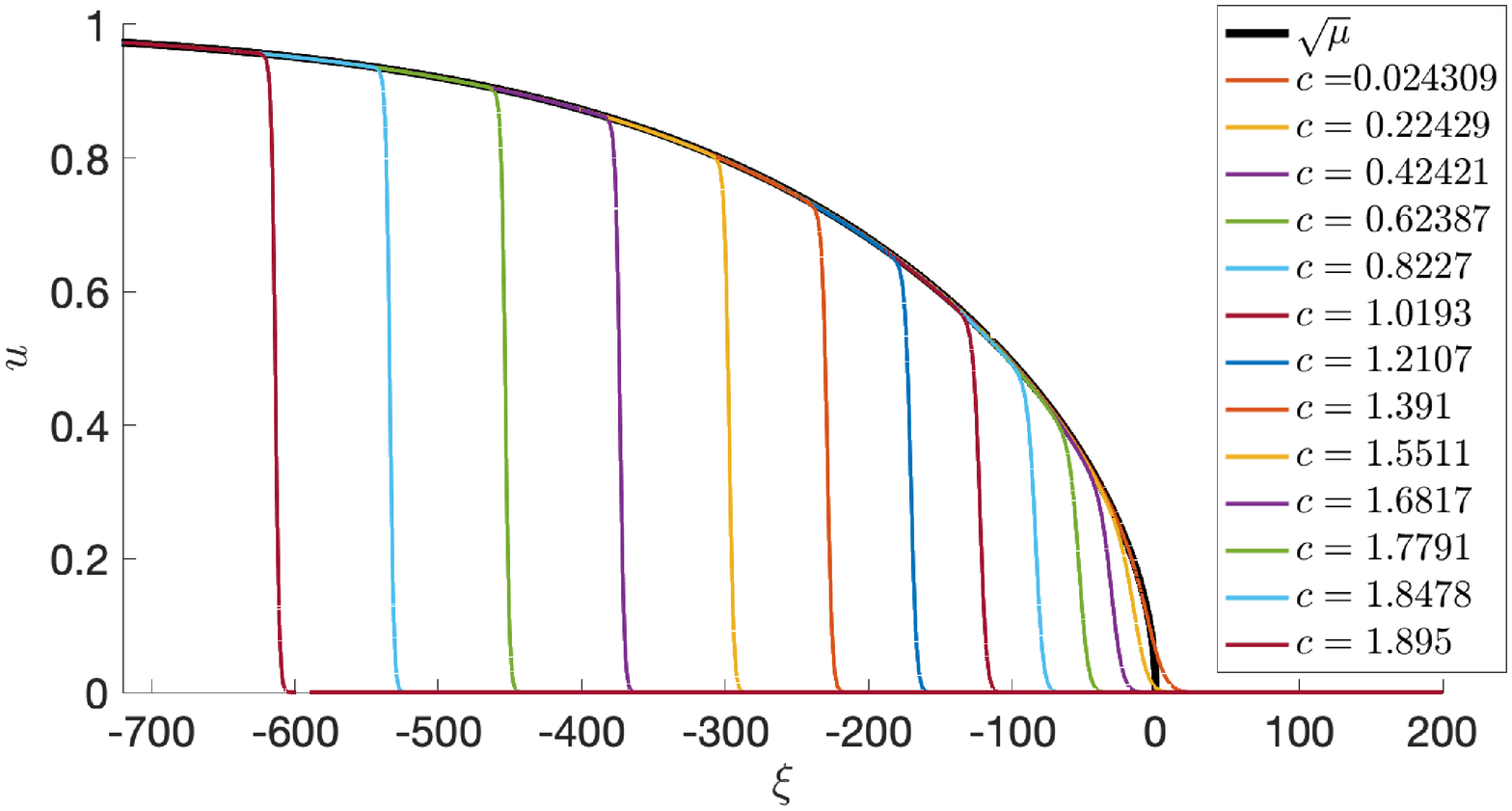}\hspace{-0.2in}
\includegraphics[trim = 0.05cm 0.05cm 0.05cm 0.05cm,clip,width=0.5\textwidth]{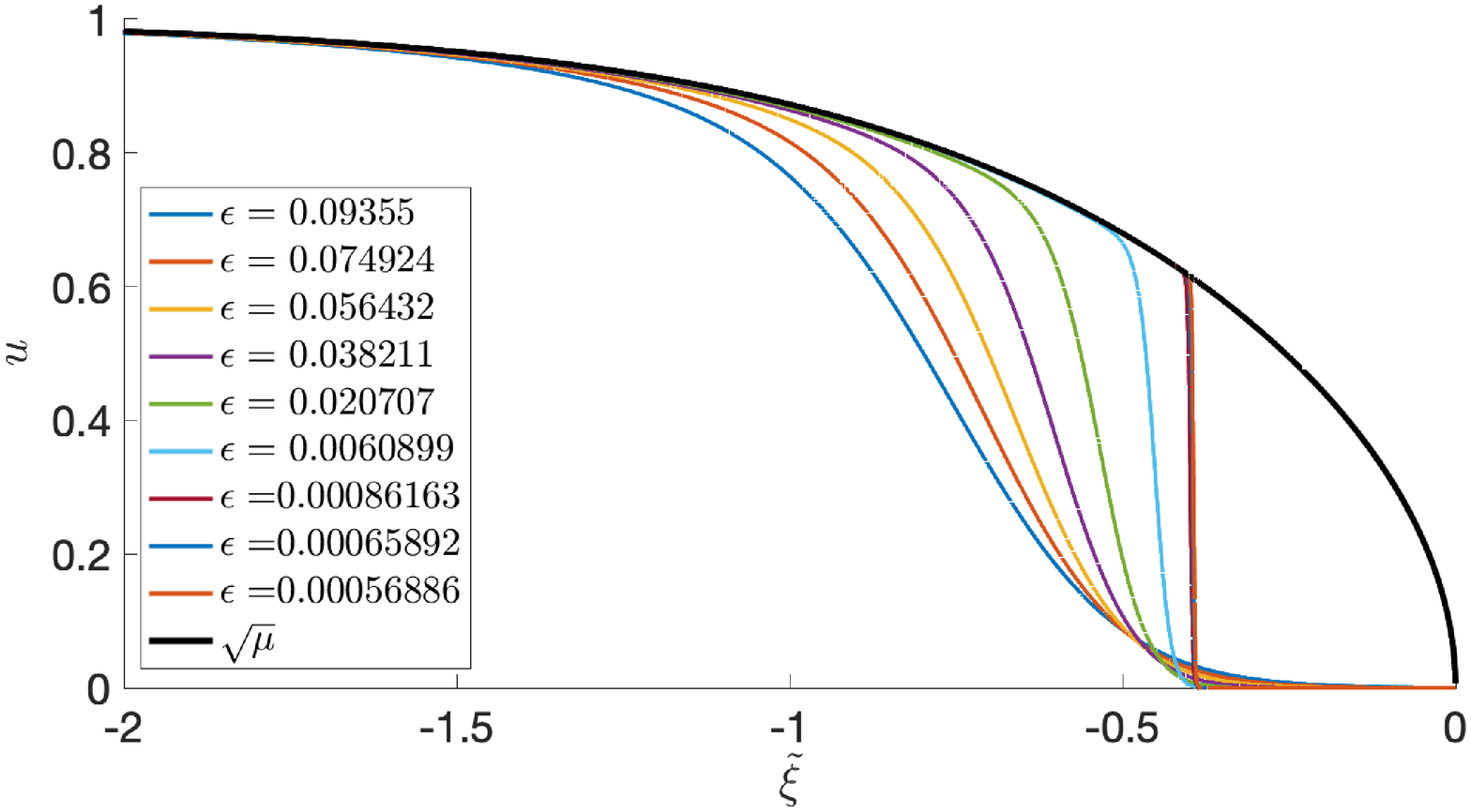}\hspace{-0.2in}\\
\caption{Results of numerical continuation of the traveling wave of \eqref{e:tw0a}-\eqref{e:tw0b} using AUTO07p. Left: Solutions $u(\xi)$ for a range of $c$-values (values in legend) with $\epsilon = 0.0025$ fixed. Right: Solutions $u$ against the rescaled variable $\tilde \xi = \epsilon\xi$ for a range of $\epsilon$ values (values in legend) with $c = 1.2$ fixed, along with $\mu(\, \tilde \xi \,)^{1/2}$ for $\tilde\xi<0$. 
 }\label{f:profs}
\end{figure}

\begin{figure}[h!]
\centering
\includegraphics[trim = 0.0cm 0.0cm 0.0cm 0.0cm,clip,width=0.32\textwidth]{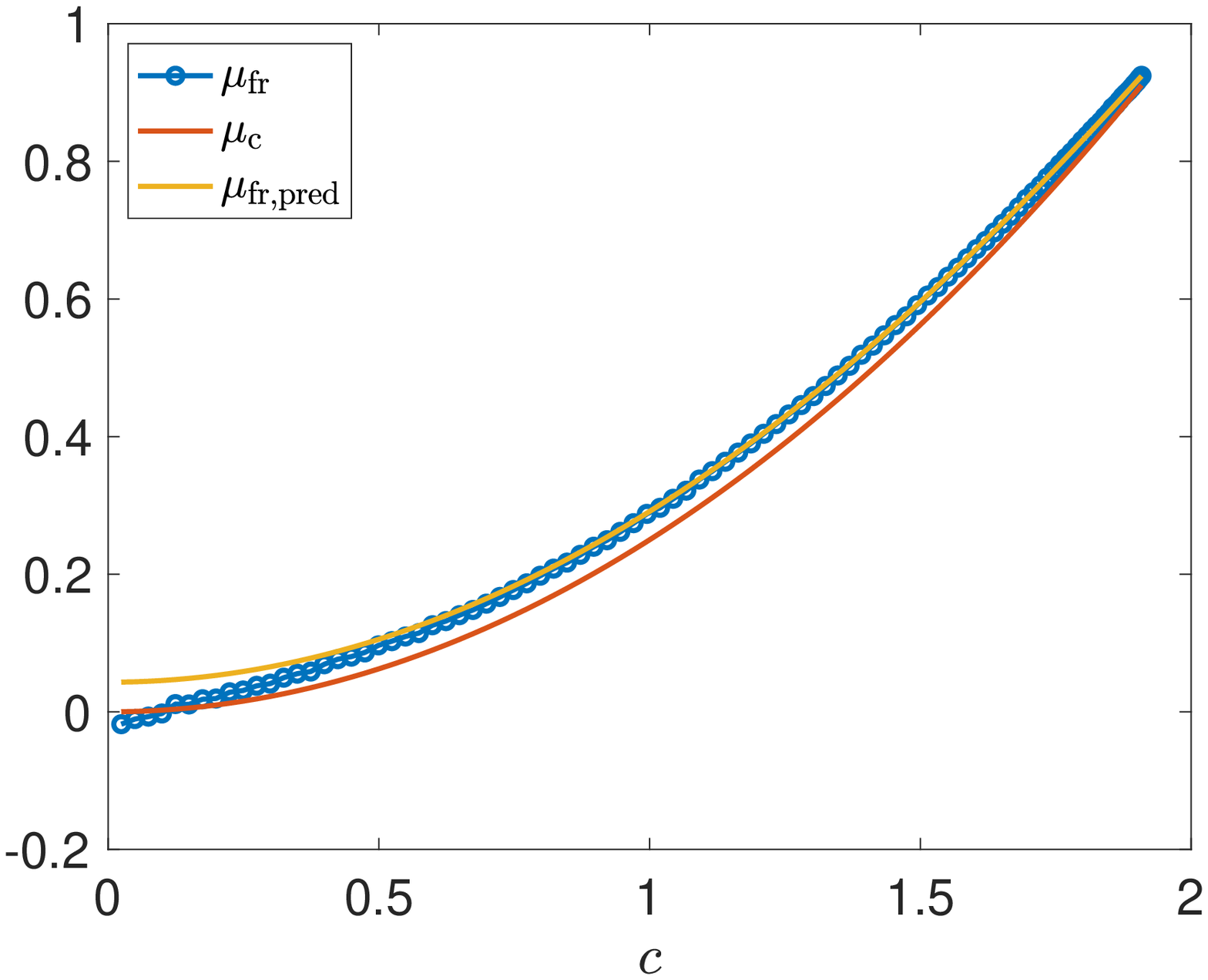}\hspace{-0.05in}
\includegraphics[trim = 0.0cm 0.0cm 0.0cm 0.0cm,clip,width=0.33\textwidth]{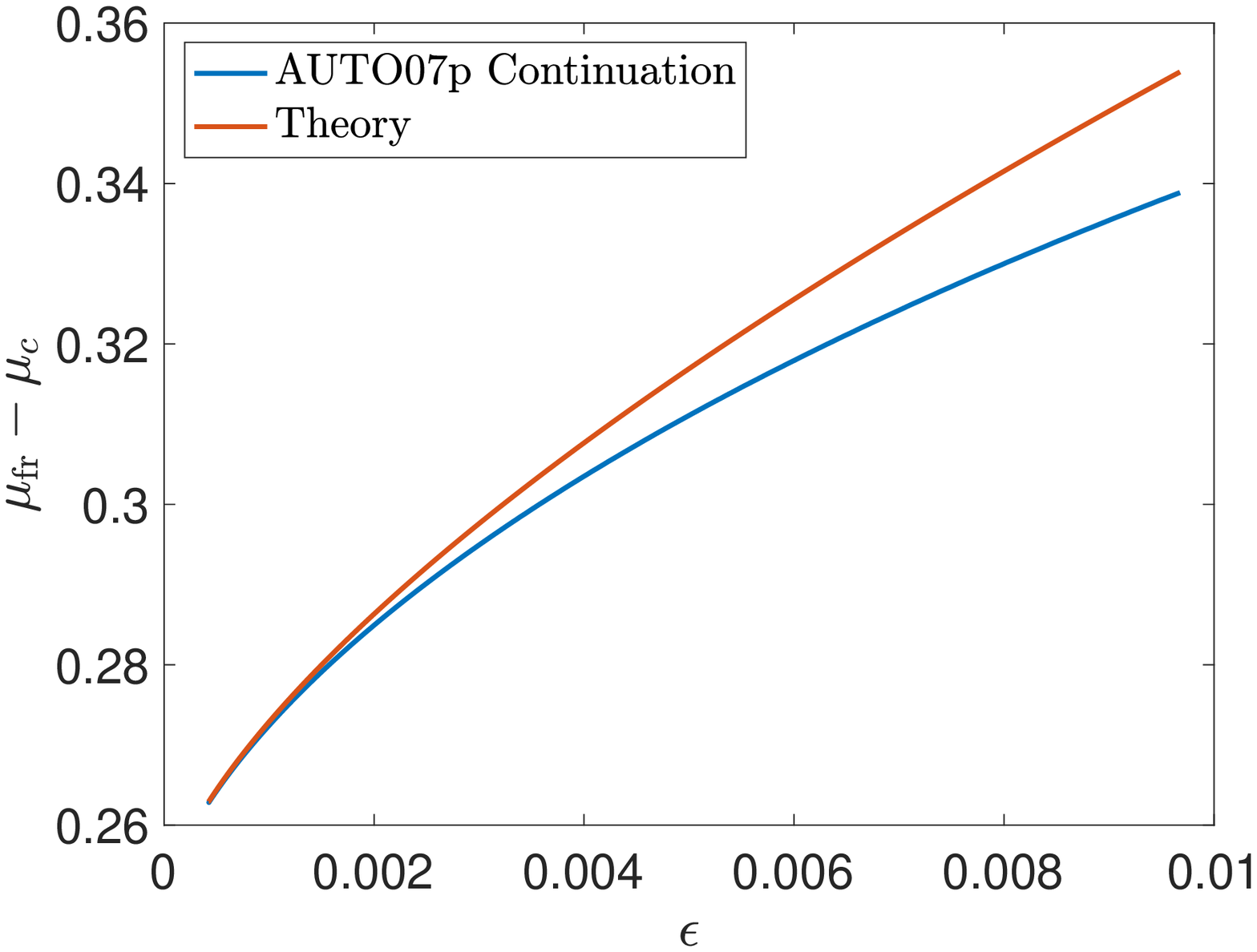}\hspace{-0.1in}
\includegraphics[trim = 0.0cm 0.0cm 0.0cm 0.0cm,clip,width=0.32\textwidth]{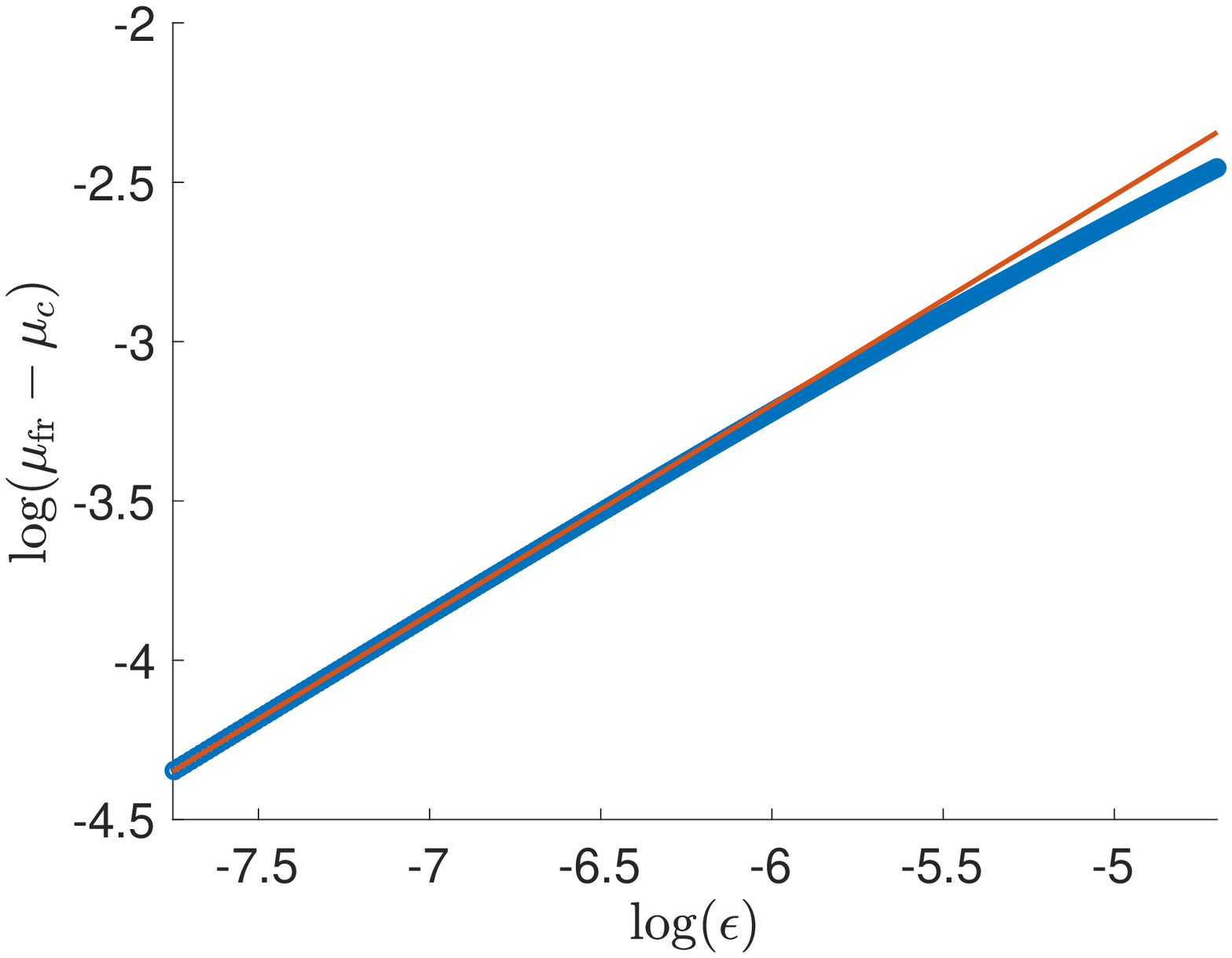}\hspace{-0.0in }
\caption{The front location $\mu_\mathrm{fr}$ obtained from numerical simulations and compared to the theoretical prediction $\mu_\mathrm{fr,pred}$. Left: plot of $\mu_\mathrm{fr}$ (blue circles), $\mu_c$ (orange line), and $\mu_\mathrm{fr,pred}$ (yellow line) given by the expansion \eqref{e:mufr} from Theorem \ref{t:0} for a range of $c$ values with $\epsilon = 0.0025$ fixed. Center: plot of $\mu_\mathrm{fr} - \mu_c$ comparing numerics (blue) and theoretical prediction \eqref{e:mufr} (orange) for $c = 1.2$. Right: Log-Log plot of $\mu_\mathrm{fr} - \mu_c$ against $\epsilon$ (blue), with linear fit (orange) of 10 left-most data points, also for $c = 1.2$. The measured slope is $0.650$. }\label{f:profs2}
\end{figure}

\paragraph{Absolute instability and the leading-order front interface} The leading order spatial delay in growth in the front interface behind the quenching threshold $\mu = 0$ is controlled by the transition between convective and absolute instability of the trivial state as $\mu$ increases towards 1 for decreasing $\xi$. We note this behavior was also observed and non-rigorously studied in the work \cite{chomaz99}. 
  To understand this in heuristic terms, consider the PDE \eqref{e:ac}, posed in the co-moving frame $\xi = x - ct$, with initial condition close to the trivial state except for a small localized perturbation centered around some $\xi>0$. As time increases, the perturbation will decay and be convected leftward until it reaches $\xi<0$ when it will start to weakly grow while still being convected leftward. Thus, at each fixed small $\xi<0$ the perturbation will decay pointwise.  This behavior will continue for more negative $\xi$ until $\mu$ is sufficiently large to induce pointwise growth, after which the front will grow to the non-trivial nonlinear state. This transition in growth type is known as the transition between convective and absolute instability. 

To further understand this, we briefly digress to summarize the concepts of absolute and convective instability of an equilibrium state. For more detailed discussions see \cite{sandstede2000absolute}. Consider the homogeneous Allen-Cahn equation, with $\mu$ a fixed constant, linearized around the trivial equilibrium $u = 0$, and posed in the co-moving frame with speed $c$,
\begin{align}\label{e:L}
v_t = v_{\xi\xi} + cv_\xi + \mu v =: L(\mu,c) v
\end{align}  
The trivial state is \emph{convectively unstable} if, for given $\mu,c>0$, a localized perturbation grows but is convected into the bulk at $\xi = -\infty,$ or in other words, the trivial state is unstable in the $L^2$-norm while locally at each point small perturbations decay over time. The state is \emph{absolutely unstable} if localized perturbations grow both in the $L^2$-norm and pointwise.  This transition can be located by studying the associated linear dispersion relation, obtained by inserting the ansatz $\re^{\lambda t + \nu \xi}$ into \eqref{e:L},
\begin{equation}\label{e:ldsp}
0 = d(\lambda, \nu,c) := \nu^2 + c\nu + \mu - \lambda.
\end{equation}
In the case of the Allen-Cahn equation, the transition between different types of instability is then obtained by finding $(\mu,c)$ values for which the branch point $(\lambda_\mathrm{br},\nu_\mathrm{br})$ of \eqref{e:ldsp} is marginally stable. That is, a $(\lambda,\nu)$-pair which solves
$$
0 = d(\lambda,\nu,c),\; 0 = d_\nu(\lambda,\nu,c),
$$
and satisfies $\mathrm{Re}\, \lambda_\mathrm{br} = 0$. Calculation gives 
$$
\lambda_\mathrm{br} = -c^2/4 + \mu ,\; \nu_\mathrm{br} = -c/2,
$$
so that the boundary between instabilities is given by the curve 
$$
\mu_c := c^2/4.
$$

Returning to the inhomogeneous system, and posing the time-dependent equation in the co-moving frame, one expects perturbations of the trivial state located near $\xi = 0$ to grow but be convected leftwards until reaching a $\xi$ value where $\mu(\xi)\geq\mu_c$. Here, they will also grow pointwise until being saturated at the level $u = \sqrt{\mu}$ through the nonlinear term. Thus, we define $\xi_c$ to be the value such that $\mu(\xi_c) = c^2/4.$

Further analyzing the numerical results depicted in Figure \ref{f:profs}, we find the slow-variation of the parameter ramp induces a secondary delay of instability and in the growth of the front, so that the front location, which we denote as $\xi_\mathrm{fr}$, is less than $\xi_c$ and the corresponding $\mu$-value, which we denote as $\mu_\mathrm{fr},$ is larger than $\mu_c.$ The numerics indicate the $\mu$-delay of the front interface varies like
$$
\mu_\mathrm{fr} - \mu_c \sim \epsilon^{2/3},
$$
consistent with our theoretical results below. Since $\mu \approx - \epsilon \xi$ for $\mu$ near 0, one would then expect the spatial delay to go like
$$
\xi_c - \xi_\mathrm{fr} \sim \epsilon^{-1/3},
$$
leaving a large plateau region where the front lies close to the now absolutely unstable trivial state. We discuss the implications of this delay on the stability of this front in Section \ref{ss:stab}.

For $c>2$, the trivial state is absolutely stable for all $\mu\leq 1$, hence small perturbations of the trivial state will be convected to negative infinity, and hence no front solution with this speed will exist.  In the original PDE, we expect such perturbations to grow and spread through the domain with asymptotic speed 2. It is of interest how the slowly varying quench alters the convergence of the front speed to this asymptotic rate. We briefly discuss this in Section \ref{s:homq}.

\subsection{Main existence result for dynamic fronts $c\in (0,2)$ \label{ss:thm1}}
As discussed above, we seek traveling wave solutions to the system of ODEs \eqref{e:tw0a}-\eqref{e:tw0b}
for $\mathcal{O}(1)$ values of $c \in (0,2)$.  
To simplify the setting, we reverse the spatial direction and consider solutions in $\zeta := -\xi$. We obtain the following traveling wave equation with asymptotic boundary conditions
\begin{align}
0=& u_{\zeta\zeta} - cu_\zeta + \mu u - u^3,\label{e:tw1a}\\
\mu_\zeta =& \epsilon (1-\mu^2),\label{e:tw1b}\\
\lim_{\zeta\rightarrow-\infty} u(\zeta) &= 0 
\hskip0.15truein {\rm and}
\hskip0.15truein 
\lim_{\zeta\rightarrow+\infty} (u(\zeta) - 1)  = 0.
\label{e:tw1c}
\end{align}
Note that now $\mu$ increases from $-1$ to $1$ as $\zeta$ increases. Further, we remark that all figures below depicting various aspects of the phase-portrait have direction of time governed by $\zeta$.
The desired solutions of this system 
are heteroclinic orbits between the equilibria 
$(u_0,v_0,\mu_-) = (0,0,-1)$ 
and 
$(u_+,v_+,\mu_+) = (1,0,1)$ 
in the following first-order system:
\begin{align}
u_\zeta &= v\label{e:tw2a} \\
v_\zeta &= cv - \mu u + u^3\label{e:tw2b}\\
\mu_\zeta &= \epsilon(1 - \mu^2).\label{e:tw2c}
\end{align}

These heteroclinic orbits will be found in the intersection of the unstable manifold, $W^\mathrm{u}(0,0,-1)$, of the former equilibrium and the  stable manifold, $W^\mathrm{s}(1,0,1)$, of the latter equilibrium. Since both of these are two-dimensional and lie in a three-dimensional ambient phase space, we expect a one-dimensional intersection and hence a locally isolated heteroclinic trajectory for each $\epsilon$ small.  Our result establishes the existence of such fronts and locates where their interface, or take off from the origin, is located. As observed in Figure \ref{f:profs}, the front has a fast jump from the trivial state up to local value of $\sqrt{\mu}$ when $\mu$ is near $\mu_c = c^2/4.$ Hence, to account for the $c$- and $\epsilon$-dependence of the front, we define the $\mu$-location of the front interface as
\begin{equation}\label{e:mfr}
\mu_\mathrm{fr} = \inf \{\mu\,:\, u>\sqrt{\mu_c}/2 \}.
\end{equation}
Since $\mu$ is one-to-one, we can then define $\zeta_\mathrm{fr}$ so that $\mu(\zeta_\mathrm{fr}) = \mu_\mathrm{fr}.$
Our main result is stated below.  See Figure \ref{f:uvpp} for a schematic of the phase portrait,
with insets in blue depicting local phase portraits for the singular system $\epsilon = 0$ near the origin.

\begin{Theorem}\label{t:0}
There exists an $\eps_0>0$ sufficiently small such that,
  for $0< \eps \le  \eps_0$ and for any $\mathcal{O}(1)$ value of
  $c \in (0,2)$, system \eqref{e:tw2a}-\eqref{e:tw2c} has a heteroclinic orbit
  $\Gamma_\eps$ which lies in the transverse intersection of
  $W^u(0,0,-1)$ and $W^s(1,0,1)$. Furthermore, $\Gamma_\eps$ is
  monotone increasing in $u$, and there exists a small
  $\tilde{\delta} > 0$ independent of $\eps$ such that $\Gamma_\eps$ is
  close to $(u,v)=(0,0)$ for $\mu \in [-1,\frac{c^2}{4} -\tilde{\delta})$
  and $\Gamma_\eps$ is close to $(u,v)=(\sqrt{\mu},0)$ for
  $\mu \in (\frac{c^2}{4} +\tilde{\delta}, 1]$.  The front location
  is given by
  \begin{equation}\label{e:mufr}
  \mu_{\rm fr}=\frac{c^2}{4}+\Omega_0\left(1-\frac{c^4}{16}\right)^{\frac{2}{3}}\epsilon^{\frac{2}{3}}
  +\mathcal{O}(\eps \ln(\eps)).
  \end{equation}
  Here, $\Omega_0$ is the smallest positive zero of the following
  combination of Bessel functions of the first kind,
  $J_{-1/3}(2 z^{3/2}/3) + J_{1/3}(2 z^{3/2}/3).$
\end{Theorem}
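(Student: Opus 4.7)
The plan is to construct $\Gamma_\epsilon$ as a transverse intersection of $W^u(0,0,-1)$ and $W^s(1,0,1)$ by tracking these two-dimensional invariant manifolds through a singular skeleton and resolving the loss of normal hyperbolicity at $\mu = c^2/4$ via a geometric blow-up. Viewing \eqref{e:tw2a}-\eqref{e:tw2c} as a slow-fast system with $\mu$ slow, at $\epsilon = 0$ the critical set consists of the trivial plane $\mathcal{M}_0 = \{u = v = 0\}$ and the non-trivial branches $\mathcal{M}_\pm = \{v = 0,\ u = \pm\sqrt{\mu}\}$ for $\mu \geq 0$. The candidate singular heteroclinic consists of (i) a slow segment on $\mathcal{M}_0$ from $\mu = -1$ up to a take-off $\mu_{\rm fr}$, (ii) a fast layer heteroclinic from $u = 0$ to $u = \sqrt{\mu_{\rm fr}}$ at that $\mu$, and (iii) a slow segment on $\mathcal{M}_+$ from $\mu_{\rm fr}$ to $(1,0,1)$. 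A dimension count $\dim W^u(0,0,-1) + \dim W^s(1,0,1) - 3 = 1$ shows that a transverse intersection yields the desired isolated heteroclinic.

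To follow $W^u(0,0,-1)$ past the pitchfork at $\mu = 0$ and up to the eigenvalue collision at $\mu = c^2/4$, I desingularise the invariant line $\{u = v = 0\}$ via projective coordinates $s = v/u$:
\begin{equation*}
  u_\zeta = u s, \qquad s_\zeta = c s - \mu - s^2 + u^2, \qquad \mu_\zeta = \epsilon(1 - \mu^2).
\end{equation*}
On the invariant plane $\{u = 0\}$, the critical curve $s^2 - c s + \mu = 0$ parametrises the two eigenvalues $\lambda_\pm(\mu) = (c \pm \sqrt{c^2 - 4\mu})/2$; the branch $s = \lambda_+$ is attracting in $s$, the branch $s = \lambda_-$ is repelling, and the two collide in a regular fold at $(s, \mu) = (c/2, c^2/4)$. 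The saddle $(0, \lambda_+(-1), -1)$ in the blown-up phase space lifts the equilibrium $(0,0,-1)$ along its $(u,v)$-unstable eigendirection, and $W^u(0,0,-1)$ lifts to its two-dimensional unstable manifold. Fenichel's theorem produces an attracting slow manifold $\mathcal{S}_\epsilon = \{s = \lambda_+(\mu) + O(\epsilon)\} \cap \{u = 0\}$ for $\mu$ in any compact subinterval of $(-1, c^2/4)$, and a standard tracking argument shows the blown-up $W^u(0,0,-1)$ is exponentially close in $s$ to $\mathcal{S}_\epsilon$ upon entering a fixed neighbourhood of the fold.

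The core of the proof is the slow passage through the fold. I apply the blow-up
\begin{equation*}
  s = \tfrac{c}{2} + \epsilon^{1/3} S, \qquad \mu = \tfrac{c^2}{4} + \epsilon^{2/3} M, \qquad Z = \epsilon^{1/3}(\zeta - \zeta_*),
\end{equation*}
under which the $(s,\mu)$-dynamics reduce at leading order to the canonical fold system $S_Z = -(S^2 + M)$, $M_Z = \alpha$ with $\alpha := 1 - c^4/16 > 0$. The Riccati substitution $S = \phi_Z/\phi$ linearises the $S$-equation to $\phi_{ZZ} + \alpha(Z - Z_0)\phi = 0$, and the further rescaling $\tilde\tau = \alpha^{1/3}(Z - Z_0)$ brings it to the Airy-type equation $\phi_{\tilde\tau\tilde\tau} + \tilde\tau\phi = 0$, whose general solution is a linear combination of $\sqrt{\tilde\tau}\,J_{\pm 1/3}(\tfrac{2}{3}\tilde\tau^{3/2})$ for $\tilde\tau > 0$. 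The matching condition as $\tilde\tau \to -\infty$ — namely $S \to \alpha^{1/3}\sqrt{-\tilde\tau}$, inherited from $s \to \lambda_+(\mu)$ along $\mathcal{S}_\epsilon$ — forces (up to scale) the Airy-type solution that decays as $\tilde\tau \to -\infty$, and its analytic continuation to $\tilde\tau > 0$ takes the form $\phi(\tilde\tau) \propto \sqrt{\tilde\tau}\,\bigl[J_{-1/3}(\tfrac{2}{3}\tilde\tau^{3/2}) + J_{1/3}(\tfrac{2}{3}\tilde\tau^{3/2})\bigr]$. The trajectory exits the blow-up chart when $S = \phi_Z/\phi$ first diverges, i.e., at the first positive zero $\tilde\tau = \Omega_0$ of $\phi$. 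Translating back yields the leading-order formula $\mu_{\rm fr} = c^2/4 + \Omega_0\, \alpha^{2/3}\epsilon^{2/3}$, with the $O(\epsilon\ln\epsilon)$ remainder produced by higher-order corrections in the entry and exit charts together with a logarithmic contribution from matching the size of $u$ across the fold.

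Downstream of the fold, the blow-down produces a trajectory that leaves a neighbourhood of $\mathcal{M}_0$ and is captured by the fast layer heteroclinic of $u_{\zeta\zeta} - c u_\zeta + \mu u - u^3 = 0$ from $u = 0$ to $u = \sqrt\mu$ at $\mu \approx \mu_{\rm fr}$, placing $\Gamma_\epsilon$ close to $\mathcal{M}_+$. The latter is normally hyperbolic of saddle type for $\mu \in (0,1]$; Fenichel's theorem supplies a perturbed slow manifold $\mathcal{M}_+^\epsilon$ together with its stable foliation, and the Exchange Lemma shows that the tracked image of $W^u(0,0,-1)$ intersects $W^s(\mathcal{M}_+^\epsilon)$ transversally and is carried along $\mathcal{M}_+^\epsilon$ into $(1,0,1) \in W^s(1,0,1)$. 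Transversality of $W^u(0,0,-1) \cap W^s(1,0,1)$ is inherited from the non-degenerate unfolding of the fold. Monotonicity of $u$ along $\Gamma_\epsilon$ is a consequence of the construction: $\lambda_+(\mu) > 0$ on $\mathcal{S}_\epsilon$ gives $v = u s > 0$ throughout the slow passage, and the subsequent layer and slow segments preserve this sign. The main technical obstacle is the fold matching itself: pinning down the precise Bessel combination with first positive zero $\Omega_0$ and controlling the logarithmic correction in $\mu_{\rm fr}$ requires careful asymptotic analysis using large-$\tilde\tau$ expansions of $J_{\pm 1/3}$ together with delicate invariant-manifold tracking through the entry, fold, and exit charts.
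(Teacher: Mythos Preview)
Your proposal is essentially correct and follows the same strategy as the paper: desingularise the line $\{u=v=0\}$ via the projective coordinate $s=v/u$, identify the fold of strong/weak eigenspaces at $(s,\mu)=(c/2,c^2/4)$ on the invariant plane $\{u=0\}$, and resolve the slow passage through this fold to obtain the $\Omega_0(1-c^4/16)^{2/3}\eps^{2/3}$ delay. Where you re-derive the Riccati/Airy normal form and the Bessel combination by hand, the paper simply puts the $(z,\theta)$-subsystem into Krupa--Szmolyan normal form and quotes their Theorem~2.1; your explicit derivation is of course what underlies that result.

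The one place your outline diverges from the paper is the final matching. You phrase it as ``layer heteroclinic from $u=0$ to $u=\sqrt{\mu}$ at $\mu\approx\mu_{\rm fr}$, then Exchange Lemma along $\mathcal{M}_+^\eps$.'' The paper instead stays in the projective coordinates, straightens the strong-unstable fibres over the invariant plane $\{u=0\}$, and constructs the intersection directly in a transverse section $\Sigma_\delta=\{z=-\delta\}$ just past the fold: there $W^u(0,0,-1)$ is vertical over the exit point $\theta_a(\eps)$ of the slow manifold, while $W^s(1,0,1)$ is shown (via a short trapping-region argument for the $\eps=0$ stable manifolds $\tilde W^s(z_*,u_*,\theta)$) to meet $\Sigma_\delta$ as a graph over $\theta$ with $u=\mathcal{O}(\theta+\eps)$. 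The two graphs then match at $\theta=\theta_a(\eps)$. Your Exchange-Lemma phrasing is slightly misplaced, since the orbit does not exit a neighbourhood of $\mathcal{M}_+^\eps$ but terminates at the equilibrium $(1,0,1)$; what is actually needed is just the transverse intersection of the tracked $W^u$ with the stable foliation $W^s(\mathcal{M}_+^\eps)=W^s(1,0,1)$, which is exactly what the paper's section-matching delivers. Either route works, but the paper's is more direct and avoids having to separately verify the Exchange-Lemma transversality hypothesis.
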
\label{r:zetafr}

This theorem establishes the main result about quenched fronts for all
$\mathcal{O}(1)$ values of $c \in (0,2)$, 
showing that the monotone invasion fronts
 have interfaces located at $\mu = c^2/4$ to leading order, and not at $\mu=0$,
{\it i.e.}, not where the instantaneous pitchfork bifurcation occurs in
which $u=0$ becomes an unstable solution of the PDE. 
We observe that $\mu=c^2/4$ is where the 
unstable node, which is created at $\mu=0$ in the instantaneous
pitchfork bifurcation, becomes an unstable improper node,
on its way to transitioning to being an unstable spiral.
Hence, there is a substantial delay 
in the loss of stability of the $u=0$ in the PDE. The leading-order term gives an $\mathcal{O}(1)$ delay in $\mu$ which corresponds to an $\mathcal{O}(\epsilon^{-1})$ delay in $\zeta$. The next order term gives a further delay, where $\mu>\mu_c$ and the system is absolutely unstable, which is $\mathcal{O}(\epsilon^{2/3})$ in $\mu$ and thus $\mathcal{O}(\epsilon^{-1/3})$ in $\zeta$.
Moreover, in the
proof of the theorem, 
we use a projectivized coordinate
to track smoothly through $\mu=0$ and all the way up through $\mu=c^2/4$.
It turns out 
that there is a further delay in the loss of stability
({\it i.e.}, in $\mu_{\rm fr}$)
beyond $c^2/4$, 
which is of $\mathcal{O}(\eps^{2/3})$ duration, and 
this arises due to a slow-passage through a fold bifurcation 
in the projectivized system. 

\begin{figure}[h!]
\centering
\includegraphics[trim = 0.0cm 0.05cm 0.05cm 0.05cm,clip,width=0.7\textwidth]{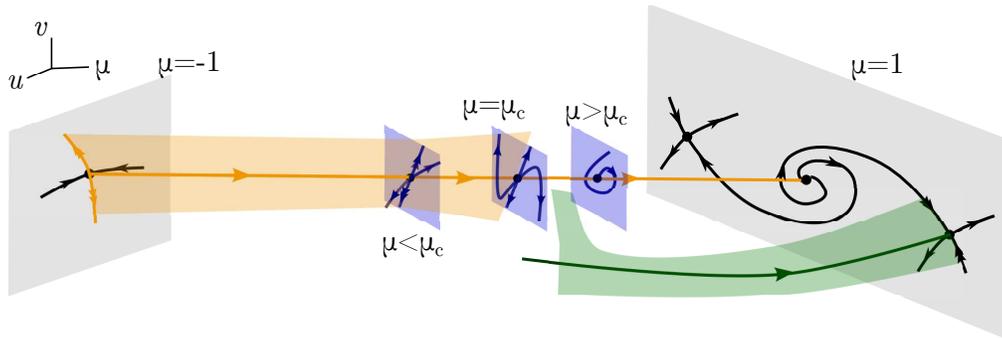}\hspace{-0.2in}
\caption{Schematic phase portrait for system \eqref{e:tw2a}-\eqref{e:tw2c} for $0<\epsilon\ll1$. Invariant planes $\{\mu = \pm 1\}$ depicted in grey, unstable manifold $W^\mathrm{u}(0,0,-1)$ in orange, stable manifold $W^\mathrm{s}(1,0,1)$ in green. Overlayed are $\mu = \text{constant}$ planes depicted in blue which are invariant for $\epsilon = 0$. }\label{f:uvpp}
\end{figure}

We use geometric singular perturbation theory to construct these heteroclinic orbits for each $\mathcal{O}(1)$ value of $c \in (0,2)$ in the singular limit $0<\epsilon \ll1$, first constructing the relevant manifolds for $\epsilon = 0$, where each plane $\{\mu = \text{constant}\}$ is invariant under the flow of \eqref{e:tw2a}-\eqref{e:tw2c}. We use a projective blow-up near the line $\{(0,0,\mu)\,;\, \mu\in[-1,1]\}$ to track the manifolds $W^\mathrm{u}(0,0,-1)$ and $W^\mathrm{s}(1,0,1)$ to a neighborhood of the point $(0,0,\mu_c)$ where an intersection can be constructed.  We use the projective coordinate $z = v/u,$ in combination with $u$ to track the evolution of linear subspaces near the origin as $\mu$ slowly varies. The eigenspaces of the $\epsilon = 0$ linear system are equilibria in the projective dynamics and collide in a fold bifurcation at $\mu_c$. For larger $\mu$, the corresponding eigenspaces become complex and hence the projective dynamics become oscillatory. This winding allows for subspaces to traverse more of the phase space, increasing the likelihood of an intersection. For $0<\epsilon\ll1$ these curves of equilibria perturb to normally hyperbolic invariant slow manifolds, with one-dimensional strong unstable fibers outside a neighborhood of $\mu_c$. To get around the loss of normal hyperbolicity near $\mu_c$, we use blow-up techniques to track the attracting slow-manifold and its unstable fibers around the fold where it can intersect $W^\mathrm{s}(1,0,1)$.

\subsection{Fronts created by a stationary quench ($c=0$): Phenomena and numerics}\label{ss:hm}

A stationary quench
is modeled by the PDE \eqref{e:ac}
with $c=0$.
Physically, the state $u=0$ is linearly unstable on the negative half of the domain 
and stable on the positive half. 
For small non-negative initial data, a stationary front forms, and its
profile is governed by the following spatial ODE:
\begin{equation}
\label{e:c0a-secondorder}
u_{\xi\xi} = 
-\mu u + u^3, \qquad
\mu_{\xi} = -\eps(1-\mu^2), \qquad \mu(0)=0,
\end{equation}
where $\xi=x-ct$ reduces to 
$\xi=x$. The front interface 
is controlled by the slow spatial ramping
through the pitchfork bifurcation,
which occurs at $\xi=0$, where $\mu = 0$. 
Indeed, 
in the three-dimensional $(u,v=u_\xi,\mu)$ phase space,
system \eqref{e:c0a-secondorder}
with $\eps=0$
has a pair of saddles at $(\pm\sqrt{\mu},0,\mu)$
and a center at $(0,0,\mu)$
for each $\mu>0$,
and these merge in a pitchfork bifurcation at $\mu=0$,
so that there is only a saddle fixed point 
at the origin for each $\mu<0$.
Then, for $\epsilon>0$ and small,
solution profiles of \eqref{e:c0a-secondorder}
are depicted in Figure~\ref{f:c0}.
The solutions 
lie near the curve $u=\sqrt{\mu}$ 
for large negative $\xi$, 
and near $u=0$
for large positive $\xi$.
In between, in a neighborhood of $\xi = 0$, 
the solutions slowly drop below $\sqrt{\mu}$ 
but then quickly rise above it,
with the exponentially decaying tail of the front being
located slightly ahead of the instantaneous bifurcation point $\mu=0$. 
Hence, the front interface appears to lie ahead of $\mu = 0$. 
From a PDE perspective, 
this advance of the front 
is caused by the lack of a drift term 
so that diffusion connects the front through a decaying tail across $\xi = 0$. 

\begin{figure}[h!]
\centering
\hspace{-0.2in}
\includegraphics[trim = 1.5cm 0.0cm 1.5cm 0.05cm,clip,width=1\textwidth]{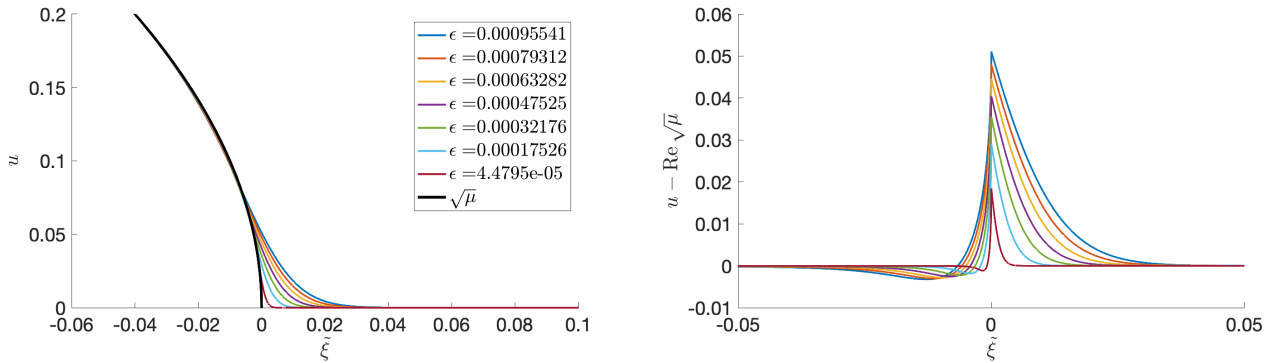}\hspace{-0.2in}
\caption{Left: the solutions of \eqref{e:c0a-secondorder},
i.e., the system with $c = 0$, for a range of small $\epsilon$ values, 
compared with $\sqrt{\mu}$ and zoomed in near $\xi = 0$; 
Right: the curves $u - \mathrm{Re}\, \sqrt{\mu}$ 
for the same range of $\epsilon$ values as in the left plot. 
Here $\tilde{\xi}= \eps\xi$.
}\label{f:c0}
\end{figure}

It turns out that the second Painlev\'{e} equation \cite{baik08,deift95,DLMF}
lies at the heart of system \eqref{e:c0a-secondorder}. 
This may be seen informally
by deriving the leading order asymptotics
for $0<\eps\ll 1$ as follows. 
Substitute the closed form expression 
$\mu(\xi)=-\tanh (\eps \xi)$ 
into \eqref{e:c0a-secondorder} to find
$u_{\xi\xi} = \tanh(\eps\xi) u + u^3.$
Next, scale $\eta=\eps^{1/3}\xi$ and
$u=\sqrt{2}\eps^{1/3}{\tilde u}$,
which corresponds to the significant degeneration
of the equation in the neighborhood of $\xi=0$ and $u=0$
where the instantaneous pitchfork bifurcation occurs.
The equation becomes
${\tilde u}_{\eta\eta} 
= \eps^{-2/3} \tanh(\eps^{2/3}\eta) {\tilde u} + 2 {\tilde u}^3,$
where the factor of $\sqrt{2}$ in the scaling of $u$
has put the coefficient on the cubic term into standard form.
Finally, Taylor expanding,
one obtains
\beq\label{e:pert-of-PII-intro}
{\tilde u}_{\eta\eta} 
= (\eta + \mathcal{O}(\eps^{4/3}\eta^3)) {\tilde u} + 2 {\tilde u}^3.
\eeq
Therefore,
we see that,
for any finite interval of values of $\eta$,
the parameter $\eps$ can be taken to be small enough so that
the equation is a perturbation 
of the second Painlev\'{e} equation (${\rm P}_{\rm II}$),
\begin{equation}
\label{e:PII-intro}
w_{\eta\eta} = \eta w + 2 w^3.
\end{equation}

The key solution of interest here is the Hastings-McLeod solution, $w_{\rm HM}(\eta)$ \cite{hastings80}, which is the unique positive, monotone solution of \eqref{e:PII-intro} which decays as $\eta\rightarrow+\infty$ and satisfies $w_{\rm HM}(\eta) \sim \sqrt{-\eta/2}$ as $\eta\rightarrow-\infty$. In more detail, it has the following asymptotics
\begin{align}
w_{\rm HM}(\eta) &\sim \mathrm{Ai}(\eta) \quad {\rm as} \ \eta \to \infty, \label{e:hma} \\
w_{\rm HM}(\eta) &\sim \sqrt{-\eta/2} \quad {\rm as} \ \eta \to -\infty, \label{e:hmb}\\
\frac{dw_{\rm HM}}{dx}(\eta) &< 0 \hskip0.2truein {\rm for} \hskip0.1truein {\rm all} 
\hskip0.1truein \eta.\label{e:hmc}
\end{align}
Here, $\mathrm{Ai}(\eta)$ denotes the Airy function. We note that all solutions of \eqref{e:PII-intro} which decay to zero as $\eta\rightarrow+\infty$ satisfy $w(\eta)\sim k\mathrm{Ai}(\eta)$ as $\eta\rightarrow+\infty$ for some $k\in \R$.  Parameterizing this family, $w_k(\eta)$, by $k\in\R$, we note that $w_{\rm HM} = w_1(\eta)$ partitions this family into two distinct classes. For $|k|>1$, the solution  $w_k(\eta)$ decays in oscillatory fashion as $\eta\rightarrow-\infty$. For $|k|<1$, the solution $w_k(\eta)$ has a pole at some finite point $\eta = c_0(k)<0$. That is $w_k(\eta) \sim \mathrm{sign}(k)/(\eta-c_0(k))$ as $\eta \to c_0(k)^+$, where we note that $c_0(k) \to -\infty$ as $\vert k \vert \to 1^+$. Proofs of these results can be found in \cite{hastings80}; see also Chapter 32 of the Digital Library of Mathematical Functions \cite[\S 32.11(ii)]{DLMF}, as well as \cite{clarkson2003painleve,Cleri_2020}. Also note, by symmetry, the solution with $k=-1$ is the other separatrix, with asymptotics $w_{-1}(\eta) \sim -\sqrt{-\eta/2}$ as $\eta\to -\infty$.

We note the solution $w_{HM}$ perturbs to a solution $\tilde{u}_{\rm HM}(\eta)$ of \eqref{e:pert-of-PII-intro}, which is the unique one satisfying the same asymptotic boundary conditions \eqref{e:hma} - \eqref{e:hmc}. Translating back to the original variables, we define 
\begin{align}
u_{HM}(\xi) = \sqrt{2}\epsilon^{1/3} w_{HM}(\epsilon^{1/3} \xi),
\end{align}
 which, for each $\epsilon$ sufficiently small,  formally gives the front of \eqref{e:c0a-secondorder} to leading order on any finite interval about $\xi = 0$. The numerically obtained solutions of the full system are compared to this rescaled Hastings-McLeod solution in Figure~\ref{f:c0p}. 

\begin{figure}[h!]
\centering
\hspace{-0.2in}
\includegraphics[trim = 0.0cm 0.0cm 0.0cm 0.0cm,clip,width=0.35\textwidth]{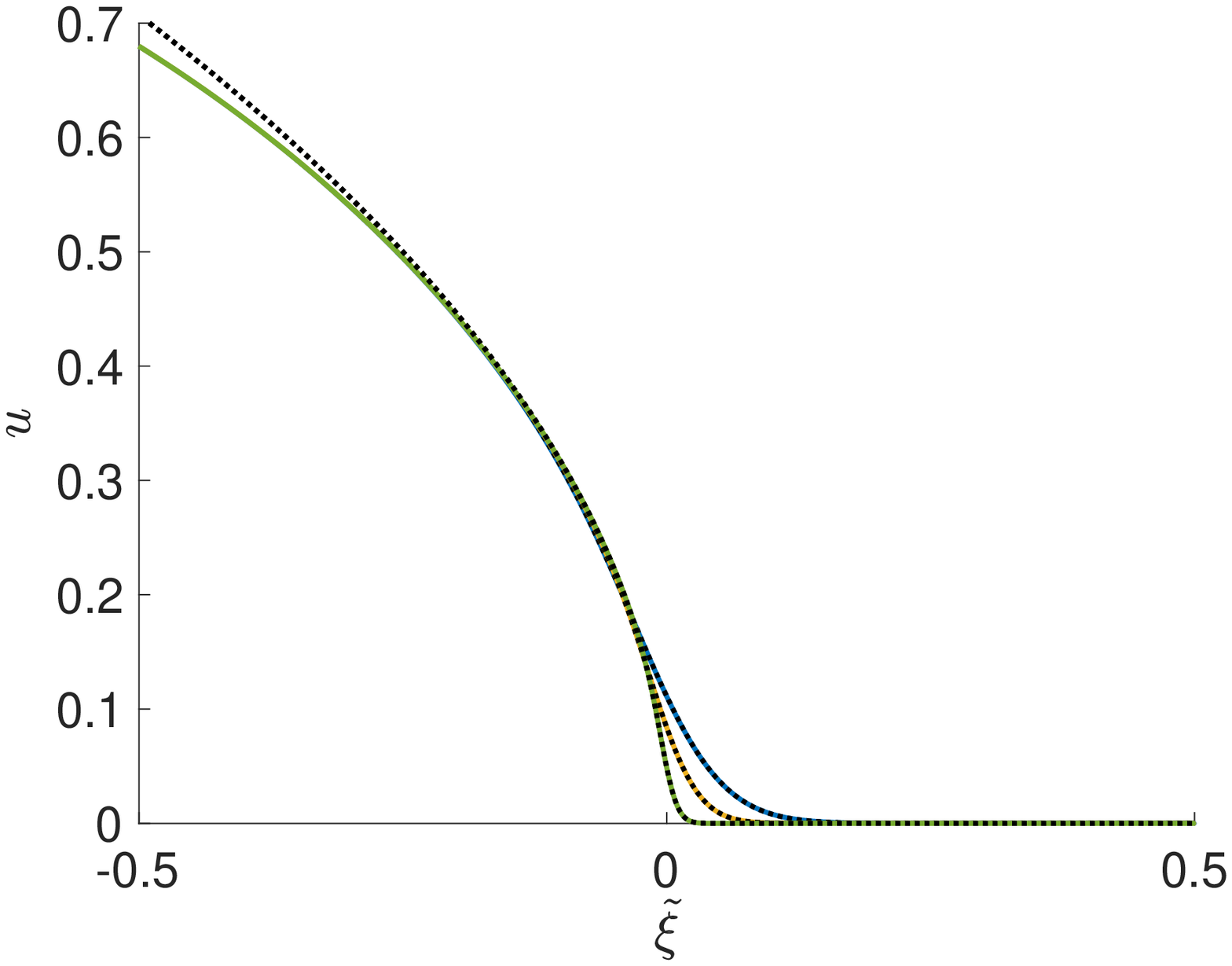}\hspace{-0.1in}\quad
\includegraphics[trim = 0.0cm 0.0cm 0.0cm 0.0cm,clip,width=0.35\textwidth]{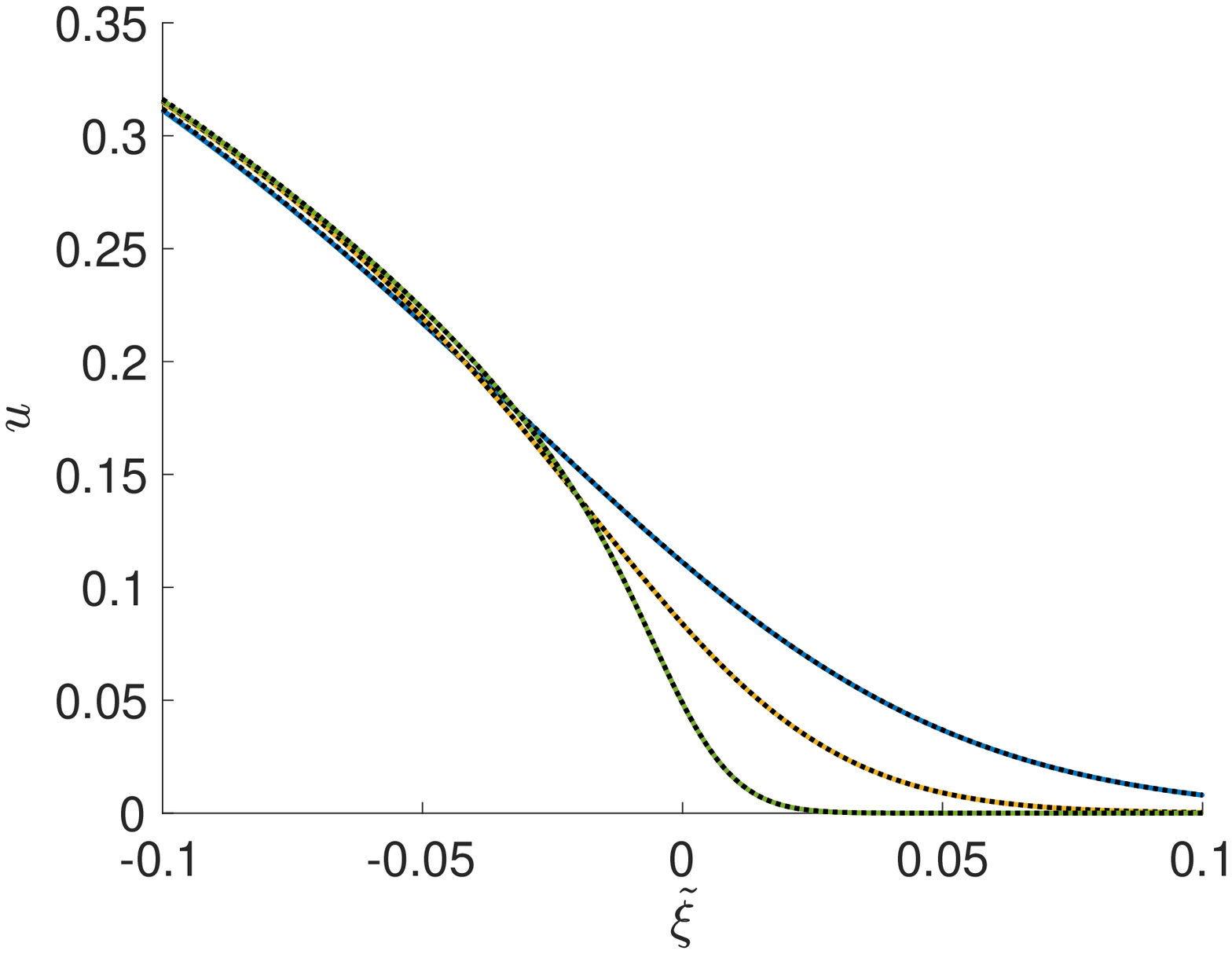}\hspace{-0.2in}\quad
\caption{Left: front solutions as functions of $\tilde \xi = \epsilon \xi$ 
from numerical continuation 
for  $\epsilon =9.81*10^{-3}$ (blue solid),
$\epsilon = 4.21*10^{-3}$(yellow solid), and $\epsilon = 8.27*10^{-4}$(green solid)  
with the rescaled connecting solution $u_{HM}$ of \eqref{e:PII-intro}  (black dashed). 
The blue, yellow, and green curves lie on top of each other for much of this plot. 
Right: zoom in of same solution profiles showing good agreement with the prediction 
$u_{HM}$. 
The numerical solution of \eqref{e:PII-intro} 
was obtained using the Matlab Chebfun package \cite{Driscoll2014}. }\label{f:c0p}
\end{figure}


\subsection{Existence result for stationary fronts with {\bf $c=0$} \label{ss:thm2}}

With the above intuition in mind, 
we state the main result for $c=0$.
The equation 
\eqref{e:c0a-secondorder}
may be written as a third-order autonomous system,
\begin{align}
\label{e:c0a-intro}
u_{\xi} &= v, \\
\label{e:c0b-intro}
v_{\xi} &= -\mu u + u^3, \\
\label{e:c0c-intro}
\mu_{\xi} &= -\eps(1-\mu^2), \qquad \mu(0)=0.
\end{align}
The front of \eqref{e:c0a-intro}-\eqref{e:c0c-intro}
is a heteroclinic orbit connecting the fixed points
$(u_+,v_+,\mu_+)=(1,0,1)$ to $(u_0,v_0,\mu_-)=(0,0,-1)$, 
and it lies in the transverse intersection
of the unstable and stable manifolds
of these fixed points, respectively.

\begin{Theorem}\label{t:2}
For each $\eps>0$ sufficiently small, 
system \eqref{e:c0a-intro}-\eqref{e:c0c-intro} 
has a heteroclinic orbit $(u^*,v^*,\mu^*)$ in the transverse intersection of $W^u(1,0,1)$ and $W^s(0,0,-1)$. Furthermore, there exists a small $\rho>0$, independent of $\epsilon$, such that the front satisfies
\beq\label{e:uest}
u^*(\xi) = u_{HM}(\xi) + \mathcal{O}(\epsilon^{2/3}),\quad \text{for all $|\xi|\leq \rho \epsilon^{-1/3}.$ }
\eeq
where $u_{HM}(\xi) = \sqrt{2}\epsilon^{1/3} w_{HM}(\epsilon^{1/3}\xi)$ and $w_{HM}$ is the unique Hastings-McLeod solution of \eqref{e:PII-intro}.

\end{Theorem}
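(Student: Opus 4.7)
The strategy is to combine geometric singular perturbation theory with a quasihomogeneous blow-up at the pitchfork point $(u,v,\mu,\epsilon) = (0,0,0,0)$, so that the connection problem in the non-hyperbolic region reduces to a transverse intersection problem for the Painlev\'{e} II equation, for which the Hastings--McLeod solution $w_{\rm HM}$ provides the singular separatrix. On the outer region, I first apply Fenichel theory: fix a small $\delta > 0$. At $\epsilon = 0$, the critical branches relevant to the heteroclinic are $\mathcal{M}_+^0 = \{u = \sqrt{\mu},\, v = 0\}$ on $\mu \geq \delta$ (normal eigenvalues $\pm\sqrt{2\mu}$) and $\mathcal{M}_0^0 = \{u = 0,\, v = 0\}$ on $\mu \leq -\delta$ (normal eigenvalues $\pm\sqrt{-\mu}$). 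Both are normally hyperbolic of saddle type, so Fenichel's theorem yields locally invariant perturbed slow manifolds $\mathcal{M}_+^\epsilon, \mathcal{M}_0^\epsilon$ together with their $2$D stable and unstable manifolds. Since $(1,0,1)$ is the $\mu = 1$ endpoint of $\mathcal{M}_+^\epsilon$, we have $W^u(1,0,1) \subset W^u(\mathcal{M}_+^\epsilon)$; using an exchange-lemma argument, $W^u(1,0,1)$ reaches the section $\{\mu = \delta\}$ $\mathcal{O}(\epsilon)$-close to the curve $(u,v) = (\sqrt{\mu},0)$. Symmetrically, $W^s(0,0,-1) \subset W^s(\mathcal{M}_0^\epsilon)$ is $\mathcal{O}(\epsilon)$-close to $(u,v) = (0,0)$ at $\mu = -\delta$.

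For the non-hyperbolic neighborhood $|\mu|<\delta$, I perform the quasihomogeneous blow-up
\begin{equation*}
u = r\bar u,\quad v = r^2\bar v,\quad \mu = r^2\bar\mu,\quad \epsilon = r^3\bar\epsilon,\qquad r\ge 0,\ (\bar u,\bar v,\bar\mu,\bar\epsilon)\in S^3,
\end{equation*}
and work in three charts: an exit chart $K_+ = \{\bar\mu = 1\}$, a rescaling chart $K = \{\bar\epsilon = 1\}$, and an entry chart $K_- = \{\bar\mu = -1\}$. In the rescaling chart $r = \epsilon^{1/3}$; writing $\bar u = \sqrt{2}\tilde u$, $\bar v = \sqrt{2}\tilde v$, and $\eta = \epsilon^{1/3}\xi$, system \eqref{e:c0a-intro}--\eqref{e:c0c-intro} becomes
\begin{equation*}
\tilde u_\eta = \tilde v,\qquad \tilde v_\eta = -\bar\mu\,\tilde u + 2\tilde u^3,\qquad \bar\mu_\eta = -1 + \mathcal{O}(\epsilon^{4/3}),\quad \bar\mu(0) = 0,
\end{equation*}
which at $\epsilon = 0$ gives $\bar\mu = -\eta$ and the Painlev\'{e} II equation \eqref{e:PII-intro}. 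The exit chart relates $K_+$-orbits approaching the blow-up sphere along $\mathcal{M}_+^\epsilon$ to $K$-orbits with $\eta \to -\infty$; reading off the singular slow invariant $\bar u = \sqrt{\bar\mu}$ yields the matching $\tilde u \sim \sqrt{-\eta/2}$. Analogously, $K_-$ relates orbits in the outer region near $\mathcal{M}_0^\epsilon$ to $K$-orbits with $\eta \to +\infty$, and the blow-down of the $2$D manifold $W^s(\mathcal{M}_0^\epsilon)$ into $K$ is identified with the full one-parameter family $w_k(\eta) \sim k\,\mathrm{Ai}(\eta)$ of solutions of \eqref{e:PII-intro} decaying at $+\infty$.

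The heteroclinic is then selected by the left-asymptotic matching: from the properties of Painlev\'{e} II recalled in Section~\ref{ss:hm}, only $k = 1$, i.e., $w_{\rm HM}$, matches the required asymptotic $\sqrt{-\eta/2}$. Solutions with $|k|<1$ develop a finite-time pole at $\eta = c_0(k)<0$ and cannot exit through $K_+$, while solutions with $|k|>1$ oscillate as $\eta\to-\infty$ and wind around the center in the outer region without meeting $\mathcal{M}_+^\epsilon$. Because $k$ depends smoothly and strictly monotonically on the $2$D family of decaying initial data, the intersection in $K$ is transverse, and hence, after blow-down, the intersection $W^u(1,0,1) \pitchfork W^s(0,0,-1)$ is transverse in the original system and persists for $\epsilon > 0$ small by the implicit function theorem. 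Combining the closeness $\tilde u = w_{\rm HM} + \mathcal{O}(\epsilon^{1/3})$ in $K$ (obtained from Gronwall applied to the $\mathcal{O}(\epsilon^{4/3})$ perturbation together with the linearization about $w_{\rm HM}$) with the blow-up relation $u = \sqrt{2}\epsilon^{1/3}\tilde u$ gives the stated bound $u^*(\xi) = u_{HM}(\xi) + \mathcal{O}(\epsilon^{2/3})$ on $|\xi| \le \rho\epsilon^{-1/3}$. The main technical obstacle is the careful error bookkeeping across the three chart transitions, and in particular ensuring that $W^u(1,0,1)$ enters $K$ precisely at the $w_{\rm HM}$ orbit rather than on a nearby $w_k$ with $|k|\neq 1$; the blow-up analysis, together with the monotone dependence of $k$ on initial data, is what makes this selection rigorous.
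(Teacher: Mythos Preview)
Your proposal follows essentially the same geometric blow-up scheme as the paper (same quasihomogeneous weights, same three-chart structure, Painlev\'e II in the rescaling chart, Hastings--McLeod as singular separatrix), so the overall architecture is correct.  There is, however, one substantive difference and one gap worth flagging.

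\emph{Transversality.}  The paper establishes transversality in the rescaling chart spectrally: it shows that the linearized Painlev\'e II operator $L_0 w = w'' - (\eta + 6w_{\rm HM}^2)w$ has no bounded solution, by proving its ground-state eigenvalue is strictly negative via the identity $L_0(\partial_\eta w_{\rm HM}) = w_{\rm HM}$ and standard Schr\"odinger-operator arguments.  Your argument instead invokes the separatrix property (solutions with $|k|<1$ blow up, $|k|>1$ oscillate) together with an asserted monotone dependence on $k$.  This is a plausible alternate route, but as written it is heuristic: the separatrix property gives uniqueness, not transversality, and you would need to argue rigorously that pole-formation and oscillatory decay are each open conditions in $k$, so that $k=1$ is a genuine crossing rather than a tangency.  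The paper's spectral argument sidesteps this by directly excluding a kernel.

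\emph{Inclination near the blow-up sphere.}  The step you compress into ``an exchange-lemma argument'' and ``careful error bookkeeping'' is in fact the bulk of the paper's work.  The equilibria $p^+$ and $p^0$ on the sphere are only partially hyperbolic, with two-dimensional non-unique center manifolds, and the Fenichel center-unstable manifold $\mathcal{M}^{cu,+}$ carrying $W^u(1,0,1)$ does not a priori coincide with the particular local center-unstable manifold $W_1^{cu}(p^+)$ that contains the Hastings--McLeod orbit.  The paper handles this via Sil'nikov-type normal forms and explicit exponential estimates on the transition maps $\Pi_1,\Pi_3$, showing the two manifolds are exponentially close at the exit sections; only then does the transverse intersection of $W_1^{cu}(p^+)$ and $W_3^{cs}(p^0)$ transfer to $\mathcal{M}^{cu,+}\cap\mathcal{M}^{cs,0}$.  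Your outline is consistent with this, but does not yet supply it.
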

The estimate \eqref{e:uest} implies that the scaled Hastings-McLeod solution gives the leading order inner solution in the region $|\mu|\leq \rho\epsilon^{2/3}$. A comparison of the leading-order inner solution with the numerically-obtained front solutions for two small $\epsilon$ values is given in Figure~\ref{f:c0p}. We add that, to make it easier to compare solutions for different $\epsilon$, the solutions are plotted against the variable $\tilde \xi = \epsilon \xi$, and the leading order asymptotics take the form
\begin{align}\label{e:hm}
u^*(\tilde \xi/\epsilon) 
\sim \epsilon ^{1/3} w_{HM}(\epsilon^{-2/3} \tilde \xi ), 
\qquad |\tilde\xi|\lesssim \epsilon^{2/3}.
\end{align}

This theorem is proven using geometric desingularization, 
or ``blow-up" of system \eqref{e:c0a-intro}-\eqref{e:c0c-intro}, near the point $(u,v,\mu) = 0$, where the critical manifolds for $\epsilon = 0$ lose hyperbolicity. Here this point is blown-up into a 3-sphere whose singular dynamics are controlled by Painlev\'{e}'s second equation at leading order. We use inclination lemmas to track the desired invariant manifolds into a neighborhood of the sphere. We then use exponential trichotomies to lift the transversality of the Hastings-McLeod solution on the singular sphere and track $W^u(1,0,1)$ and $W^s(0,0,-1)$ across the sphere and show they also intersect transversely.

As part of the
analysis here of the fronts created by a stationary quench, we show that
the Hastings-McLeod solution lies in the transverse intersection of
invariant stable and unstable manifolds of \eqref{e:PII-intro}. In the extended
phase of \eqref{e:PII-intro}, these manifolds consist of solutions which satisfy
exponential growth and decay conditions as $\eta  \to \infty$ and $w \to 0$
and of solutions satisfying exponential growth and decay conditions as $\eta
\to -\infty$ and $w \to \sqrt{-\eta/2}$. 
As discussed above, the Hastings-McLeod solution is the
unique solution of the Painlev\'{e} II equation which separates two different
types of solutions. Namely, among all solutions that decay asymptotically
proportionally to an Airy function as $z \to +\infty$, it separates those
which undergo oscillatory decay as $z \to -\infty$ from those which have a
simple pole at some negative value of $z$. These two different classes of
solutions lie on different sides of the transverse intersection of the
stable and unstable manifolds. Moreover, establishing this transverse
intersection for \eqref{e:PII-intro} is also a natural building step for showing
that the stationary front of the PDE \eqref{e:ac} lies in
the transverse intersection of invariant manifolds.

Physically, the Allen-Cahn type PDE studied here may also be
viewed as a prototype system for studying more general problems in which
there is a slowly-varying parameter ramp in space. Such situations arise
for example in Taylor vortex flow when there is a time-independent
parameter ramp which varies slowly in space \cite{riecke1986pattern,rehberg1987forced}. The governing equations are much more complex there, but
experimental results and asymptotic analysis shows that the slowly-varying
spatial ramp can induce the selection of a unique pattern \cite{riecke1986pattern,riecke1987perfect}.

\begin{Remark}\label{r:ks}
The $c=0$ case is an important example of slow passage through a
super-critical pitchfork bifurcation in multiple time scale dynamical
systems, with two fast variables and one slow variable. Indeed, this is a
natural next step in the use of geometric desingularization to analyze
dynamic pitchfork bifurcations, which has previously only been done for
systems with one fast variable and one slow variable, see
\cite{Krupa_2001}. The geometry induced by the two fast variables
requires the tracking of additional hyperbolic directions.
\end{Remark}

\begin{Remark}\label{r:hm}
Earlier analyses of slow passage through pitchfork bifurcations
have involved the case of a generic center equilibrium undergoing a
slow dynamic pitchfork bifurcation 
in which the center becomes a saddle and two new
centers emerge. In Hamiltonian mechanics, this corresponds to a single
well potential slowly changing into a double well. These earlier analyses
\cite{haberman79,maree96} were carried out using singular perturbation theory
and matched asymptotic expansions. In contrast, because the pitchfork
bifurcation encountered here is of the opposite type, with a saddle point
becoming a center and giving birth to two saddles
(and as a result the full Allen-Cahn PDE 
transitions from one stable state to another),
a rigorous analysis is
possible by exploiting the hyperbolicity on both sides of $\mu=0$
and by using geometric
desingularization to study the loss of hyperbolicity 
in a neighborhood of $\mu=0$.
Also, in principle, one could use a complex time variable,
obtain the formal asymptotic results here
from the the earlier works \cite{haberman79, maree96}.
\end{Remark}


\subsection{Outline}
The analysis of PDE \eqref{e:ac} 
in the case of $c \in (0,2)$
and the proof of Theorem \ref{t:0}
are presented in Sections \ref{s:set}-\ref{s:ori}.
In particular, in Section \ref{s:set}, 
we set up our theoretical approach, 
define the projective coordinates,
and describe the singular system with $\epsilon = 0.$ 
In Section \ref{sec:eps.gt.0}, 
we use Fenichel theory and geometric blow-up 
to unfold the dynamics and track the relevant invariant manifolds 
for $0<\epsilon\ll1.$ 
Then, in Section \ref{s:ori}, 
the desired heteroclinic intersection is established
in a neighborhood of the dynamic fold,
hence completing the proof of Theorem \ref{t:0}.
Next, the analysis of PDE \eqref{e:ac} 
in the case of $c=0$
and the proof of Theorem \ref{t:2}
are presented in Sections \ref{s:c0}-\ref{s:incl}.
In Section \ref{s:c0},
we begin the study of stationary fronts in the $c = 0$ case, 
using a geometric blow-up 
of a neighborhood 
of the instantaneous pitchfork bifurcation point.
Then, Section \ref{s:singhet}
establishes that 
the Hastings-McLeod solution of \eqref{e:PII-intro}
exists in the transverse intersection
of invariant manifolds, and then that 
the singular heteroclinic 
representing the stationary front created by the quench
also exists in the transverse intersection
of invariant manifolds of the full system.
The proof of Theorem 2 is completed
in Section \ref{s:incl},
by establishing the inclination properties of invariant manifolds,
and showing that the transverse intersection
exists for all $0< \epsilon \ll 1$.
In Section \ref{s:disc}, 
we complement the proofs of Theorems \ref{t:0} and \ref{t:2}
by giving an argument showing the fronts of Theorem \ref{t:0} 
are nonlinearly asymptotically stable, 
discussing the existence of other, non-monotonic front solutions 
possible in the wake of the quench for $c \in (0,2)$, 
and discussing parameter regimes not covered by our result, 
such as the $c,\epsilon \sim0$ regime. We provide additional numerical results to motivate future studies, as well as discuss other slowly-varying heterogeneities which we expect to induce novel front invasion behavior.

\section{Setup for traveling-waves with $c \in (0,2)$}\label{s:set}
In this section, and in Sections 3-4, we consider $\mathcal{O}(1)$ values of
the speed $c \in (0,2)$. We linearize system \eqref{e:tw2a} - \eqref{e:tw2c} about the equilibria $(u_0,v_0,\mu_-)$ and $(u_+,v_+,\mu_+)$.
The Jacobian at $(u_0,v_0,\mu_-)$ has eigenvalues
$$
\nu_{-,\epsilon} = 2\epsilon,\qquad \nu_{-,\pm}=\frac{c}{2} \pm \sqrt{\frac{c^2}{4}-\mu_-} = \frac{c}{2} \pm \sqrt{\frac{c^2}{4}+1}.
$$
Thus, it is a hyperbolic saddle with two-dimensional unstable manifold $W^\mathrm{u}(0,0,-1)$, whose tangent space is spanned by the vector $(1,\nu_{-,+},0)$ in the $\mu\equiv-1$ plane and by the vector $(0,0,1)^T$ in the direction of the $\mu-$axis. 
Then, the Jacobian 
at $(u_+,v_+,\mu_+)$ has eigenvalues
$$
\nu_{+,\epsilon} = -2\epsilon, \quad \nu_{+,\pm}=\frac{c}{2} \pm \sqrt{\frac{c^2}{4}-\mu_++3u_+^2} =\frac{c}{2} \pm \sqrt{\frac{c^2}{4}+2} .
$$
Thus, it is a hyperbolic saddle, with two-dimensional stable manifold, $W^\mathrm{s}(1,0,1)$, whose tangent space is spanned 
by the vector $(1,\nu_{+,-},0)^T$ in the $\mu\equiv1$ plane and by the vector $(0,0,1)^T$ in the direction of the $\mu-$axis. 
As mentioned above, we wish to locate intersections $ W^\mathrm{u}(0,0,-1)\cap W^\mathrm{s}(1,0,1)$, which consists of a pair of two-dimensional manifolds in three dimensional space, indicating we generically expect a one-dimensional intersection of these manifolds and hence a locally unique trajectory for each $0<\epsilon\ll1$.

\subsection{Projective coordinates/blow-up}
For $\epsilon = 0$, each $\mu = \text{constant}$ plane is invariant with equilibria $(0,0,\mu)$ for all $\mu$ and $(\pm\sqrt{\mu},0,\mu)$ for $\mu\in[0,1]$.  The latter are saddles for all $\mu\in (0,1]$. The former is a hyperbolic saddle for $\mu<0$, degenerate unstable node for $\mu = 0$, unstable node for $\mu\in (0,c^2/4)$. It is a degenerate source for $\mu = c^2/4$ with two-dimensional Jordan block, and is an unstable spiral for $\mu\in(c^2/4,1]$. We remark that the algebraically-double eigenvalue found at $\mu = c^2/4$ is also located using the double-root calculation given in Section \ref{ss:phen} above. In order to unfold the dynamics near $(u,v) = (0,0)$ for $\mu\in[-1, 1]$ and $0<\epsilon\ll1$, we perform a directional blowup in the variables
\beq\label{e:pc}
\tilde z = v/u,\quad u.
\eeq
See \cite{Holzer_2012} for a recent work using a similar approach in a different context.
These coordinates allow one 
to track the manifold $W^\mathrm{u}(0,0,-1)$ 
from $\mu = -1$ through the change in linear stability at $\mu =0$ and through the Jordan block at $\mu = c^2/4$.

In the coordinates \eqref{e:pc}, the system \eqref{e:tw2a}--\eqref{e:tw2c} becomes
\begin{align}
\tilde z_\zeta &= -\tilde z^2 + c\tilde z -(\theta + c^2/4) + u^2,\label{e:tildez-sysa}\\
u_\zeta &= \tilde z u,\label{e:tildez-sysb}\\
\theta_\zeta &= \epsilon (1 - (\theta + c^2/4)^2), \label{e:tildez-sysc}
\end{align}
where we have also set $\theta  := \mu - c^2/4$ to translate the point $\mu = c^2/4$ to the origin.
Here, $\mu_+ = 1$ corresponds to $\theta_+ := 1-c^2/4$ and $\mu_- = -1$ to $\theta_- := -1 -c^2/4.$  

\begin{Remark}
In order to unfold the dynamics in the region near the origin, one generally would blow up the line of equilibria $(0,0,\mu)$ into a cylinder via a polar coordinate blow up $u = r\cos \phi,\,\, v = r \sin \phi.$
 Such a coordinate change, while elucidating the small amplitude dynamics, would push the non-trivial equilibria $(u,v) = (\sqrt{\mu},0)$ away to infinity in the limit $r\to0$, requiring multiple coordinate charts to construct the intersection.  Hence, we instead perform a directional blow-up, projecting the dynamics on different charts of the cylinder using blow-up in both the $u$ and $v$ directions,
$\tilde z = v/u, \, u$ and $ \tilde w = u/v, \, v,$  respectively.
We find that only the first chart is required to construct the monotonic front given in Theorem \ref{t:0}. We also note that both charts, or the aforementioned cylindrical blow-up, would be needed to construct non-monotonic fronts with oscillatory tails. 
See Section \ref{ss:nonmon} and Figure \ref{f:cyl} for more discussion on the non-monotonic fronts.
\end{Remark}

There are several key features  of system \eqref{e:tildez-sysa}-\eqref{e:tildez-sysc}. A central feature is that the plane
\begin{equation}
U_0 = \{ u = 0 \}
\end{equation}
is invariant for all $\epsilon\geq0$.  With $\epsilon = 0$, $\theta$ is a constant, and $U_0$ contains the equilibria of \eqref{e:tildez-sysa} -\eqref{e:tildez-sysc}, which are at
$(\tilde z,u,\theta) = (\tilde z_\pm(\theta), 0,\theta)$ for each $\theta \in [-1-c^2/4,0]$.
Here, $\tilde z_\pm$ satisfies
$$
-\tilde z^2 + c\tilde z -(\theta + c^2/4) =0, 
\quad \mathrm{Re} \, \tilde z_+ \geq c/2.
$$
These equilibria collide 
in a saddle-node bifurcation at $\theta = 0$ (that is $\mu = c^2/4$) 
and $\tilde z = c/2$. 
Also, at $\theta = -c^2/4$ (that is $\mu = 0$)
there is a pitchfork bifurcation
from the point $(\tilde z_-,0,\theta)$ 
in which a branch of equilibria emerges
$$
(\tilde z_*, u_*, \theta) = (0, \sqrt{\theta+c^2/4},\theta),
\qquad \theta\in(-c^2/4,1-c^2/4].
$$
These lie out of the plane $U_0$ and correspond to the non-trivial state $(u,v) = (\sqrt{\mu},0)$. Due to reversibility, there is also a branch of equilibria $(0, -\sqrt{\theta+c^2/4},\theta)$ for the same interval of fixed $\theta$ values, which correspond to the other non-trivial state $(u,v) = (-\sqrt{\mu},0)$ 
that also bifurcates at $\theta = -c^2/4$.

For $\epsilon>0$, 
only the points 
$(\tilde z_\pm,0, \theta_-)$ and $(\tilde z_*,u_*,\theta_+)$ persist as equilibria,
and only the planes $\theta = \theta_\pm$ remain invariant. 
Moreover, on the invariant plane $U_0$, 
the flow of \eqref{e:tildez-sysa} - \eqref{e:tildez-sysc} with $0<\eps\ll 1$
is governed by an algebraic Ricatti-equation,
which tracks the evolution of 1-D subspaces 
of the $(u,v)$-linearized dynamics
and which can be put into the normal form for slow-passage through a fold.

\begin{Remark}
The dynamics on the invariant plane $U_0$ correspond to the dynamics on the blown-up cylinder 
induced by the linear flow, and the reduced $\tilde z,\theta$ system tracks the dynamics of one-dimensional subspaces in the Grassmanian $Gr(2,1)$ under the linearized flow.  Here, when $\epsilon = 0$, equilibria of the projectivized flow, determined by the $\tilde z$-equation, are given by spatial eigenvalues $\nu$  of the $(u,v)$-linearization about the origin determined by the linear dispersion relation \eqref{e:ldsp}.
\end{Remark}

In order to study the dynamics on $U_0$ and those of the full system \eqref{e:tildez-sysa} - \eqref{e:tildez-sysc}, we make one further simplifying step. In particular, we complete the square $\tilde z = z+ c/2$, obtaining
\begin{align}
z_\zeta &= - z^2 -\theta + u^2,\label{e:zut1}\\
u_\zeta &=  (z + c/2) u,\label{e:zut2}\\
\theta_\zeta &= \epsilon (1 - (\theta + c^2/4)^2)\label{e:zut3}.
\end{align}
We shall work with this system in Sections \ref{s:set} - \ref{s:ori} to establish the main results for the heteroclinic orbit $\Gamma_\eps$,
and prove Theorem \ref{t:0}.
In the next subsection, we first study the $\eps=0$ system. Then, in the subsequent subsections, we will analyze the dynamics for $0< \eps \ll 1$ and show that there is a transverse intersection of the unstable manifold of $(z_+,0,\theta_-)$ and the stable manifold of $(z_*,u_*,\theta_+)$, for sufficiently small $\epsilon>0$. The heteroclinic $\Gamma_\eps$ will lie in that intersection,
see Figure \ref{f:eps1} for a depiction.

\begin{figure}[h!]
\centering
\includegraphics[trim = 0.0cm 0.0cm 0.0cm 0.05cm,clip,width=0.9\textwidth]{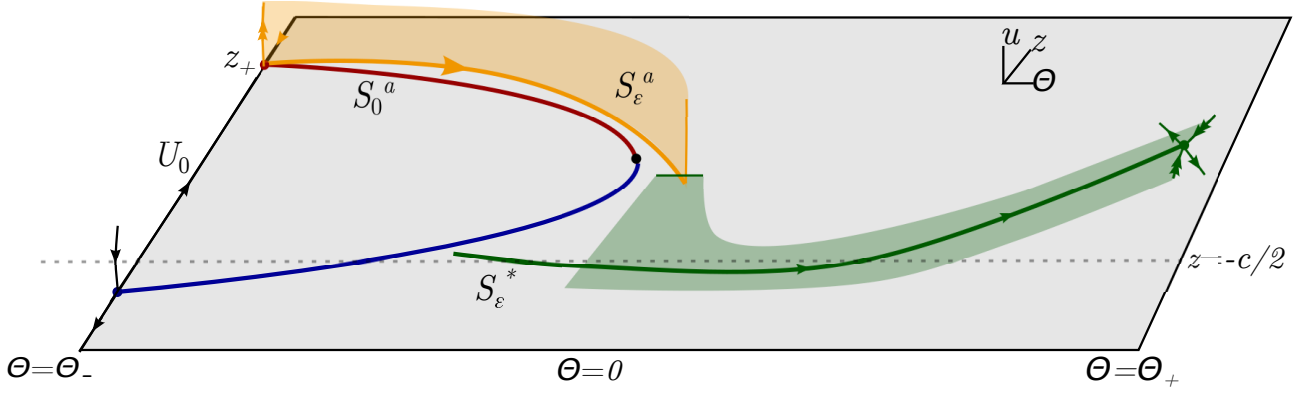}\hspace{-0.2in}
\caption{ Phase portrait for \eqref{e:zut1}-\eqref{e:zut3} for $0<\epsilon\ll1$. Red and blue curves give the critical attracting and repelling sets $S^{a/r}_0$, contained in the invariant plane $\{ u = 0\}$ (black), which make up the fold curve for $\epsilon = 0$. $S^a_\eps$, depicted in orange, gives the perturbed slow manifold for $\epsilon>0$. Green curve gives the perturbed slow manifold for the $\epsilon = 0$ equilibrium curve $(z_*,u_*,\theta)$.   Unstable manifold $W^\mathrm{u}(z_+,0,\theta_-)$ in orange foliated over $S^a_\eps$, stable manifold $W^\mathrm{s}(z_*,u_*,\theta_+)$, each two-dimensional with one slow dimension and one fast dimension.  }\label{f:eps1}
\end{figure}

\subsection{The $\epsilon = 0$ dynamics}
We next study the $\epsilon = 0$ limit of \eqref{e:zut1} - \eqref{e:zut3}. 
For $\epsilon = 0$, the planes $\{\theta = \mathrm{constant}\}$ are invariant.
The phase portraits on these invariant planes are depicted 
in Figure \ref{f:eps0}. 
The equilibria are now represented by
$$
(z_\pm(\theta),u) = (\pm\sqrt{-\theta},0), \quad\theta\leq0;\qquad\qquad (z_*,u_*(\theta)) = (-c/2,\sqrt{\theta+c^2/4}),\qquad \theta>-c^2/4. 
$$
The equilibria $(z_+,0,\theta)$ are stable 
in the $z$-direction for all $\theta<0$ 
and unstable in the $u$-direction for all $\theta\leq0.$ 
We let $\tilde W^\mathrm{u}(z_+,0,\theta)$ 
denote the 1-D unstable manifold of $(z_+,0,\theta)$. 
The equilibria $(z_-,0)$ are unstable 
in the $z$-direction for all $\theta<0$.
Then, in the $u$-direction, they are stable for $\theta<-c^2/4$ 
and unstable for $\theta\in (-c^2/4,0]$. 
Finally, the other equilibria 
$(z_*,u_*,\theta)$ 
of \eqref{e:zut1}-\eqref{e:zut3} with $\eps=0$
have one-dimensional stable manifolds, $\tilde W^\mathrm{s}(z_*,u_*,\theta)$. 
The bounded portions of these manifolds
converge in backward time for $\theta\leq0$ to the equilibrium $(z_-,0,\theta)$
(as may be seen from a null-cline analysis).

\begin{figure}[h!]
\centering
\includegraphics[trim = 0.0cm 0.0cm 0.0cm 0.0cm,clip,width=0.7\textwidth]{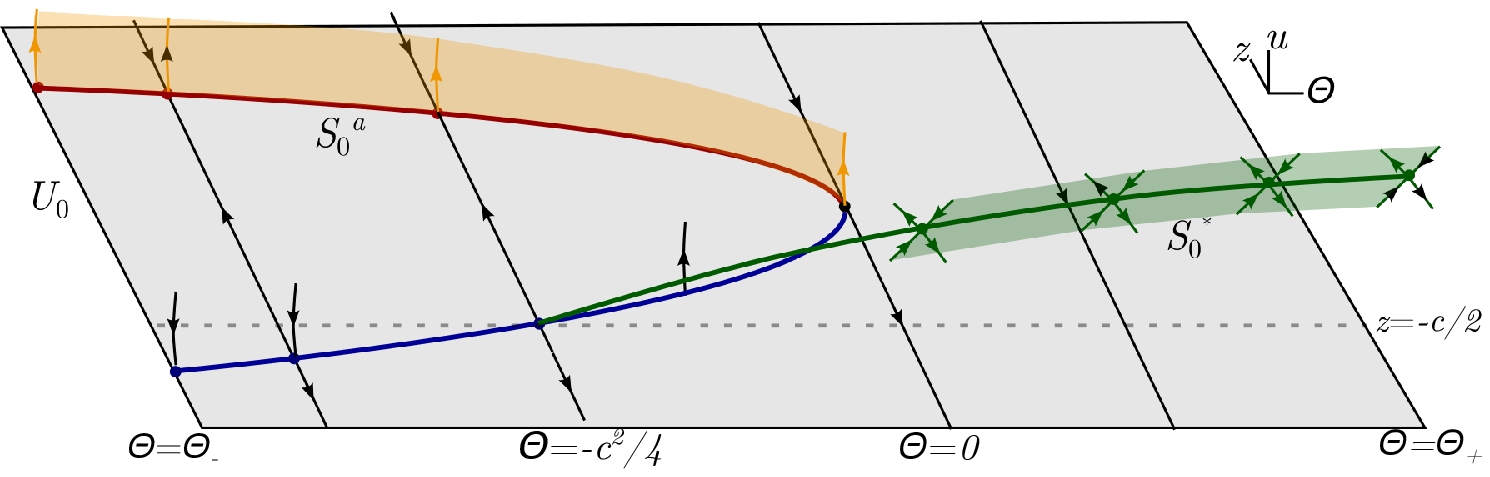}\hspace{-0.2in}\\
\vspace{0.1in}
\includegraphics[trim = 0.05cm 0.05cm 0.0cm 0.05cm,clip,width=0.95\textwidth]{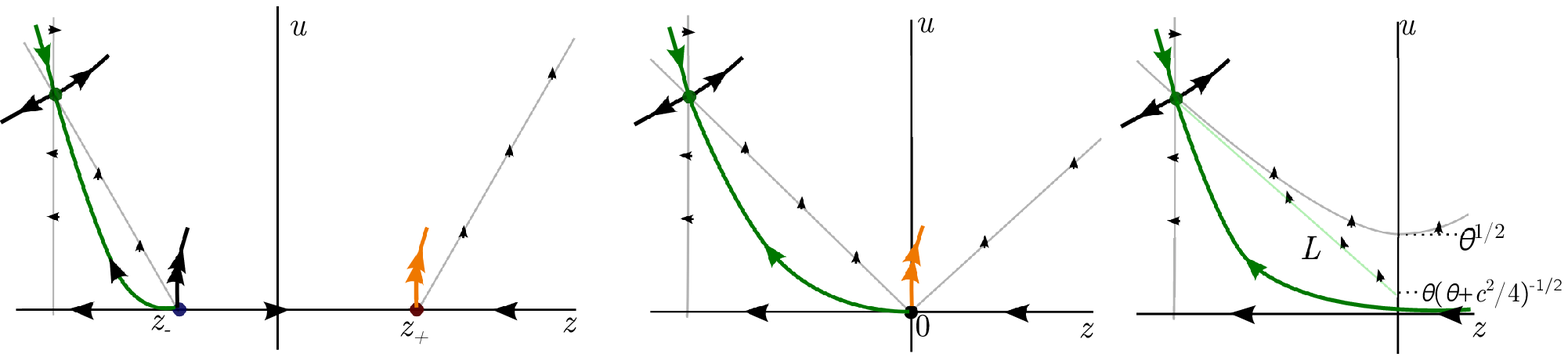}\hspace{-0.2in}\\
\caption{ Top: Phase portrait for \eqref{e:zut1}-\eqref{e:zut3} for $\epsilon = 0$.  Each point $(z_+,0,\theta)$ on $S_0^a$ has
one-dimensional unstable manifold $\tilde{W}^u(z_+,0,\theta)$ (orange fibers), while each point $(z_*,u_*,\theta)$ on $S_0^*$ has a one-dimensional stable manifold $\tilde{W}^s(z_*,u_*,\theta)$ (green fibers). Bottom: $(z,u)$ phase portraits for fixed $\theta$ and $\epsilon = 0$ in the cases $-c^2/4<\theta<0, \theta = 0,$ and $\theta>0$ from left to right. Light grey lines depict nullclines. The green curves denote $\tilde W^\mathrm{s}(z_*,u_*,\theta)$. In the bottom right figure, the trapping line $L$ defined in \eqref{e:ll} is depicted in light green.  }\label{f:eps0}
\end{figure}
%

With $\theta$ as a parameter, the $(z,u)$-vector-field has Jacobian
$$
\left(\begin{array}{cc}-2z & 2u \\u & z+c/2\end{array}\right).
$$
At the equilibrium $(z_+,0) = (\sqrt{-\theta},0)$,
the Jacobian has the following eigenvalue and eigenvector pairs:
$$
\nu = -2 z_+, V = (1,0)^T,\qquad \nu = z_+ + c/2, V = (0,1)^T.
$$
Hence, it is a saddle for each $\theta<0$, and the local unstable manifold is given as 
\begin{align}
\tilde W^\mathrm{u}(z_+,0,\theta) &:= \{(z,u)\,:\, z = h^u(u;\theta)\}\\
&h^u(u;\theta) = z_+ + \frac{u^2}{4 z_+ + c} - \frac{3 u^4}{2(3z_++c)(4z_++c)^2} + \mc{O}(|u|^6),
\end{align}
while its stable manifold is simply a subset of the $z$-axis. At the equilibrium $(z_-,0) = (-\sqrt{-\theta},0)$, 
the Jacobian has eigenvalue and eigenvector pairs
$$
\nu = -2 z_-, V = (1,0)^T,\qquad \nu = (z_-+c/2), V = (0,1)^T.
$$
Hence, the equilibrium is a saddle for $\theta<-c^2/4$ 
and a source for $-c^2/4<\theta<0$.  We remark that for $ -c^2/36<\theta<0$ the direction $(1,0)^T$ is the weak unstable direction and $(0,1)^T$ is the strong unstable direction, while these roles are reversed for $\theta<-c^2/36.$ In the former case, we can conclude that in backwards time, $\tilde W^\mathrm{s}(z_*,u_*,\theta)$ approaches the equilibrium $(z_-,0)$ tangentially along the $z$-axis (see Fig. \ref{f:eps0}, bottom left frame). Similar analysis can be done to obtain the expansion for the strong unstable manifold in the $u$-direction but, as it is not needed for this analysis, we omit it.

Finally, at the equilibrium $(z_*,u_*)$,
the Jacobian has the following eigenvalue and eigenvector pairs:
$$
\nu_{*,\pm} = c/2\pm \sqrt{3c^2/4 + 2\theta},\qquad V_\pm = \lp( \frac{\nu_{*,\pm}}{\sqrt{\theta+c^2/4}},1\rp)^T.
$$
Hence, it is a saddle. 
The local stable manifold is given by a graph over the $z$-coordinate as
\begin{align}
\tilde W^\mathrm{s}(z_*,u_*,\theta) &:= \{(z,u)\,:\, u = h^\mathrm{s}(z;\theta)\}\\
&h^s(z;\theta) = u_* + b_1 (z-z_*) + b_2 (z - z_*)^2 + b_3 (z - z_*)^3 +\mc{O}(|z-z_*|^4)\notag\\
&b_1 = \frac{-c-(3c^2+8\theta)^{1/2}}{2\sqrt{c^2 + 4\theta}},\,\, b_2 = \frac{(c+(3c^2+8\theta)^{1/2})(c(3c^2+8\theta)^{1/2}- 2(c^2+6\theta))}{2(c^2+4\theta)^{3/2}(c-3(3c^2+8\theta)^{1/2}},\notag\\
&b_3 = \frac{c(c+(3c^2+8\theta)^{1/2})\lp(-2(c^2+6\theta)+c(3c^2+8\theta)^{1/2}\rp)\lp( 4(2c^2+7\theta) + c(3c^2+8\theta)^{1/2}\rp)}{(c^2+4\theta)^{5/2}(c-3(3c^2+8\theta)^{1/2})^2(c-2(3c^2+8\theta)^{1/2})}.\notag\\
\end{align}

Using these facts with a standard nullcline analysis, one obtains the phase portraits in Figure \ref{f:eps0}. From this analysis and a trapping region argument, one can directly see that for each $\theta >0$ small the stable manifold $\tilde W^\mathrm{s}(z_*,u_*,\theta)$ intersects the $u$-axis at a point $u$ with $0<u<\sqrt{\theta}$.  It turns out we can obtain better control of this intersection point. This is the subject of the following lemma:
\begin{Lemma}\label{l:ws0}
For each $\theta>0$ sufficiently small, $\tilde W^\mathrm{s}(z_*,u_*,\theta)$ intersects the set $\{ z = 0\}$ transversely at one point $(0,u_s(\theta))$ with
$$
0<u_s(\theta)\leq \frac{\theta}{\sqrt{\theta+c^2/4}}.
$$
\end{Lemma}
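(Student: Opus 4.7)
The strategy is to construct a trapping region in the $(z,u)$-plane and track the relevant backward branch of $\tilde W^s(z_*,u_*,\theta)$ through it. Take $L$ to be the tangent line to the $z_\zeta=0$ nullcline $\{u^2=z^2+\theta\}$ at the saddle $(z_*,u_*)=(-c/2,u_*)$; equivalently,
\begin{equation*}
L:\quad \tfrac{c}{2}z+u_* u=\theta, \qquad\text{i.e.,}\qquad u=u_*-\tfrac{c}{2u_*}(z+c/2),
\end{equation*}
which meets $\{z=0\}$ exactly at $u=\theta/u_*$. Since the nullcline is strictly convex and $L$ is its tangent at the saddle, $L$ lies strictly below the nullcline for $z\in(-c/2,0]$. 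Let $R$ denote the open region enclosed by $L$ (above), $\{u=0\}$ (below), and $\{z=0\}$ (right); then in $R$ we have $u^2<z^2+\theta$, hence $z_\zeta<0$ and $u_\zeta>0$.

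Two computations drive the argument. First, the stable eigendirection at $(z_*,u_*)$ has slope $u_*/\nu_{*,-}$, which is strictly more negative than the slope $-c/(2u_*)$ of $L$: the inequality $u_*/\nu_{*,-}<-c/(2u_*)$ reduces, after squaring, to the identity $(2\theta+c^2/2)^2>0$. Hence the branch of $\tilde W^s(z_*,u_*,\theta)$ entering $\{z>-c/2\}$ enters $R$ tangentially below $L$. Second, $L$ is a one-way outward barrier: parameterize $L$ by $s=z+c/2\in(0,c/2]$; a direct substitution gives $u^2-z^2-\theta=-\theta s^2/u_*^2$ on $L$, so the inner product of the vector field with the outward normal $(c/2,u_*)$ simplifies to
\begin{equation*}
\tfrac{c}{2}z_\zeta+u_* u_\zeta = \tfrac{s}{2u_*^2}\bigl(2u_*^4-cs(u_*^2+\theta)\bigr).
\end{equation*}
The bracketed factor is linear decreasing in $s$ with value $2\theta^2>0$ at $s=c/2$, so the normal component is strictly positive throughout $L\cap\partial R$. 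Backward trajectories in $R$ therefore cannot exit through $L$.

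To finish, the backward branch of $\tilde W^s(z_*,u_*,\theta)$ that enters $R$ near the saddle has $z$ strictly increasing and $u$ strictly decreasing, and it cannot exit through $L$ (barrier), through $\{u=0\}$ (invariance of the $u$-axis), or return to the saddle (monotonicity of $z$). If it stayed in $R$ for all backward $\zeta$, monotonicity together with the absence of other equilibria in $\overline R$ would force convergence to an interior equilibrium, which does not exist; hence the trajectory must exit through $\{z=0\}$. Since $-z_\zeta\ge \theta c^2/(4u_*^2)>0$ on the subregion $R\cap\{u\le\theta/u_*\}$, this exit occurs in finite backward time at a unique point $(0,u_s(\theta))$, and the trapping forces $u_s(\theta)\in(0,\theta/u_*]$. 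Transversality is immediate since $z_\zeta\big|_{(0,u_s)}=u_s(\theta)^2-\theta\le(\theta/u_*)^2-\theta=-\theta c^2/(4u_*^2)<0$. I expect the main technical obstacle to be the normal-vector computation in the second paragraph, where the cancellations must align so that the barrier inequality collapses to the manifestly positive remainder $2\theta^2$.
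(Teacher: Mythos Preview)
Your proof is correct and follows essentially the same approach as the paper: both construct the barrier line $L$ as the tangent to the $z$-nullcline $\{u^2=z^2+\theta\}$ at the saddle $(z_*,u_*)$, use it together with the invariant line $\{u=0\}$ to trap the backward branch of $\tilde W^{\mathrm s}(z_*,u_*,\theta)$, and read off the bound $u_s(\theta)\le \theta/u_*$ from the height of $L$ at $z=0$. Your presentation is somewhat more explicit in a few places---you verify the stable eigendirection enters strictly below $L$, you check the barrier property via the outward-normal computation (which cleanly reduces to the positive remainder $2\theta^2$ at $s=c/2$), and you give a quantitative lower bound on $-z_\zeta$ to guarantee finite-time exit and transversality---whereas the paper argues the barrier via the slope comparison $u_\zeta/z_\zeta-m<0$ on $L$ and leaves the rest implicit.
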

\begin{proof}
We construct a trapping region for $\tilde{W}^\mathrm{s}(z_*,u_*,\theta)$, 
flowed backwards in $\zeta$. Let
\begin{equation}\label{e:ll}
L:= \{ (z,u)\,|\, u = u_* + m (z - z_*), z\in(z_*,0]\},\quad m := -\frac{c}{2\sqrt{\theta+c^2/4}},
\end{equation}
where $m$ is the slope of the $z$-nullcline at $(z_*,u_*)$. We find $L\cap\{z = 0, u\in(0,\sqrt{\theta})\}\neq\varnothing $ for all $\theta>0$ since
$$
u_*-mz_* = \sqrt{\theta+c^2/4} - \frac{c^2}{4\sqrt{\theta+c^2/4}} = \frac{\theta}{\sqrt{\theta+c^2/4}}>0
$$
Next, one can readily calculate that on $L$
$$
\frac{u_\zeta}{z_\zeta} - m = \frac{1}{\sqrt{1+4\theta/c^2}} + \frac{\sqrt{c^2+4\theta}(2c z - 4\theta)}{4\theta(c+2z)}<0,
$$
for all $z\in(-c/2,0)$ and any $\theta>0$ sufficiently small. 
Hence, the flow points ``outwards" along $L$ in forward time. This shows that the slope of the vector field along $L$ is more negative than that of the line $L$ itself, and hence that the flow points outward along $L$. Combining this with the facts that the flow also points outward along the $u$-nullcline at $z = -c/2$ and that the $u = 0$ line is invariant, we obtain that $\tilde W^\mathrm{s}(z_*,u_*,\theta)$ must intersect $I:=\{(0,u)\,|\, 0<u<\sqrt{\theta}\}$ with $u< \theta/\sqrt{\theta+c^2/4}.$ Finally, transversality follows by the properties of the vector field along the line $I$.
\end{proof}

Since $L$ defines a boundary of the trapping region in the above proof, we also have the following corollary:
\begin{Corollary}
Let $\delta>0$ be small, fixed, and independent of $\epsilon$. There exist a $\theta_0>0$ sufficiently small 
and a constant $C_\delta>0$, possibly dependent on $\delta$, 
such that the intersection point $(\delta,u_{s,\delta}(\theta)):= \tilde W^\mathrm{s}(z_*,u_*,\theta)\cap \{z = -\delta\}$ satisfies
\begin{equation}
0< u_{s,\delta}(\theta)\leq C_\delta \theta,
\end{equation}
uniformly for all $\theta\in(0,\theta_0)$.
\end{Corollary}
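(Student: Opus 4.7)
The plan is to combine Lemma~\ref{l:ws0} with a monotonicity argument on $\tilde{W}^{\mathrm{s}}(z_*, u_*, \theta)$ continued past its transverse intersection $(0, u_s(\theta))$ with $\{z=0\}$.

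Lemma~\ref{l:ws0} supplies $0 < u_s(\theta) \le \theta/\sqrt{\theta + c^2/4} \le (2/c)\theta$ once $\theta_0 < c^2$, together with the transversality inequality $z_\zeta|_{(0,u_s)} = u_s(\theta)^2 - \theta < 0$ (which holds since $u_s(\theta)^2 \le \theta^2/(\theta+c^2/4) < \theta$). Transversality allows the stable manifold to be continued smoothly as a solution of \eqref{e:zut1}--\eqref{e:zut2} past $(0, u_s(\theta))$ into the region containing $\{z = -\delta\}$. On the continuation, the invariance of $\{u=0\}$ together with $u_s(\theta)>0$ keeps $u > 0$, while the initial inequality $u_s(\theta)^2 < \theta \le z^2 + \theta$ places the continuation strictly below the $z$-nullcline $\{u = \sqrt{z^2+\theta}\}$. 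Since $u$ evolves monotonically along the continuation while $\sqrt{z^2+\theta}$ is nondecreasing in $|z|$, this ordering is preserved, fixing the signs of $z_\zeta$ and $u_\zeta$ throughout and making $u(z)$ strictly monotone up to the target hyperplane $\{z=-\delta\}$.

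Combining the monotonicity with the Lemma~\ref{l:ws0} bound then yields
\begin{equation*}
    0 < u_{s,\delta}(\theta) < u_s(\theta) \le \frac{\theta}{\sqrt{\theta + c^2/4}} \le \frac{2}{c}\theta,
\end{equation*}
proving the corollary with $C_\delta = 2/c$ (a constant in fact independent of $\delta$, which is consistent with the statement that it may depend on $\delta$). The main technical step is verifying that the continuation reaches $\{z=-\delta\}$ without leaving the strip $\{0 < u < \sqrt{z^2+\theta}\}$; this is handled by the sign analysis and the invariance of $\{u=0\}$ above. A significantly sharper bound, of the form $u_{s,\delta}(\theta) = O\!\bigl(\theta^{3/2}\,\mathrm{e}^{-c\pi/(4\sqrt{\theta})}\bigr)$, can be extracted via Gronwall integration of the linearized logarithmic ODE $d\ln u/dz \approx -(z+c/2)/(z^2+\theta)$, but is not needed for the stated corollary.
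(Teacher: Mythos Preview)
Your monotonicity argument has the direction reversed. The section $\{z=-\delta\}$, with $0<\delta<c/2$, lies \emph{between} the equilibrium $(z_*,u_*)=(-c/2,\sqrt{\theta+c^2/4})$ and the section $\{z=0\}$, not beyond it. Along the relevant branch of $\tilde W^{\mathrm s}(z_*,u_*,\theta)$ in this strip one has $z_\zeta=u^2-z^2-\theta<0$ (below the $z$-nullcline) and $u_\zeta=(z+c/2)u>0$, hence $du/dz<0$: the stable manifold is monotone \emph{decreasing} as a graph $u=u(z)$ on $(z_*,0]$. Consequently $u_{s,\delta}(\theta)>u_s(\theta)$, not the reverse, and your chain $0<u_{s,\delta}(\theta)<u_s(\theta)\le (2/c)\theta$ fails at the second inequality. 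The same sign error propagates into your closing remark: integrating $d\ln u/dz\approx -(z+c/2)/(z^2+\theta)$ from $z=0$ down to $z=-\delta$ yields $u_{s,\delta}/u_s\sim\exp\!\bigl(+c\pi/(4\sqrt{\theta})\bigr)$, which diverges rather than decays as $\theta\to 0^+$.

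The paper's argument is different in spirit: since the line $L$ from the proof of Lemma~\ref{l:ws0} bounds the backward-flowed stable manifold from above on the entire interval $z\in(z_*,0]$, one simply evaluates $L$ at $z=-\delta$ to get an upper bound $u_{s,\delta}(\theta)\le(\theta+c\delta/2)/\sqrt{\theta+c^2/4}$. It is worth noting, though, that this quantity tends to $\delta$ (not $0$) as $\theta\to 0^+$, so the trapping line $L$ by itself yields only an $\mathcal O(1)$ bound; the linear-in-$\theta$ estimate as literally stated does not follow from $L$ alone, and indeed $u_{s,\delta}(0^+)>0$ by continuity with the $\theta=0$ connecting orbit. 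For the downstream matching in Section~\ref{s:ori} only uniform smallness of $u_{s,\delta}$ is actually required.
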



\section{Invariant manifolds, foliations, and slow flow }\label{sec:eps.gt.0}
In this section,
we analyze the dynamics of system \eqref{e:zut1}--\eqref{e:zut3}
for $0 < \eps \ll 1$.
Our goal will be to use geometric singular perturbation theory \cite{fenichel,Jones1995} to view $W^\mathrm{u}(z_+,0,\theta_-)$ as a perturbation of the union of $\epsilon = 0$ manifolds $\cup_{\theta \in (-1-c^2/4,0]}\tilde W^\mathrm{u}(z_+,0,\theta) $ and $W^\mathrm{s}(z_*,u_*,\theta_+)$ as a perturbation of the union of $\epsilon = 0$ manifolds $\cup_{\theta\in(-c^2/4,1-c^2/4]}\tilde W^\mathrm{s}(z_*,u_*,\theta)$.  

\paragraph{Slow passage through a fold} 

Let us begin with $W^\mathrm{u}(z_+,0,\theta_-)$. 
First, for $\epsilon = 0$ the curve of equilibria
$$
S_0^a:=\{(z,u,\theta)\,:\, z = z_+(\theta),  u = 0, \theta\in [-1-c^2/4, 0)\}
$$
is a normally hyperbolic invariant manifold with expanding direction in the $u$ direction and attracting direction in the $z$ direction for all $\theta\leq-b$, for some $b>0$ fixed, small, and independent of $\epsilon$. Note that this family collides with a repelling curve of equilibria $S_0^r:=\{ z = z_-(\theta), u = 0, \theta\in[-1-c^2/4,0)\}$ in a generic fold bifurcation at $\theta = 0$, and hence loses normally hyperbolicity at $\theta = 0$.

Applying Fenichel theory to the dynamics on the invariant set $U_0 = \{u = 0\}$,
that is to the fast-slow subsystem on the invariant $(z,\theta)$-plane, we see that the critical manifold $S_0^a$ perturbs smoothly in $0<\epsilon\ll1$ to a 1-D invariant slow manifold $S_\epsilon^a\subset U_0$ for $\theta<-b<0$. Also note that $S_\epsilon^a$ makes up the weak unstable manifold of the left equilibrium $(z_+,0,\theta_-)$. Since $\theta_\zeta\approx \epsilon$ near $\theta = 0$, Theorem 2.1 of \cite{krupaszmolyan01} allows one to track $S_\epsilon^a$ forward in $\theta\geq-b$ past the fold point at the origin. Further, one can rigorously calculate the bifurcation delay in $\theta>0$. In particular, setting 
$$
\tilde \Sigma_\delta:=\{(z,\theta)\,:\, z = -\delta, \theta\in (0,\delta)\},
$$
one can adapt
Theorem 2.1 \cite{krupaszmolyan01} to obtain the following result for the fast-slow subsystem on $U_0$:
\begin{Proposition}\label{p:delay}
Let $\delta>0$ be fixed small. 
There exists an $\epsilon_0>0$ such that, for all $0<\epsilon\leq \epsilon_0$,
the slow invariant manifold $S_\epsilon^a$ passes through the section 
$\tilde\Sigma_\delta$ at a point 
$(z,\theta) = (-\delta,\theta_a(\epsilon))$ with
\beq
\theta_a(\epsilon) 
= \Omega_0 \left(1 - \frac{c^4}{16}\right)^{2/3} \epsilon^{2/3} 
+ \mathcal{O}(\epsilon \ln(\eps)),\label{e:slpass}
\eeq
where $\Omega_0$ is the smallest positive zero of 
$J_{-1/3}(2z^{3/2}/3) + J_{1/3}(2z^{3/2}/3)$ 
and $J_{\pm1/3}$ are Bessel functions of the first kind. (Note $z$ is a generic complex variable here, distinct from $z$ introduced in \eqref{e:zut1} -\eqref{e:zut3}, and also $\Omega_0  = 2.338107...$)
\end{Proposition}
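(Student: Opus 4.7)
The plan is to reduce to the $(z,\theta)$-subsystem on the invariant plane $U_0=\{u=0\}$ and then apply the geometric blow-up analysis of Krupa--Szmolyan \cite{krupaszmolyan01} for slow passage through a planar fold, taking care to track the explicit constants that appear in the leading-order delay. On $U_0$ the dynamics are
\begin{align*}
z_\zeta &= -z^2 - \theta, \\
\theta_\zeta &= \epsilon\bigl(1-(\theta+c^2/4)^2\bigr),
\end{align*}
with critical manifold $\{-z^2-\theta=0\}$ decomposing into the attracting branch $S_0^a=\{z=\sqrt{-\theta}\}$ and the repelling branch $S_0^r=\{z=-\sqrt{-\theta}\}$, meeting in a fold at the origin. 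I first verify the hypotheses of the Krupa--Szmolyan normal-form theorem at this fold: the non-degeneracy $\partial_z^2(-z^2-\theta)|_{0}=-2\neq 0$ (quadratic fold), transversality $\partial_\theta(-z^2-\theta)|_{0}=-1\neq 0$, and non-vanishing slow-drift speed
\[
\beta := \theta_\zeta/\epsilon\,\big|_{\theta=0}=1-\tfrac{c^4}{16}>0,
\]
which is strictly positive precisely for $c\in(0,2)$.

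Next, I perform the standard blow-up of the fully degenerate point $(z,\theta,\epsilon)=(0,0,0)$ by the quasihomogeneous transformation $z=r\bar z$, $\theta=r^2\bar\theta$, $\epsilon=r^3\bar\epsilon$, and work in the rescaling chart $\bar\epsilon=1$, where $r=\epsilon^{1/3}$. To leading order in $r$, the blown-up system becomes the planar Riccati/integrable system
\begin{align*}
\bar z_{\bar\zeta} &= -\bar z^2 - \bar\theta, \\
\bar\theta_{\bar\zeta} &= \beta,
\end{align*}
whose solutions with $\bar\theta(\bar\zeta)=\beta\bar\zeta+\bar\theta_0$ linearize through $\bar z = \phi'/\phi$ to an Airy-type equation $\phi'' + (\beta\bar\zeta+\bar\theta_0)\phi=0$. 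The extension of $S_\epsilon^a$ into the rescaling chart is characterized by the unique trajectory that, entering from $\bar\zeta\to-\infty$, is asymptotic to the attracting parabola $\bar z\sim\sqrt{-\bar\theta}$; after the change of variable $s=(\beta\bar\zeta+\bar\theta_0)/\beta^{2/3}$ this selects the solution of the Airy equation corresponding to $\mathrm{Ai}(-s)$, equivalently the combination $J_{-1/3}(\tfrac{2}{3}s^{3/2})+J_{1/3}(\tfrac{2}{3}s^{3/2})$ of Bessel functions. The extension first crosses $\bar z=-\infty$ (equivalently, $\phi=0$) at the smallest positive zero $s=\Omega_0$ of this combination, which corresponds in $\theta$-coordinates to
\[
\theta=\beta^{2/3}\Omega_0\,\epsilon^{2/3}=\Omega_0\!\left(1-\tfrac{c^4}{16}\right)^{2/3}\!\epsilon^{2/3}.
\]

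Finally, I combine the rescaling-chart picture with the incoming chart (where one tracks $S_\epsilon^a$ from $\theta\le -b$ up to the boundary of a small neighborhood of the fold using Fenichel theory and gets only exponentially small corrections) and the outgoing chart (where one passes from the boundary of the rescaling chart out to the section $\tilde\Sigma_\delta=\{z=-\delta\}$ using the strong expansion in $z$). The matching in the outgoing chart is what produces the sharp $\mathcal{O}(\epsilon\ln\epsilon)$ remainder, since $\bar z$ reaches order $-1/r$ logarithmically slowly before leaving the blow-up chart. Assembling the three charts yields the asserted expansion \eqref{e:slpass}. The main technical obstacle is precisely this last step: keeping track of the logarithmic corrections picked up in the transition from the rescaling chart back to the original $(z,\theta)$-variables, and confirming that the higher-order terms in $z_\zeta=-z^2-\theta$ and in the slow drift $\theta_\zeta=\epsilon(1-(\theta+c^2/4)^2)=\epsilon\beta+\mathcal{O}(\epsilon\theta)$ contribute only at order $\epsilon\ln\epsilon$; both are standard verifications in the Krupa--Szmolyan framework once the normal-form hypotheses above have been checked.
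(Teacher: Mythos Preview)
Your proposal is correct and follows essentially the same strategy as the paper: both restrict to the invariant plane $U_0$ and invoke the Krupa--Szmolyan fold analysis to obtain the leading-order delay and the $\mathcal{O}(\epsilon\ln\epsilon)$ remainder. The only difference is one of economy: the paper performs a single explicit rescaling $z=-(1-c^4/16)^{1/3}\tilde x$, $\theta=-(1-c^4/16)^{2/3}\tilde y$, $\zeta=(1-c^4/16)^{-1/3}\tau$ to put the $(z,\theta)$-system exactly into the normal form (2.5) of \cite{krupaszmolyan01} and then cites Theorem~2.1 there as a black box, whereas you verify the normal-form hypotheses and then sketch the internal blow-up/Airy analysis that underlies that theorem. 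Your route is longer but has the virtue of making the origin of the constant $\Omega_0(1-c^4/16)^{2/3}$ and of the logarithmic correction visible, while the paper's route is quicker but opaque; both arrive at the same conclusion for the same reason.
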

\begin{proof}
Define the following change of coordinates
$$
z = -(1-c^4/16)^{1/3}\tilde x,
\qquad 
\theta = -\left(1-\frac{c^4}{16}\right)^{2/3} \tilde y,
\qquad 
\zeta= \tau \left(1-\frac{c^4}{16}\right)^{-1/3}. 
$$
On the invariant set $U_0$, the system \eqref{e:zut1}--\eqref{e:zut3} then takes the form
\begin{align}
\frac{d\tilde x}{d\tau} &= \tilde x^2 - \tilde y,\\
\frac{d\tilde y}{d\tau} &= \epsilon\left(-1-\frac{c^2}{2(1-\frac{c^4}{16})^{1/3}} \tilde y + (1-\frac{c^4}{16})^{1/3}\tilde y^2\right).
\nonumber
\end{align}
This system is equivalent to equation (2.5) in \cite{krupaszmolyan01} with their $g$ defined as $g(\tilde x,\tilde y,\epsilon) = \left(-1-\frac{c^2}{2(1-\frac{c^4}{16})^{1/3}} \tilde y + (1-\frac{c^4}{16})^{1/3}\tilde y^2)\right).$ 
Hence, Theorem 2.1 in \cite{krupaszmolyan01}
shows that 
$\tilde{y} = -\Omega_0 \eps^{2/3} + \mathcal{O}(\eps \ln(\eps))$
on $\tilde \Sigma_\delta$.
Translating this back, one obtains $\theta_a(\eps)$,
and the result is established.
\end{proof}

Next, notice that the subset 
$$
U_0^r:= \{(z,u,\theta)\,:\, z>-c/2, u = 0\},
$$
of the invariant plane $U_0$, is a normally hyperbolic (repelling) invariant manifold for all $\epsilon\geq0$ (for completeness we also notice that the corresponding subset $U_0^a\subset U_0$  with $z<-c/2$ is normally attracting). The dynamics in the normal direction to $U_0^r$ are exponentially repelling, while the dynamics in the tangential directions along $U_0^r$ are exponentially attracting in a neighborhood of $S_0^a$. Hence, the dynamics in a tubular neighborhood of $S_0^a$ are smoothly foliated by 1-D unstable fibers which we denote by $\mc{F}_{(z,\theta)}^\mathrm{uu}$. The Fenichel theory \cite{fenichel,Jones1995} guarantees that these fibers can be written as a graph over the normal direction
$$
\mc{F}^\mathrm{uu}_{(z,\theta)}:= \{ (z,u,\theta)\,|\, (z,\theta) = h^\mathrm{uu}(u;z,\theta), |u|\leq \gamma \},$$
for some $\gamma>0$ small and independent of $\eps$.
Here, $h^\mathrm{uu}$ is $C^r$-smooth in $u$, $C^{r-1}$-smooth in the base-point $(z,\theta)$ for any $r\in\N$, 
and satisfies
$$
h^\mathrm{uu}(0;z,\theta) = (z,\theta), \quad  \frac{d}{du}h^\mathrm{uu}(0;z,\theta) = 0.
$$
This foliation satisfies the invariance condition
$$
\Phi_\zeta(\mc{F}^\mathrm{uu}_{(z,\theta)})\subset \mc{F}^\mathrm{uu}_{\phi_\zeta(z,\theta)},
$$
where $\Phi_\zeta$ is the flow of the full 3-D system, and $\phi_\zeta$ is the flow on the invariant set $U_0$. 
For the base points on $S_0^a$ in particular, these fibers are given by the unstable manifolds $\tilde W^\mathrm{u}(z_+,0,\theta)$. This foliation persists smoothly for $0<\epsilon\ll1$, but we suppress the $\epsilon$-dependence to simplify notation. 


For base points on the perturbed slow manifold $S_\epsilon^a$, the union of fibers gives a local representation of the unstable manifold of the point $(z_+,0,\theta_-)$, and $\mc{F}^\mathrm{uu}_{(z_+,\theta_-)}$ gives its local strong unstable manifold,
$$
W^\mathrm{u}(z_+,0,\theta_-)\cap\{|u|\leq\gamma\} = \bigcup_{(z,\theta)\in S_\epsilon^a} \mc{F}^\mathrm{uu}_{(z,\theta)},
$$
for some $\gamma>0$ sufficiently small. See Figure \ref{f:eps1} for a depiction. In addition, such a smooth foliation also holds in a neighborhood of the origin $(z,u,\theta) = (0,0,0)$, since the dynamics in $z$ are weakly exponential for $-1\ll\theta<0$ and algebraic for $\theta\geq0$.

As we are interested in how the manifold $W^\mathrm{u}(z_+,0,\theta_-)$ behaves in a neighborhood of the origin, we extend the section $\tilde \Sigma_\delta$ into the $u$-direction, defining for $\delta,\eta, \gamma>0$ fixed small,
$$
\Sigma_\delta:=\{(z,\theta,u)\,: \, z = -\delta, \theta\in(-\eta,\eta), u\in[0,\gamma)\}.
$$
We can now use the strong-unstable fibers over $S_\epsilon^a$ to describe the intersection of $W^\mathrm{u}(z_+,0,\theta_-)$ with $\Sigma_\delta$.
\begin{Lemma}\label{l:wuep}
Fix $\delta,\eta,\gamma>0$ small. Then there exists an $\epsilon_0$ such that for all $\epsilon\in[0,\epsilon_0)$ the unstable manifold $W^\mathrm{u}(z_+,0,\theta_-)$ intersects $\Sigma_\delta$ transversely and is a graph in $\theta$ of a smooth function $g^u:\R\rightarrow\R$ over the $u$-coordinate:
$$
W^\mathrm{u}(z_+,0,\theta_-)\cap \Sigma_\delta = \{(-\delta,u,g^u(u;\epsilon))\,,\, u\in[0,\gamma)\}.
$$
\end{Lemma}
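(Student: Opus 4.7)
The plan is to use the Fenichel strong-unstable fiber foliation of $W^\mathrm{u}(z_+,0,\theta_-)$ over the perturbed slow manifold $S_\epsilon^a$ to parameterize $W^\mathrm{u}$ near its crossing of $\Sigma_\delta$, and then to apply the implicit function theorem. Since $S_\epsilon^a\subset U_0\cap W^\mathrm{u}(z_+,0,\theta_-)$ is the weak unstable manifold of the equilibrium inside the invariant plane $\{u=0\}$, Proposition \ref{p:delay} identifies the unique point $p_0:=(-\delta,\,0,\,\theta_a(\epsilon))\in S_\epsilon^a\cap\Sigma_\delta$ lying in $W^\mathrm{u}\cap\Sigma_\delta$, thereby fixing $g^u(0;\epsilon)=\theta_a(\epsilon)$. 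Parametrizing $S_\epsilon^a$ by its $\theta$-coordinate $\theta_0$ (valid near $p_0$ since $\dot\theta>0$ along $S_\epsilon^a$) as $p(\theta_0)=(z_a(\theta_0),\,0,\,\theta_0)$ with $z_a(\theta_a(\epsilon))=-\delta$, the Fenichel fiber foliation realizes $W^\mathrm{u}$ locally as the image of the smooth map
\begin{equation}\nonumber
\Psi(\theta_0,u):=\bigl(z_a(\theta_0)+H_1(u;\theta_0),\;u,\;\theta_0+H_2(u;\theta_0)\bigr),\qquad |u|\leq\gamma,
\end{equation}
with $H_i(0;\theta_0)=0$, $\partial_u H_i(0;\theta_0)=0$, and $\gamma$ independent of $\epsilon$.

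Next, I would impose the constraint $z=-\delta$, i.e., set $F(u,\theta_0):=z_a(\theta_0)+H_1(u;\theta_0)+\delta=0$. At $(u,\theta_0)=(0,\theta_a(\epsilon))$ one has $F=0$ and $\partial_{\theta_0}F=z_a'(\theta_a)$. Using that the flow on $U_0$ satisfies $\dot z=-z^2-\theta$ and $\dot\theta=\epsilon(1-(\theta+c^2/4)^2)$, the chain rule yields $z_a'(\theta_a)=-(\delta^2+\theta_a)/[\epsilon(1-(\theta_a+c^2/4)^2)]$, which is nonzero (in fact of order $\epsilon^{-1}$) for small $\epsilon$. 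Combined with the Fenichel estimate $\partial_{\theta_0}H_1(u;\theta_0)=\mathcal{O}(u^2)$, shrinking $\gamma$ if necessary (independently of $\epsilon$) ensures $\partial_{\theta_0}F\neq 0$ uniformly in $(u,\theta_0)$. The implicit function theorem then produces a smooth solution $\theta_0=T(u;\epsilon)$ with $T(0;\epsilon)=\theta_a(\epsilon)$, and one sets $g^u(u;\epsilon):=T(u;\epsilon)+H_2(u;T(u;\epsilon))$. Transversality of $W^\mathrm{u}$ with $\Sigma_\delta$ at each intersection point follows because $\partial_{\theta_0}\Psi$ has nonzero $z$-component, so together with $T\Sigma_\delta$ it spans $\R^3$.

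The main subtlety is ensuring the IFT hypothesis persists uniformly on the entire interval $[0,\gamma)$ and for all sufficiently small $\epsilon$, so that the local graph extends globally in $u$. This is handled by the disparity of scales: $z_a'(\theta_a)=\mathcal{O}(\epsilon^{-1})$ is large while $\partial_{\theta_0}H_1=\mathcal{O}(u^2)$ is small with constants uniform in $\epsilon$, so $\partial_{\theta_0}F$ retains the sign of $z_a'$ throughout. Once this uniform non-degeneracy is established, the graph $g^u(\cdot;\epsilon)$ is smooth on $[0,\gamma)$, and the intersection is exactly $\{(-\delta,u,g^u(u;\epsilon)):u\in[0,\gamma)\}$, as claimed.
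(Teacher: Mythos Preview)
Your proof is correct and follows essentially the same approach as the paper: both use the Fenichel strong-unstable fiber foliation of $W^{\mathrm u}$ over $S_\epsilon^a$, the transverse crossing of $S_\epsilon^a$ through $\tilde\Sigma_\delta$ established in Proposition~\ref{p:delay}, and the fact that the fibers are tangent to the $u$-direction at $u=0$ (your conditions $H_i(0;\theta_0)=\partial_u H_i(0;\theta_0)=0$). The paper's proof is a one-sentence sketch citing exactly these three ingredients, whereas you have spelled out the implicit function theorem step and the uniformity in $\epsilon$ explicitly.
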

\begin{proof} 
This follows by the transverse intersection of $S^a_\eps$ with
$\tilde \Sigma_\delta$, the fact that the fibers $\mc{F}^\mathrm{uu}_{(z,\theta)}$ are vertical at
leading order in $u$, and the smoothness of the fibers $\mc{F}^\mathrm{uu}_{(z,\theta)}$.
\end{proof}

Next, we use Fenichel theory to conclude that, for $0<\epsilon\ll1$, the manifold $W^\mathrm{s}(z_*,u_*,\theta_+)$ is a smooth perturbation of the union of stable manifolds $\cup_{\theta>-c^2/4} \tilde W^\mathrm{s}(z_*,u_*,\theta)$ for $\epsilon = 0$. Indeed the saddle curve
$$S_0^*:=\{(z,u,\theta)=(z_*,u_*,\theta)\,:\,\theta \in(-c^2/4, 1- c^2/4)\},$$
 depicted in green in Figure \ref{f:eps0}, persists for $0<\epsilon \ll 1$ as a 1-D  normally hyperbolic invariant slow manifold $S_\epsilon^*$. The asymptotic expansion of $S_\epsilon^*$ is given by
\begin{align}
z &= -\frac{c}{2}
+ \epsilon\frac{1-\left(\theta+\frac{c^2}{4}\right)^2}{2\left(\theta+\frac{c^2}{4}\right)}
+ \mathcal{O}(\epsilon^2),
\qquad
u = \sqrt{\theta + \frac{c^2}{4}}
- \epsilon \frac{c}{4}\frac{1-\left(\theta+\frac{c^2}{4}\right)^2}{\left(\theta+\frac{c^2}{4}\right)^{3/2}}
+ \mathcal{O}(\epsilon^2).
\end{align}

We have
\begin{Lemma}\label{l:wsep}
Fix $\delta,\eta,\gamma>0$ small. There exists an $\epsilon_0>0$ such that,
for all $\epsilon\in(0,\epsilon_0)$, the invariant manifold $W^\mathrm{s}(z_*,u_*,\theta_+)$ intersects the section $\Sigma_\delta$ transversely in a curve which is described as the graph $g^\mathrm{s}:\R\rightarrow\R$ over the $\theta$ coordinate:
\begin{equation}
W^\mathrm{s}(z_*,u_*,\theta_+)\cap\Sigma_\delta = \{(-\delta,g^\mathrm{s}(\theta;\epsilon),\theta)\},\qquad g^\mathrm{s}(\theta;\epsilon) = \mathcal{O}(\epsilon+\theta).
\end{equation}
\end{Lemma}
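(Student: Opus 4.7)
The plan is to read off the intersection $W^{\mathrm{s}}(z_*,u_*,\theta_+)\cap\Sigma_\delta$ directly from the Fenichel-theoretic picture sketched in the paragraph just before the statement. For $0<\epsilon\ll 1$, the two-dimensional manifold $W^{\mathrm{s}}(z_*,u_*,\theta_+)$ is smoothly foliated by one-dimensional fast stable fibers over the perturbed slow manifold $S_\epsilon^*$, each fiber being a smooth $\mathcal{O}(\epsilon)$-perturbation of the planar stable manifold $\tilde W^{\mathrm{s}}(z_*,u_*,\theta_0)$ of the frozen-$\theta_0$ saddle. The proof proceeds by tracing these fibers backward in $\zeta$ from $S_\epsilon^*$ to the section $\{z=-\delta\}$.

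Concretely, take a base point on $S_\epsilon^*$ with slow coordinate $\theta_0$ near $0$ and follow its fast stable fiber backward. Because $\theta_\zeta=\epsilon(1-(\theta+c^2/4)^2)$ is of order $\epsilon$ and the transit time from a neighborhood of $(z_*,u_*(\theta_0))$ out to $\{z=-\delta\}$ is $\mathcal{O}(1)$, the fiber crosses $\Sigma_\delta$ at a point whose $\theta$-coordinate is $\theta_\Sigma=\theta_0+\mathcal{O}(\epsilon)$ and whose $u$-coordinate equals, up to $\mathcal{O}(\epsilon)$, the $\epsilon=0$ planar intersection value $u_{s,\delta}(\theta_0)$ of $\tilde W^{\mathrm{s}}(z_*,u_*,\theta_0)\cap\{z=-\delta\}$ supplied by the Corollary to Lemma~\ref{l:ws0}. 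Inverting the smooth relation $\theta_0=\theta_\Sigma+\mathcal{O}(\epsilon)$ then exhibits $W^{\mathrm{s}}(z_*,u_*,\theta_+)\cap\Sigma_\delta$ as a graph $u=g^{\mathrm{s}}(\theta;\epsilon)=u_{s,\delta}(\theta)+\mathcal{O}(\epsilon)$ over the $\theta$-coordinate on $\Sigma_\delta$.

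The estimate $g^{\mathrm{s}}(\theta;\epsilon)=\mathcal{O}(\epsilon+\theta)$ then follows from the Corollary to Lemma~\ref{l:ws0}, which gives $0<u_{s,\delta}(\theta)\le C_\delta\theta$ for $\theta\in(0,\theta_1)$, together with continuous dependence of the planar stable manifold on $\theta$ that extends the bound smoothly through $\theta=0$ into the full range of $\Sigma_\delta$. Transversality is inherited from the $\epsilon=0$ planar dynamics: along $\tilde W^{\mathrm{s}}(z_*,u_*,\theta)$ at the crossing with $\{z=-\delta\}$, the component $z_\zeta=-\delta^2-\theta+u^2$ is bounded away from zero for $\delta$ fixed small and $\theta, u=\mathcal{O}(\delta)$, and this transversality persists under the $\mathcal{O}(\epsilon)$ Fenichel perturbation of the fast fibers.

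I expect the main obstacle to be making the planar bound $u_{s,\delta}(\theta)=\mathcal{O}(|\theta|)$ uniform across $\theta=0$ into the range $\theta\le 0$, since the trapping construction of Lemma~\ref{l:ws0} relies strictly on $\theta>0$ and degenerates as $\theta\to 0$, where the planar origin becomes non-hyperbolic. I would close this gap either by constructing a $\theta$-parameterized family of trapping wedges whose vertices converge to the origin as $\theta\to 0^+$, or by directly exploiting the smoothness in $\theta$ of the series expansion of $\tilde W^{\mathrm{s}}$ at the saddle $(z_*,u_*(\theta))$ displayed in Section~\ref{s:set}, which lets one transport the transverse crossing with $\{z=-\delta\}$ continuously through $\theta=0$ and thereby secure the uniform bound needed to conclude.
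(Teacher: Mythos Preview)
Your proposal is correct and follows essentially the same route as the paper: invoke Fenichel theory to view $W^{\mathrm{s}}(z_*,u_*,\theta_+)$ as the union of strong-stable fibers over the perturbed slow manifold $S_\epsilon^*$, each an $\mathcal{O}(\epsilon)$-perturbation of the planar manifold $\tilde W^{\mathrm{s}}(z_*,u_*,\theta)$, then combine the Corollary to Lemma~\ref{l:ws0} with smooth dependence on $(\theta,\epsilon)$. The paper's proof is terser---it simply cites Lemma~\ref{l:ws0}, Fenichel theory, and ``smooth dependence of $\tilde W^{\mathrm{s}}(z_*,u_*,\theta)$ on $\theta$''---and the gap you flag about carrying the bound through $\theta=0$ is handled there by exactly the smoothness-in-$\theta$ argument you propose as your second fix (the saddle $(z_*,u_*(\theta))$ remains uniformly hyperbolic for $\theta$ near $0$, so its stable manifold and the crossing with $\{z=-\delta\}$ vary smoothly).
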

\begin{proof} 
For $\epsilon = 0$, existence, transversality, as well the bound $|g^\mathrm{s}(\theta)|\leq C\theta$ for some $C$ independent of $\eps$, follow by Lemma \ref{l:ws0} and smooth dependence of $\tilde W^\mathrm{s}(z_*,u_*,\theta)$ on $\theta$. Then, for $0<\eps\ll1$, Fenichel theory implies that the curve $S_0^*$ of saddle equilibria $(z_*,u_*,\theta)$ for $\epsilon = 0$ perturbs to a slow, normally hyperbolic invariant manifold for $0<\epsilon\ll1$ which forms the weak stable manifold of $(z_*,u_*,\theta_+)$. Also by the Fenichel theory, the manifolds $\tilde W^\mathrm{s}(z_*,u_*,\theta)$ perturb to the strong-stable fibers of $W^\mathrm{s}(z_*,u_*,\theta_+)$. The result then follows by smooth dependence on $\epsilon$.
\end{proof}

From these two results, since the curve $W^\mathrm{u}(z_+,0,\theta_-)\cap \Sigma_\delta$ is a graph over $u$ and is vertical at leading order and the curve $W^\mathrm{s}(z_*,u_*,\theta_+)\cap\Sigma_\delta$ is a graph over $\theta$, one generically expects the desired intersection to exist for sufficiently small $\epsilon$; see Figure \ref{f:ori}. We demonstrate this in the next section. Furthermore, the bifurcation delay prediction for $\theta$ can then be translated to a $\mu$-prediction for the delay 
$$
\mu_\mathrm{fr} \approx c^2/4+\theta_a(\eps),
$$
recall \eqref{e:slpass}, which will then establish \eqref{e:mufr}.


\begin{figure}[h!]
\centering
\includegraphics[trim = 0.0cm 0.0cm 0.0cm 0.0cm,clip,width=0.7\textwidth]{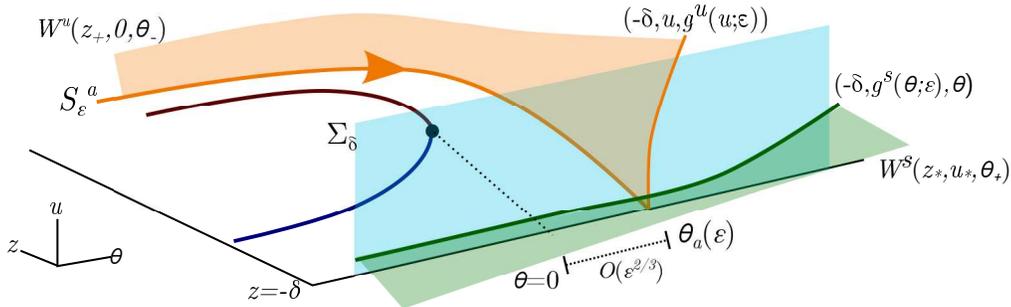}\hspace{-0.2in}
\caption{Dynamics near the origin, depicting the intersection of $W^\mathrm{u}(z_+,0,\theta_-)$  (orange) and $W^\mathrm{s}(z_*,u_*,\theta_+)$ (green) with the section $\Sigma_\delta$ (light blue) and how the transverse intersection is obtained. Intersections with $\Sigma_\delta$ are depicted as solid lines (orange and green respectively) without arrows. Red and blue curves once again depict the $\epsilon = 0$ fold curve $S_0^a\cup S_0^r$.  }\label{f:ori}
\end{figure}

\begin{Remark}
 For each fixed value of $c \in (0,2)$, Proposition \ref{p:delay} is an
asymptotic result valid for sufficiently small values of $\epsilon$. Here, we
observe that the opposite limit in which $\epsilon>0$ is fixed and $c \to 2^-$
is a different singular limit. First, with $\epsilon>0$ fixed, there is no
asymptotic time scale separation in the system (1.13)-(1.15) on $\{ u = 0
\}$ for the variables $z$ and $\theta$. More importantly, with $c=2$, the
system has a fixed point at $(z,\theta)=(0,0)$, and solutions with initial
data in that region of the fourth quadrant between the parabola $\theta =
-z^2$ and the positive $z$-axis approach that fixed point, {\it i.e.,
$\theta(\zeta) \le 0$ for all $\zeta$}. In contrast, for any value of
$c<2$, no matter how close to 2, the origin is no longer a fixed point,
and solutions with initial data in the same region approach the invariant
line $\{ \theta = 1 - \frac{c^2}{4} \}$ with $z \to -\infty$. Hence, the
limit $c \to 2^-$ is a different singular limit. In a manner similar to \cite{goh2014triggered}, we expect the absolute spectrum to once again play a role in determining the value of $\theta$ on exit from a neighborhood of the origin, and hence the location of the front interface for fixed $\epsilon>0$. We do not address this here, since
our interest in the quenching problem is for small $\epsilon$.
%
\end{Remark}


\section{Dynamics near origin: completing the proof of Theorem \ref{t:0}}\label{s:ori}

To construct the desired intersection, we use the foliation graph $h^\mathrm{uu}$ of $U_0$ to straighten the fibers, and decouple the $(z,\theta)$-dynamics from the $u$ dynamics. In these new coordinates, the unstable manifold $W^\mathrm{u}(z_+,0,\theta_-)$ is vertical, while $W^\mathrm{s}(z_*,u_*,\theta_+)$ still intersects $\Sigma_\delta$ in a graph over $\theta$. 
To begin, we use the hyperbolic dynamics normal to $U_0^r = \{ u = 0,\, z>-c/2\}$ to straighten the fibers in a neighborhood of $U_0^r$ so that the $z$ and $\theta$ equations become independent of $u$. In particular, the function $h^\mathrm{uu}$, which defines the strong-unstable foliation of $U_0^r$, 
defines a smooth coordinate change 
$$
(z_1,\theta_1) = h^\mathrm{uu}(u;z,\theta),\,\, u_1 = u.
$$
 Here, we have that $h^\mathrm{uu}(u;z,\theta) = I_2 + \tilde h^\mathrm{uu}(u;z,\theta)$ with $\tilde h^\mathrm{uu}(u;z,\theta) = \mathcal{O}(u^2)$ uniformly in $z,\theta,$ and $\epsilon$, and hence is locally invertible for $|u|<\gamma$.

By substituting $(z,\theta) = (h^\mathrm{uu})^{-1}(z_1,\theta_1)$ and $u = u_1$ 
into \eqref{e:zut1}--\eqref{e:zut3} and using the invariance property
, 
we obtain the following system:
\begin{align}
z_{1,\zeta} &= - z_1^2 -\theta_1,\label{e:z1}\\
u_{1,\zeta} &=  u_1 f_1(z_1,\theta_1,u_1;\epsilon),\qquad\qquad \label{e:u1}\\
\theta_{1,\zeta} &= \epsilon f_2(z_1,\theta_1,u_1;\epsilon)\label{e:t1}
\end{align}
for smooth functions $f_1, f_2$ with 
$$
f_1(z_1,\theta_1,0;\epsilon) = z_1+c/2,\qquad f_2(z_1,\theta_1,0;\epsilon) = 1-(\theta_1+c^2/4)^2,
$$
and $f_i(z_1,\theta_1,u;\eps) - f_i(z_1,\theta_1,0;\eps) 
= O(u^2)$ as $u\rightarrow0$.
Note the dynamics on $U_0^r$ are left unchanged. In a neighborhood of $U_0^r$, the manifolds $W^\mathrm{u}(z_{1,+},0,\theta_{1,-})$ and $W^\mathrm{s}(z_{1,*},u_{1,*},\theta_{1,+})$ can be described 
by the dynamics of the base points in $U_0^r$ of the fibers which they intersect.

Next, in view of Lemma \ref{l:wuep}, the unstable manifold is now vertical, 
$$
W^\mathrm{u}(z_+,0,\theta_-) = \bigcup_{(z_1,\theta_1)\in S_\epsilon^a} \{(z_1,u_1,\theta_1)\,:\,  \,\, |u_1|\leq \gamma\}.
$$
Thus, in \eqref{e:z1}-\eqref{e:t1},
\beq\label{e:wu_g}
W^\mathrm{u}(z_+,0,\theta_-)\cap\Sigma_\delta = \{ (-\delta,u,\theta_a(\epsilon))\,:\, |u|\leq \gamma\},
\eeq
where we recall that $\theta_a(\epsilon)$ is the intersection of $S_\epsilon^a$ with $z_1 = -\delta$ defined in \eqref{e:slpass} (and which is unchanged in these new coordinates since $u = 0$).

Furthermore, in view of Lemma \ref{l:wsep}, we can also conclude that in the new coordinates
\beq\label{e:ws_g}
W^\mathrm{s}(z_*,u_*,\theta_+)\cap\Sigma_\delta  = \{(-\delta,\tl g^\mathrm{s}(\theta;\epsilon),\theta)\},
\eeq
with $|\tl g^\mathrm{s}(\theta;\epsilon)|\leq C(\theta+\epsilon)$, since the fibers $\mc{F}^\mathrm{uu}$ vary quadratically in $u$.  
Hence, we seek intersections of the curves described in \eqref{e:wu_g} and \eqref{e:ws_g}. Equating the two curves, we obtain the matching equations
\begin{align}
u &= \tl g^\mathrm{s}(\theta;\epsilon),\\
\theta_a(\epsilon) &= \theta,
\end{align}
where $u$ and $\theta$ are free in $(-\gamma,\gamma)$ and $[0,\eta)$, respectively. Hence, for any $\epsilon\in(0,\epsilon_0)$, we choose $\theta = \theta_a(\epsilon)$ and  $u = \tl g^\mathrm{s}(\theta_a(\epsilon);\epsilon)$, to conclude the desired intersection. We note that at the intersection location $\theta_a(\epsilon)$ the $u$ coordinate is $\mc{O}(\epsilon^{2/3})$. A standard finite-time argument shows that the additional delay in $\theta$ needed for $u(\zeta) = \sqrt{\mu_c}/2 = c/4$ is then $\mathrm{o}(\epsilon^{2/3})$ and thus higher-order.  This completes the proof of the theorem.

\section{Stationary fronts: geometric desingularization analysis}\label{s:c0}
In this section,
we begin the proof of Theorem \ref{t:2}. That is we study fronts created by a stationary quench,
which solve 
\eqref{e:tw0a}-\eqref{e:tw0b}
with $c=0$,
\begin{align}
\label{e:c0a}
u_{\xi} &= v, \\
\label{e:c0b}
v_{\xi} &= -\mu u + u^3, \\
\label{e:c0c}
\mu_{\xi} &= -\eps(1-\mu^2), \qquad \mu(0)=0.
\end{align}
Here, $\xi=x-ct$ reduces to 
$\xi=x$. We first note system \eqref{e:c0a}-\eqref{e:c0c}
is invariant under the reflection 
$(u,v,\mu)$ to $(-u,-v,\mu)$. We then note that for $\eps=0$, 
the system \eqref{e:c0a}-\eqref{e:c0c} has normally hyperbolic manifolds
which are curves of saddle equilibria 
\begin{align}
S_0^{\pm} &= \{ (u,v,\mu) = (\pm\sqrt{\mu},0,\mu), \mu>\tilde{\eta} \} \\
S_0^0 &= \{ (u,v,\mu) = (0,0,\mu), \mu<-\tilde{\eta} \},
\end{align}
where $\tilde{\eta}>0$ is small
and independent of $\eps$.
We examine these critical manifolds 
for $\eps=0$, as well as the perturbed slow manifolds
which exist for $0<\eps\ll 1$ by Fenichel theory,
in the four-dimensional extended system
\begin{align}
u_\xi &= v\label{e:uvmea}\\
v_\xi&= -\mu u + u^3\label{e:uvmeb}\\
\mu_\xi&= -\epsilon (1 - \mu^2)\label{e:uvmec}\\
\epsilon_\xi&=0. \label{e:uvmed}
\end{align}
We denote the family of such perturbed slow manifolds as $S^\pm_\epsilon$ and $S^0_\epsilon$ and the union of them for $\epsilon\geq0$ small as $M^{\pm}$ and $M^0$. These correspond to center-like manifolds in the extended system.
As mentioned above, for each $\epsilon$-slice, $S^\pm_\epsilon$ forms part of the unstable manifold of the equilibria $(u,v,\mu) = (\pm 1,0,1)$ while $S^0_\epsilon$ forms part of the unstable manifold $W^\ru(0,0,1)$ for $\mu>\tl\eta$ and part of the stable manifold $W^\rs(0,0,-1)$ for $\mu<-\tl\eta$. These manifolds give the base points of fibers which foliate the manifolds they live in. For example $S_\epsilon^+$ serves as the base points of strong stable/unstable fibers which foliate $W^{\rs/\ru}(1,0,1)$. Hence, we wish to use the slow manifolds to track the containing invariant manifolds and construct the desired heteroclinic intersection.

We wish to track the perturbed slow manifolds
through a neighborhood 
of $(u,v,\mu) = (0,0,0)$,
where they lose normal hyperbolicity,
using the quasi-homogeneous geometric blow up
\begin{equation}\label{e:barcoor}
u = r \overline{u}, \quad v = r^2 \overline{v}, \quad \mu = r^2 \overline{\mu},\quad \epsilon = r^3 \overline{\epsilon}.
\end{equation}
These coordinates blow up 
the origin $(0,0,0,0)$ 
into a 3-sphere 
$ S_3 = \{r = 0, \overline{u}^2+\overline{v}^2+\overline{\mu}^2+\overline{\eps}^2 = 1\}$,
which is invariant under the induced flow.
In particular, 
it is natural 
to study the dynamics on and near the sphere
using the following three charts defined by ${\bar \mu}=1, {\bar \eps}=1, $ and ${\bar \mu}=-1$,
respectively:
\begin{align}
{\rm Entry} \, {\rm chart} \, &K_1: \quad
u=r_1 u_1,
v=r_1^2v_1,
\mu=r_1^2,
\eps=r_1^3 \eps_1 \\
{\rm Rescaling} \, {\rm chart} \, &K_2: \quad
u=r_2 u_2,
v=r_2^2v_2,
\mu=r_2^2\mu_2,
\eps=r_2^3 \\
{\rm Exit} \, {\rm chart} \, &K_3: \quad
u=r_3 u_3,
v=r_3^2v_3,
\mu=-r_3^2,
\eps=r_3^3 \eps_3.
\end{align}
Here, $x_i$ denotes the variable
${\bar x} \in \{ {\bar u}, {\bar v}, {\bar \mu}, {\bar \eps} \}$
in chart $K_i$. The change of coordinate map $\kappa_{12}$ between the charts $K_1$ and $K_2$, as well the map $\kappa_{23}$ between $K_2$ to $K_3$ are given as 
\begin{align}
&\kappa_{12}\,\,:\,\, u_2 = \epsilon_1^{-1/3}u_1,\quad v_2 = \epsilon_1^{-2/3}v_1,\quad \mu_2 = \epsilon_1^{-2/3},\quad r_2 = \epsilon_1^{1/3}r_1, \qquad\qquad \epsilon_1>0\\
&\kappa_{23}\,\,:\,\, u_3 = (-\mu_2)^{-1/2}u_2,\quad v_3 = (-\mu_2)^{-1} v_2,\quad \epsilon_3 = (-\mu_2)^{-3/2},\quad r_3 = r_2 (-\mu_2)^{1/2},\qquad\qquad \mu_2 < 0.
\end{align}
We remark that the second mapping above, $\kappa_{23}$, maps into the exit chart where $\bar\mu<0$.  We next collect information about the phase portrait near the sphere $\{r = 0\}$ in each coordinate chart, first describing the entry and exit charts $K_1,K_3$ and then the re-scaling chart $K_2$.  

\paragraph{Entry chart $K_1$ phase portrait} 
In chart $K_1$,
the governing equations are
\begin{align}
u_1' &= v_1 + \frac{1}{2} \eps_1 u_1 (1-r_1^4) \label{e:k1a}\\
v_1' &= -u_1 + u_1^3 + \eps_1 v_1 (1-r_1^4) \label{e:k1b}\\
\eps_1' &= \frac{3}{2}\eps_1^2 (1-r_1^4) \label{e:k1c}\\
r_1' &=-\frac{1}{2}r_1\eps_1 (1-r_1^4). \label{e:k1d}
\end{align}
Here, we recall that $K_1$ is defined by $\mu_1=1$,
and we have introduced the new time variable
$\xi_1=r_1\xi$
to desingularize the vector field,
with the prime now denoting the derivative with respect to $\xi_1$. We note that the system is autonomous so that the reparametrization of solutions
leaves the trajectories in phase space intact.
The system \eqref{e:k1a}-\eqref{e:k1d} has fixed points
at $p^-=(-1,0,0,r_1)$,
$p^0=(0,0,0,r_1)$,
and $p^+=(1,0,0,r_1)$
for each $r_1 \ge 0$.
These are exactly the points
at which the invariant manifolds
$S_0^-$,
$S_0^0$,
and $S_0^+$,
respectively,
enter the neighborhood of the blown-up singularity. The equilibrium $p_+$, and indeed each equilibrium in $S^+_0$, has one-dimensional stable and unstable eigenspaces contained in the $(u_1,v_1)$ plane and two center directions, one in the $r_1$ direction, tangential along $S_0^+$, and the other given by the generalized eigenvector $(0,1,-2/(1-r_1^4),0)$ for $r_1>0$ and the eigenvector $(0,1,-2,0)$ for $r_1 = 0$ (note the former center direction corresponds to the family of equilibria formed by $S^+_0$).  Thus, $S_0^+$ lies inside of a two-dimensional center manifold $\mathcal{M}^{c,+}$ which, in the original extended system \eqref{e:uvmea} - \eqref{e:uvmed}, corresponds to the family $M^+$ of slow manifolds for $\epsilon$ small. Due to the strong stable and unstable directions in the $(u_1,v_1)$ directions, $\mc{M}^{c,+}$ is normally hyperbolic with strong stable and unstable foliations. The union of the strong unstable fibers forms a center-unstable manifold $\mathcal{M}^{\rcu,+}$ which corresponds to $W^\rcu(1,0,1)$ in the original coordinates. In $K_1$, $\mathcal{M}^{\rcu,+}$  contains the set of all bounded solutions as $\xi_1\rightarrow-\infty$.

Furthermore,
the hyperplane $\{ r_1=0 \}$ is an invariant set,
and on it the dynamics 
reduce to
\begin{align}
u_1' &= v_1 + \frac{1}{2} \eps_1 u_1 \nonumber\\
v_1'&= -u_1 + u_1^3 + \eps_1 v_1 \nonumber \\
\eps_1'&= \frac{3}{2}\eps_1^2 \nonumber \\
r_1'&= 0. \nonumber
\end{align}
Hence, standard center manifold theory directly implies that, when restricted to $\{r_1 = 0\}$,
$p^\pm$ have one-dimensional normally hyperbolic center manifolds, $N_1^\pm$. Moreover, these are not unique due to the presence of both hyperbolic repelling and attracting dynamics in the $(u_1,v_1)$ plane. Note also that $p^0$ has a one-dimensional normally elliptic center manifold given by $(u_1,v_1,\eps_1)=(0,0,\eps_1)$.

We focus on $p^+$ and $N_1^+$
for the heteroclinic here.
By standard center manifold theory,
$N_1^+$ is tangent at $p^+$
to the center eigendirection spanned by
$(0,-1,2)$.
Asymptotically, it is represented by
\begin{align}
u_1 &= 1 - \frac{\eps_1^2}{8} - \frac{73}{128} \eps_1^4 + \mathcal{O}(\eps_1^6) \nonumber \\
v_1 &= - \frac{\eps_1}{2} - \frac{5}{16} \eps_1^3 
- \frac{803}{256}\eps_1^5 + \mathcal{O}(\eps_1^7).\label{e:n1p}
\end{align}
This follows from applying the invariance condition,
and we recall that 
all center manifolds in the family
have the same expansion in powers of small $\eps_1$.
See Figure \ref{f:c0bu}.

\paragraph{Exit Chart $K_3$}
The phase portrait in $K_3$ near $r = 0$ can be derived in a similar way. The governing equations 
are
\begin{align}
u_3' &= v_3 - \frac{1}{2} \eps_3 u_3 (1-r_3^4) \label{e:k3a}\\
v_3' &= u_3 + u_3^3 - \eps_3 v_3 (1-r_3^4) \label{e:k3b}\\
\eps_3' &= -\frac{3}{2}\eps_3^2 (1-r_3^4) \label{e:k3c}\\
r_3' &=\frac{1}{2}r_3\eps_3 (1-r_3^4). \label{e:k3d}
\end{align}
This system has a curve of equilibria $S_0^0 = \{(0,0,0,r_3),\, r_3\geq0\}$, each of which have strong stable/unstable directions in the $(u_3,v_3)$ plane. $S_0^0$ also lies inside a two-dimensional center manifold $\mathcal{M}^{\mathrm{c},0}$ tangent to the $(\epsilon_3,r_3)$ plane. Here one such center manifold is given by the plane $\{(0,0,\epsilon_3,r_3)\,:\, \epsilon_3,r_3\geq0\}$.  This manifold is once again normally hyperbolic with one-dimensional strong stable and unstable fibers. The union of stable fibers gives a local description of a center-stable manifold $\mathcal{M}^{\mathrm{cs},0}$ which corresponds locally to $W^\mathrm{cs}(0,0,-1)$. Similarly to $K_1$, the $r_3  = 0$ plane is invariant with the reduced system
\begin{align}
u_3' &= v_3 - \frac{1}{2} \eps_3 u_3 \nonumber\\
v_3'&= u_3 + u_3^3 - \eps_3 v_3 \nonumber \\
\eps_3'&= -\frac{3}{2}\eps_3^2 \nonumber \\
r_3'&= 0. \nonumber
\end{align}
Here we find the trivial center manifold $N_3^0$ given by $(0,0,\epsilon_3)$ for $\epsilon_3\geq0$.  

Hence, by tracking manifolds across the rescaling chart, we wish show that the three-dimensional manifolds $\mathcal{M}^{\mathrm{cs},0}$ and $\mathcal{M}^{\rcu,+}$ have a two-dimensional intersection, with one direction corresponding to variation in $\epsilon$ and the other the direction of the flow.
\paragraph{Rescaling Chart $K_2$}
Finally, we work in the rescaling chart $K_2$ 
to identify the geometrically unique solution
that represents the desired heteroclinic 
in the blown-up vector field.
In $K_2$,
system \eqref{e:uvmea}-\eqref{e:uvmed} becomes
\begin{align}
u_2' &= v_2\label{e:k2a}\\
v_2'&= -\mu_2 u_2 + u_2^3\label{e:k2b}\\
\mu_2'&= -1 + r_2^4\mu_2^2\label{e:k2c}\\
r_2'&=0. \label{e:k2d}
\end{align}
Here, the prime denotes 
the derivative with respect to 
the new time variable
$\xi_2 = r_2 \xi$. 
We focus on the dynamics of this system
on the invariant set $\{ r_2=0\}$,
where the system reduces to
\begin{align}
u_2' &= v_2\label{e:k2ar0}\\
v_2'&= -\mu_2 u_2 + u_2^3\label{e:k2br0}\\
\mu_2'&= -1 \label{e:k2cr0}\\
r_2'&=0. \label{e:k2dr0}
\end{align}
Then, by converting
the $(u_2,v_2)$ subsystem into a second-order scalar equation,
scaling $u_2 = \sqrt{2}\tilde{u}_2$,
and recalling that $\mu_\xi = -\epsilon(1-\mu^2)$ so that $\mu_2=-\xi_2$
 on $\{ r_2 = 0 \}$,
we find that the governing equation
on $\{ r_2=0\}$ is 
\begin{equation}\label{e:pii}
\tilde{u}_2'' = \xi_2 \tilde{u}_2 + 2\tilde{u}_2^3.
\end{equation}
This is precisely the second Painlev\'e equation
(${\rm P}_{\rm II}$), 
recall \eqref{e:PII-intro}.
Note that the scaling used here 
to derive $\tilde{u}_2(\xi_2)$
is the same as that used in Section \ref{ss:hm}
for $\tilde{u}(\eta)$ and $w(\eta)$
since $r_2=\eps^{1/3}$ in $K_2$.

Now, as previewed above
while deriving the formal asymptotics,
the key solution of \eqref{e:pii}
that is of interest here 
is the Hastings and McLeod solution,
$w_{\rm HM}$ of \eqref{e:PII-intro},
which we denote here
by $\tilde{u}_2^*(\xi_2)$.
It is the unique solution 
which satisfies 
the asymptotic boundary conditions
$$
\tilde{u}_2^*(\xi_2)\sim\sqrt{-\xi_2/2}, \,\,\xi_2\rightarrow-\infty,\qquad
\tilde{u}_2^*(\xi_2)\sim \mathrm{Ai}(\xi_2),\,\, \xi_2\rightarrow +\infty,
$$
and which decays strictly monotonically.
Finally, scaling back to $u_2$, 
this yields the unique 
monotonically decaying solution 
$u_2^*(\xi_2)$
of \eqref{e:k2ar0}-\eqref{e:k2dr0}
with the asymptotics 
\begin{align}
u_2^*(\xi_2) &\sim \sqrt{-\xi_2} =\sqrt{\mu_2}, \,\,\xi_2\rightarrow-\infty, 
\label{e:u2stara}\\
u_2^*(\xi_2) &\sim \sqrt{2}\mathrm{Ai}(\xi_2),\,\, \xi_2\rightarrow +\infty.
\label{e:u2starb}
\end{align}

\section{Singular heteroclinic connection on the sphere, transversality}\label{s:singhet}
On the blow-up sphere, $\{r = 0\}$, the Hastings-Mcleod solution $u_2^*$ represents a heteroclinic solution connecting the equilibria $p^+$ on the $\bar\mu>0$ hemisphere to the equilibria $p^0$ in the $\bar \mu<0$ hemisphere. Below, we find that in the charts $K_1$ and $K_3$ this unique connecting orbit gives a 1-D center manifold in the $r_j = 0$ invariant subspaces in chart $K_j$ for both $j = 1,3$.  We thus use this heteroclinic orbit to transport the center unstable manifold $\mc{M}^{cu,+}$ from $K_1$ across the sphere to locate an intersection with the center-stable manifold $\mc{M}^{cs,0}$ in $K_3$. To address the non-uniqueness of the center manifolds in $K_1$ and $K_3$, we first construct an intersection between the local 3-D center unstable manifold of the equilibrium $p^+$ which contains the 1-D center manifold $\kappa_{21}^{-1}u_2^*$ in $K_1$ and the local 3-D center stable manifold of the equilibrium $p^0$ which contains the 1-D center manifold $\kappa_{23}u_2^*$. We do this in order to flow these invariant manifolds globally across $K_2$ using the variational dynamics around $u_2^*$.  We then use inclination properties of the flow in each chart to conclude the same transversality and intersection properties for the center unstable/stable manifolds $\mc{M}^{cu,+}$, $\mc{M}^{cs,0}$.

  Using the inverse coordinate change $\kappa_{12}^{-1}:K_2\rightarrow K_1$, given by $u_1=u_2 \mu_2^{-1/2}, v_1=v_2\mu_2^{-1}, \eps_1=\mu_2^{-3/2},$ and $r_1=r_2 \mu_2^{1/2}$, we can translate the asymptotics of $u_2^*$ into the variables of chart $K_1$. We find that, when flowed back through the entry chart coordinates $K_1$, the solution $\kappa_{12}^{-1}u_2^*(\xi_2)$ asymptotically approaches $p^+ = (1,0,0,0)$ as $\xi_1\to -\infty$, and it lies on a center manifold, $N_1^+$, of this equilibrium.  In fact, the higher order terms in the asymptotic expansion of the Hastings-McLeod solution as $\xi_2 \to -\infty$, given by
\begin{align}
u_2^*(\xi_2) = \sqrt{-\xi_2}\left(1+ \frac{1}{8\xi_2^3} - \frac{73}{128\xi_2^6} + \frac{10219}{1024 \xi_2^9} +\mathcal{O}(\xi_2^{-12})\right),
\end{align}
(see for example \cite{deift95,baik08}, and also \cite{Cleri_2020} for the full trans-series
asymptotics) also agree with the higher-order terms in the expansion of $N_1^+$; recall \eqref{e:n1p}.

In a similar manner, using the coordinate change $\kappa_{23}$, we can translate the asymptotics of $u_2^*$ as $\xi_2\rightarrow+\infty$, given in \eqref{e:u2starb}, into the $K_3$ variables. We find the set $\kappa_{23}u_2^*$ is a 1-D center manifold of the equilibrium $p^0$ in the $\{ r_3 = 0\}$ invariant subspace. Indeed, using the coordinate transform $\xi_2 = -\mu_2 = (\epsilon_3^{2/3})$ the leading order expansion for the Airy functions
\begin{align}
\mathrm{Ai}(\xi) &= \frac{\re^{-\frac{2}{3}\xi^{3/2}}}{2\sqrt{\pi}\xi^{1/4}}\left( 1 + \mathcal{O}(\xi^{-3/2}) \right),\\
\mathrm{Ai}'(\xi) &= -\frac{\xi^{1/4}\re^{-\frac{2}{3}\xi^{3/2}}}{2\sqrt{\pi}}\left( 1 + \mathcal{O}(\xi^{-3/2})\right),
\end{align}
we have the following asymptotic description of the trajectory in $K_3$
\begin{align}
u_3 &= \frac{\exp(-\frac{2}{3} \epsilon_3^{-1}) }{\sqrt{2\pi}}\left( \epsilon_3^{1/2}  + \mathcal{O}(\epsilon_3^{3/2}) \right),\label{e:u3wc}\\
v_3 &=  -\frac{\sqrt{2}\exp(-\frac{2}{3} \epsilon_3^{-1}) }{\sqrt{2\pi}}\left(\epsilon_3^{1/2}  + \mathcal{O}(\epsilon_3^{3/2}) \right)\label{e:v3wc}.
\end{align}
Thus, this trajectory approaches $p^0$ tangentially along the center direction formed by the $\epsilon_3$-axis.

As described above, the equilibria $p^+$ and $p^0$ each have 1-D strong stable and strong unstable subspaces, along with 2-D center spaces. We let $W^{\rcu}_1(p^+)$ denote the 3-D local center-unstable manifold of $p^+$ in $K_1$ which contains $\kappa_{12}^{-1}u_2^*$ and let $W^{\rcs}_3(p^0)$ be the 3-D local center-stable manifold of $p^0$ in $K_3$ which contains $\kappa_{23}u_2^*$. Furthermore, we let $W^{\rcu}_2(p^+)$ and  $W^{\rcs}_2(p^0)$ denote the above manifolds in the $K_2$ coordinates.  These manifolds can be continued along a neighborhood of the connecting solution $u_2^*$ using the flow of the $K_2$ dynamics.

\begin{figure}[h!]
\centering
\hspace{-0.2in}
\includegraphics[trim = 0.0cm 0.0cm 0.05cm 0.0cm,clip,width=0.7\textwidth]{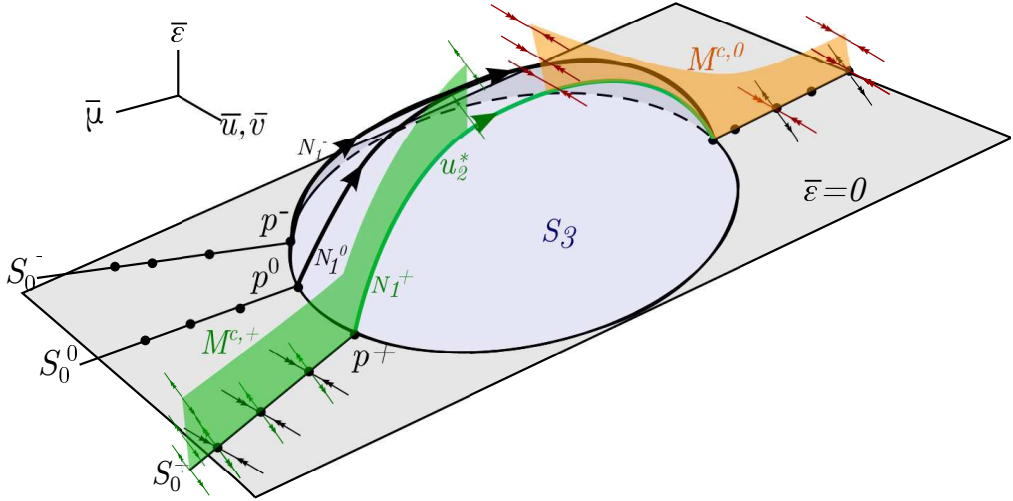}\hspace{-0.2in}
\caption{Schematic depiction of 4-D blown-up phase space near the blow-up sphere $S_3$ (light blue) in the coordinates \eqref{e:barcoor}. The singular heteroclinic $u_2^*$ on $S_3$ connecting $p_+$ in chart $K_1$ to $p^0$ in chart $K_3$  is depicted in green.  Near the equilibria $p_+$ this curve also gives the center manifold $N_1^+$ described in \eqref{e:n1p}.  Critical equilibria curves $S_0^\pm,S_0^0$ for $r\geq0$ lying inside the $\bar\epsilon = 0$ plane (grey) are given by black lines with dots. The green and orange surfaces respectively denote the 2-D center manifolds $\mathcal{M}^\mathrm{c,+}$ and $\mathcal{M}^\mathrm{c,0}$ described in Section \ref{s:c0}, and the double-arrowed green and red curves denote 1-D strong unstable and stable fibers. The desired intersection is given for $0<\epsilon \ll1$ by the intersection of the union of $\mathcal{M}^\mathrm{c,+}$ and its strong unstable fibers with $\mathcal{M}^\mathrm{c,0}$ and its strong stable fibers.   }\label{f:c0bu}
\end{figure}

We wish to show that the invariant manifolds $W^{\rcu}_1(p^+)$ and $W^{\rcs}_3(p^0)$, globally continued across the sphere intersect transversely with two dimensional intersection containing $u_2^*$. To do this, we track them both in the rescaling chart $K_2$ in a neighborhood of $u_2^*$ using the associated variational equation. In particular, letting $U_2^* = (u_2^*,v_2^*,\mu_2^*,0)^T$, and $F(U)$ denote the 4-D vector-field defined in \eqref{e:k2a} - \eqref{e:k2d}, we insert the solution decomposition $U = U_2^* + W,\quad W = (w_1,w_2,w_3,w_4)^T\in \R^4$, into the nonlinear system, obtaining
\begin{align}\label{e:nlvar}
W' &= A_2(\xi_2) W + G(\xi_2,W),\\
&A_2(\xi_2) = DF(U_2^*(\xi_2)),\qquad G(\xi_2,W) = F(U_2^*(\xi_2) + W) - F(U_2^*(\xi_2)) - DF(U_2^*(\xi_2)) W.\notag
\end{align}
Here $A_2$ takes the 2x2 block form
$$
A_2(\xi_2) = \left(\begin{array}{cc}A_{2,0}(\xi_2) & A_{2,1}(\xi_2) \\0_2 & 0_2\end{array}\right), \quad 
A_{2,0}(\xi_2) = \left(\begin{array}{cc}0 & 1 \\\xi_2+3(u_2^*)^2 & 0\end{array}\right),\quad
A_{2,1}(\xi_2) = \left(\begin{array}{cc}0 & 0 \\-u_2^* & 0\end{array}\right),\quad
$$
where $0_2$ denotes the 2x2 zero matrix.

We study the evolution of the tangent spaces of the desired invariant manifolds along $u_2^*$ using the linear variational equation 
\begin{align}\label{e:lv}
W' = A_2(\xi_2) W. \quad 
\end{align}
Such tangent spaces can be studied using exponential trichotomies \cite{sell2002dynamics} to track not only hyperbolic, but also center dynamics about $u_2^*.$ We readily observe that the $w_3$ and $w_4$ directions, corresponding to the $\epsilon_2$ and $r_2$ directions, are constant. Due to the upper diagonal element coupling $w_3$ to $w_2$, the subspace spanned by these directions is not invariant. We do note that the $w_4$ direction is invariant, and spans one dimension of the center bundle. Also, the $w_3 = w_4 = 0$ subspace is invariant and contains the hyperbolic dynamics on both $\R_\pm$. Using the asymptotics of $u_2^*$, one can obtain the following result
\begin{Lemma}\label{l:exp3}
The system \eqref{e:lv} possesses exponential trichotomies  $\R^4 = E^{\rs,\pm}(\xi_2)\oplus E^{\ru,\pm}(\xi_2)\oplus E^{c,\pm}(\xi_2)$ on both $\R_\pm$, with $E^{\rs/\ru,\pm}(\xi_2)$ contained in the $(w_1,w_2)$ subspace and $(0,0,0,1)^T\in  E^{c,\pm}(\xi_2)$ for all $\xi_2$. 
\end{Lemma}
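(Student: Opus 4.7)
The plan is to exploit the block-upper-triangular structure of $A_2(\xi_2)$. Since $w_3'=0$ and $w_4'=0$, both components are constants of motion, so the subspace $\{w_3=w_4=0\}$ is invariant and the restricted dynamics reduce to the scalar second-order linearization of Painlev\'{e} II about $u_2^*$,
\[
w_1'' = V(\xi_2)\, w_1, \qquad V(\xi_2) := \xi_2 + 3(u_2^*)^2.
\]
Using the asymptotics of $u_2^*$ from Section \ref{s:singhet}, one has $V(\xi_2)\sim \xi_2$ as $\xi_2\to+\infty$ and $V(\xi_2)\sim -2\xi_2$ as $\xi_2\to-\infty$, so $V$ is unbounded and positive at both ends. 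Standard WKB/exponential-dichotomy arguments for Schr\"{o}dinger-type operators with monotonically growing potential then yield an exponential dichotomy of this scalar equation on each half-line $\R_\pm$, with characteristic rates of order $\exp(\pm\tfrac{2}{3}\xi_2^{3/2})$ at $+\infty$ and $\exp(\pm\tfrac{2\sqrt{2}}{3}|\xi_2|^{3/2})$ at $-\infty$. Lifting the resulting one-dimensional stable and unstable subspaces to $\R^4$ produces the bundles $E^{\rs,\pm}(\xi_2)$ and $E^{\ru,\pm}(\xi_2)$, both contained in $\{w_3=w_4=0\}$, as required.

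It remains to construct a two-dimensional center bundle $E^{c,\pm}(\xi_2)$. One direction is immediate: $(0,0,0,1)^T$ is a constant solution, so this vector always lies in $E^{c,\pm}$. For the second direction, I look for a bounded solution with $w_3\equiv 1$, which amounts to finding a bounded particular solution of $w_1'' = V(\xi_2)\,w_1 - u_2^*$. Differentiating the Painlev\'{e} relation $(u_2^*)'' = \xi_2 u_2^* + (u_2^*)^3$ with respect to $\xi_2$ gives $\phi'' = V(\xi_2)\phi + u_2^*$ with $\phi := (u_2^*)'$, so $w_1 = -(u_2^*)'$ works and
\[
\Psi(\xi_2) := \bigl(-(u_2^*)'(\xi_2),\ -(u_2^*)''(\xi_2),\ 1,\ 0\bigr)^T
\]
is a solution of \eqref{e:lv}. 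By the Airy asymptotics at $+\infty$ and by differentiating the expansion $u_2^*\sim\sqrt{-\xi_2}\bigl(1+\tfrac{1}{8\xi_2^3}+\cdots\bigr)$ at $-\infty$, both $(u_2^*)'$ and $(u_2^*)''$ remain bounded on all of $\R$ (indeed decaying exponentially at $+\infty$ and algebraically, like $|\xi_2|^{-1/2}$ and $|\xi_2|^{-3/2}$, at $-\infty$). Hence $\operatorname{span}\bigl\{\Psi(\xi_2),\,(0,0,0,1)^T\bigr\}$ is a two-dimensional invariant subspace on which the flow grows at most polynomially, and provides the desired $E^{c,\pm}(\xi_2)$.

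Putting these pieces together yields the decomposition $\R^4 = E^{\rs,\pm}(\xi_2)\oplus E^{\ru,\pm}(\xi_2)\oplus E^{c,\pm}(\xi_2)$ with the inclusions demanded by the statement, and the dichotomy rates of the $(w_1,w_2)$-subsystem dominate the at-most-polynomial growth on the center bundle, giving a bona fide trichotomy on each $\R_\pm$. The only genuinely analytic step is the first one, namely establishing the exponential dichotomy for the scalar linearized Painlev\'{e} II equation with unbounded potential; this is classical (see \cite{sell2002dynamics} for the general framework, together with standard WKB comparison estimates for the concrete rates) and constitutes the main, but not serious, obstacle. Once the scalar dichotomy is in hand, the block-triangular form of $A_2$ and the explicit formula for $\Psi$ built from $(u_2^*)'$ make the trichotomic splitting essentially automatic.
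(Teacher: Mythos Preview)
Your proof is correct and follows the same overall architecture as the paper: use the block-upper-triangular form of $A_2$ to reduce the hyperbolic part to the scalar linearized Painlev\'e II equation, invoke its Airy-type asymptotics at both ends to get the dichotomy on $\{w_3=w_4=0\}$, and then build the two-dimensional center bundle by hand.

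The genuine difference is in how you construct the second center direction. The paper treats $\R_+$ and $\R_-$ separately: on $\R_+$ it argues that the coupling $-u_2^* w_3$ is an exponentially decaying perturbation and appeals to roughness; on $\R_-$ it invokes an abstract variation-of-constants argument for the forced problem $W_h'=A_{2,0}W_h+(0,-u_2^* w_3)^T$ with $W_h(0)=0$. You instead observe that differentiating the Painlev\'e II relation gives an explicit bounded particular solution $w_1=-(u_2^*)'$, yielding the single closed-form center vector $\Psi(\xi_2)=\bigl(-(u_2^*)',-(u_2^*)'',1,0\bigr)^T$ valid on all of $\R$. This is more explicit and unified than the paper's argument, and it makes the boundedness of the center bundle transparent from the known asymptotics of $u_2^*$. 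The paper's route, by contrast, is the off-the-shelf one: it requires no special identity and would work verbatim for any equation with the same asymptotic structure, whereas your trick depends on the forcing term $-u_2^*$ being exactly the derivative-of-the-base-profile term that appears when linearizing an autonomous equation about a solution of a nonautonomous one.
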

\begin{proof}
On $\R_+$, the asymptotics of $u_2^*$ given in \eqref{e:u2starb} imply that $A_2(\xi_2)$ is a localized perturbation of $ \left(\begin{array}{cc}A_{2,Ai}(\xi_2) & 0_2  \\0_2 & 0_2\end{array}\right)$, where $A_{2,Ai}(\xi_2)  = \left(\begin{array}{cc}0 & 1 \\\xi_2 & 0\end{array}\right) $. As the subsystem $W_h' = A_{2,Ai}(\xi_2)W_h,\,\, W_h = (w_1,w_2)^T$ is the first-order system formulation of a rescaled Airy equation $w_1''-\xi_2 w_1 = 0$, it has an exponential dichotomy on $\R_+$ whose stable and unstable subspaces are spanned by the linearly independent functions $w_1 = \mathrm{Ai}(\xi_2),\,\, \mathrm{Bi}(\xi_2)$. Standard roughness results then give the existence of an exponential dichotomy on $\R_+$ of the hyperbolic subsystem $W_h' = A_{2,0}(\xi_2) W_h$ of \eqref{e:lv}. Thus, since the coupling term $w_3 u_2^*$ vanishes exponentially fast for $\xi_2\rightarrow+\infty$, such roughness results also give the existence of an exponential trichotomy also for the full system. 

On $\R_-$, the hyperbolic subsystem $W_h' = A_{2,0}(\xi_2)W_h$ is an algebraically localized perturbation of another scaled Airy system. In particular, since $u_2^*(\xi_2)\sim \sqrt{-\xi_2}$, we have $A_{2,0}(\xi_2)\sim \left(\begin{array}{cc}0 & 1 \\\ -2\xi_2 & 0\end{array}\right)$, so that the corresponding system is approximated by the first order formulation of $w_1''+2\xi_2 w_1 = 0$ for $\xi_2<0$, which has two linearly independent solutions $w_1 = \mathrm{Ai}(-2^{1/3}\xi_2),\,\, \mathrm{Bi}(-2^{1/3}\xi_2)$ that again give the asymptotic stable and unstable space respectively. Roughness once again gives the existence of a dichotomy for the hyperbolic subspaces. The existence of center subspace is obtained by using the fact that $w_3$ and $w_4$ are constant, and applying a variation of constants argument to solve the following initial value problem for each $w_3$-value,
$$
W_h' = A_{2,0}(\xi_2)W_h + \left(\begin{array}{c} 0 \\ u_2^*(\xi_2) w_3\end{array}\right), \qquad W_h(0) = 0,\quad \xi_2\in \R_-.
$$
\end{proof}

\begin{Proposition}\label{p:transv}
The 1-D unstable and stable subspaces $E^{\ru,-}_2(0)$ and $E^{\rs,_+}_2(0)$ intersect transversely. That is, $\R^2 = E_2^{\ru,-}(0)\oplus E_2^{\rs,+}(0)$.
\end{Proposition}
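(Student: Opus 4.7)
The two subspaces $E^{\ru,-}_2(0)$ and $E^{\rs,+}_2(0)$ are both one-dimensional and, by Lemma \ref{l:exp3}, each lies in the two-dimensional hyperbolic subspace $\{(w_1,w_2,0,0)\}\subset\R^4$, so either $\R^2 = E^{\ru,-}_2(0)\oplus E^{\rs,+}_2(0)$ holds or else the two subspaces coincide. In the latter case there would exist a nonzero solution $\phi(\xi_2)$ of the scalar restriction $\phi'' = V(\xi_2)\phi$, with $V(\xi_2):= \xi_2 + 3(u_2^*)^2(\xi_2)$, decaying at Airy-type rates as $\xi_2\to\pm\infty$. My plan is to rule out such a $\phi$ by a Picone-type identity built from the strict positivity of the Hastings--McLeod profile $u_2^*$.

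The central observation is that $u_2^* = \sqrt{2}\,w_{HM} > 0$ everywhere and, using the Painlev\'e II equation $(u_2^*)'' = \xi_2 u_2^* + (u_2^*)^3$, a direct computation gives
\[
L u_2^* \;=\; -(u_2^*)'' + V u_2^* \;=\; 2(u_2^*)^3 \;>\; 0,
\]
where $L := -\partial_{\xi_2}^2 + V$. Hence $u_2^*$ is a strictly positive supersolution of $L\phi = 0$. Assuming for contradiction that a nonzero $\phi \in E^{\ru,-}_2(0)\cap E^{\rs,+}_2(0)$ exists, I multiply $L\phi = 0$ by $\phi$ and integrate by parts (with vanishing boundary terms) to first obtain
\[
0 \;=\; \int_\R \phi\, L\phi\, d\xi_2 \;=\; \int_\R \bigl[(\phi')^2 + V\phi^2\bigr]\, d\xi_2.
\]
Writing $\phi = q\, u_2^*$, which is well-defined since $u_2^* > 0$, a direct application of the Picone identity then transforms this into
\[
0 \;=\; \int_\R (q')^2 (u_2^*)^2\, d\xi_2 \;+\; \int_\R q^2\, u_2^*\, L u_2^*\, d\xi_2 \;=\; \int_\R (q')^2 (u_2^*)^2\, d\xi_2 \;+\; 2\int_\R q^2 (u_2^*)^4\, d\xi_2.
\]
Both integrals on the right are non-negative, so their vanishing sum forces each to vanish; combined with $u_2^* > 0$ this yields $q \equiv 0$ and hence $\phi \equiv 0$, the desired contradiction, which establishes the claimed transversality.

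The main technical point that remains is to verify carefully that all boundary terms in the two integrations by parts genuinely vanish, i.e.\ that $\phi\phi'$ and $q^2\, u_2^*\, (u_2^*)'$ tend to $0$ as $\xi_2 \to \pm\infty$. To do this I will use the asymptotics $u_2^* \sim \sqrt{-\xi_2}$ with $(u_2^*)' \sim -1/(2\sqrt{-\xi_2})$ as $\xi_2\to-\infty$, and $u_2^* \sim \sqrt{2}\,\mathrm{Ai}(\xi_2)$ as $\xi_2\to+\infty$, together with the Airy-type decay of $\phi$ at both ends inherited from its membership in $E^{\rs,+}_2(0)\cap E^{\ru,-}_2(0)$ (cf.\ Lemma \ref{l:exp3}). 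These asymptotics show that $q = \phi/u_2^*$ is bounded on $\R$ (tending to a finite limit at $+\infty$ and to $0$ at $-\infty$) and that each individual boundary contribution indeed vanishes as required.
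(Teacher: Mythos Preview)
Your argument is correct and takes a genuinely different route from the paper's. The paper invokes Schr\"odinger operator theory (Reed--Simon XIII.47) to assert that $L_0=\partial_{\xi_2}^2-(\xi_2+3(u_2^*)^2)$ has purely discrete spectrum with a sign-definite ground state $\phi_0$, then rules out $\lambda_0\ge 0$ by testing against $\partial_{\xi_2}u_2^*$: differentiating Painlev\'e~II gives $L_0\,\partial_{\xi_2}u_2^*=u_2^*$, and pairing with $\phi_0>0$ produces a sign contradiction since $\partial_{\xi_2}u_2^*<0$. Your approach instead exploits the positivity of $u_2^*$ itself as a strict supersolution ($Lu_2^*=2(u_2^*)^3>0$) and a Picone-type factorisation $\phi=q\,u_2^*$ to force $q\equiv 0$ directly. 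This is more elementary in that it avoids the Reed--Simon machinery and the shift-of-potential device the paper uses, at the cost of having to verify the boundary terms by hand; conversely, the paper's spectral argument yields the stronger conclusion $\lambda_0<0$ (not just $0\notin\sigma_{\mathrm{pt}}$) and generalises more readily to settings where no positive supersolution is available. Both proofs ultimately hinge on the strict positivity and monotonicity of the Hastings--McLeod solution, just packaged differently.
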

\begin{proof}
It suffices to consider the 2-D hyperbolic subsystem $W_h' = A_{2,0}(\xi_2) W_h$. First, we note this system is the first-order formulation of the linearized Painlev\'{e}-II equation
\begin{align}
0 = L_0w_1 := w_1'' - (\xi_2+3(u_2^*)^2)w_1.
\end{align}
so that exponentially localized eigenfunctions of the latter correspond to solutions of the former lying in the intersection $E_2^{\ru,-}(\xi_2)\cap E_2^{\rs,+}(\xi_2)$. Here $L_0$ is a $L^2$ self-adjoint operator, with closed densely-defined domain. This operator takes the form of the often-studied Schr\"{o}dinger operator $\p_{\xi_2}^2+V(\xi_2)$ with potential $V(\xi_2) = - (\xi_2 + 3(u_2^*)^2)$. Since the potential satisfies $|V(\xi_2)|\rightarrow+\infty$ as $|\xi_2|\rightarrow +\infty$, standard results \cite[Thm. XIII.47]{ReedSimonIV} give that $L_0$ has no essential spectrum and the discrete spectrum $\{\lambda_{j}\}$ satisfies $\lambda_0\geq\lambda_1\geq\lambda_2\geq...$ with $\lambda_j\rightarrow - \infty$. Such results also give for such operators that if $V(\xi_2)<0$, the ``ground-state`` eigenfunction $\phi_0$ of the eigenvalue is strictly positive $\phi_0>0$.  The asymptotics of $u_2^*$ as $|\xi_2|\rightarrow\infty$ imply that if our potential has $V(\xi_2)>0$ at some point then it has $V(\xi_2)>0$ at most on bounded interval in $\R_-$ and hence,  since $u_2^*$ is smooth, that $m = \max_{\xi_2} V(\xi_2)$ is finite. Hence, the potential of the shifted operator $L_0-(m+\delta)$, for $\delta>0$ small, is strictly negative. Thus, the ground state eigenfunction is strictly positive. 

Now, to obtain a contradiction, assume that the ground-state eigenvalue has $\lambda_0\geq0$. Then, differentiating the Painlev\'{e}-II equation $u_2''+(-\xi_2)u_2 - u_2^3 = 0$ in $\xi_2$, we obtain that
$$
L_0 \p_\xi u_2^* = u_2^*.
$$
Also, we recall that $\p_\xi u_2^*<0$. We then calculate
\begin{align}
\lambda_0 \la \phi_0, \p_\xi u_2^* \ra_{L^2} &= \la \phi_0, L_0\p_\xi u_2^* \ra_{L^2}
= \la \phi_0,u_2^* \ra_{L^2}>0,
\end{align}
which is a contradiction because $\p_\xi u_2^* \cdot \phi_0<0$ so that $\la \phi_0, \p_\xi u_2^* \ra_{L^2}<0$.  Hence we have that $\lambda_0<0$ and therefore that the hyperbolic subsystem $W_h' = A_{2,0}(\xi_2) W_h$ has no exponentially localized solution and hence the two subspaces in question must intersect trivially.
\end{proof}

\begin{Remark}
We also note that Appendix \ref{a:1} gives a rigorous proof of the negativity of the potential, $V(\xi_2) = -(\xi_2+3(u_2^*)^2) < 0$, for all $\xi_2$. This implies that the shift of the operator and results from Schr\"{o}dinger operators is not needed above. One actually need only study the numerical range $\lambda_0 \| \phi_0\|_{L^2}^2 = \la L_0 \phi_0, \phi_0 \ra_{L^2} = -\int_{R} (\p_\xi\phi_0)^2 d\xi + \int_{R} V(\xi) \phi_0^2 d\xi <0$ to infer the negativity of the ground-state eigenvalue.
\end{Remark}


Given the results of Lemma \ref{l:exp3} and Proposition \ref{p:transv} about the linear dynamics around $u_2^*$, we then can conclude the desired intersection properties of the center-unstable and center-stable manifolds around $u_2^*$.

\begin{Proposition}\label{p:cscu}
In a tubular neighborhood of $u_2^*$, the invariant manifolds $W^{\rcu}_1(p^+)$ and $W^{\rcs}_3(p^0)$ intersect transversely with two dimensional intersection containing $u_2^*$.
\end{Proposition}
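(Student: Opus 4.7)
The plan is to reduce the transverse intersection claim at $u_2^*$ to an infinitesimal statement at a single reference point, which can then be verified using the exponential trichotomy of Lemma \ref{l:exp3} together with the hyperbolic transversality of Proposition \ref{p:transv}. First, I would transport $W^{\rcu}_1(p^+)$ and $W^{\rcs}_3(p^0)$ through the coordinate changes $\kappa_{12}$ and $\kappa_{23}^{-1}$ into the rescaling chart $K_2$, yielding 3-dimensional invariant manifolds that each contain $u_2^*$. I would then extend these manifolds along $u_2^*$ via the full nonlinear flow of \eqref{e:k2a}--\eqref{e:k2d}, so that $W^{\rcu}_2(p^+)$ and $W^{\rcs}_2(p^0)$ are defined as smooth 3-manifolds in a full tubular neighborhood of $u_2^*$ in $\R^4$.

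The crucial step is the identification of tangent spaces at $u_2^*(0)$ using the variational equation \eqref{e:lv}. By flow-invariance, a tangent vector to $W^{\rcu}_2(p^+)$ corresponds to a variational solution whose backward evolution does not grow exponentially along the strong-stable direction. The trichotomy $\R^4 = E^{\rs,-} \oplus E^{\ru,-} \oplus E^{c,-}$ from Lemma \ref{l:exp3} thus yields
$$
T_{u_2^*(0)} W^{\rcu}_2(p^+) = E^{\ru,-}(0) \oplus E^{c,-}(0),
$$
and analogously $T_{u_2^*(0)} W^{\rcs}_2(p^0) = E^{\rs,+}(0) \oplus E^{c,+}(0)$; each has the expected dimension $3 = 1+2$.

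Transversality then amounts to the splitting $\R^4 = \bigl(E^{\ru,-}(0) \oplus E^{c,-}(0)\bigr) + \bigl(E^{\rs,+}(0) \oplus E^{c,+}(0)\bigr)$. The block-triangular structure of $A_2(\xi_2)$ confines both hyperbolic subspaces to the $(w_1,w_2)$-plane, and Proposition \ref{p:transv} gives $E^{\ru,-}(0) \oplus E^{\rs,+}(0) = \R^2_{(w_1,w_2)}$. Because $w_3$ and $w_4$ are conserved along solutions of \eqref{e:lv}, both 2-dimensional center subspaces must project surjectively onto $\R^2_{(w_3,w_4)}$, so the sum exhausts $\R^4$. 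A dimension count then forces the intersection to be 2-dimensional; it contains both the flow vector $F(U_2^*)$, which is a bounded variational solution and therefore lies in $E^{c,\pm}$, and the constant direction $(0,0,0,1)^T$, so the manifolds meet transversely along a 2-surface through $u_2^*$. The main technical obstacle is justifying the tangent-space identification: one must argue, via a Lyapunov--Perron graph representation of $W^{\rcu}_2(p^+)$ near $u_2^*$, that the subspace $E^{\ru,-}(0) \oplus E^{c,-}(0)$ genuinely integrates to the tangent space of the center-unstable manifold transported from $K_1$, which requires tracking the center dynamics near $p^+$ consistently through $\kappa_{12}$.
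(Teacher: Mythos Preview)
Your proposal is correct and follows essentially the same strategy as the paper: both identify the tangent spaces of $W^{\rcu}_2(p^+)$ and $W^{\rcs}_2(p^0)$ along $u_2^*$ with the center-unstable and center-stable bundles $E^{\ru,-}\oplus E^{c,-}$ and $E^{\rs,+}\oplus E^{c,+}$ from Lemma~\ref{l:exp3}, and both invoke Proposition~\ref{p:transv} for the hyperbolic splitting in the $(w_1,w_2)$-plane together with the surjective projection of the center bundles onto $(w_3,w_4)$. The only presentational difference is that the paper restricts to the three-dimensional Poincar\'e section $\tilde\Sigma_2=\{\mu_2=0\}$, writes each manifold as a graph over $E^{\ru,-}\oplus\mathrm{span}\{e_4\}$ (resp.\ $E^{\rs,+}\oplus\mathrm{span}\{e_4\}$), and closes with an Implicit Function Theorem argument to produce the one-parameter family of intersections in $r_2$, whereas you verify transversality of tangent spaces directly in $\R^4$; these are equivalent, and your direct linear-algebra argument is slightly cleaner while the paper's section-and-IFT packaging makes the $r_2$-parametrization of the intersection more explicit for the subsequent $\eps$-unfolding.
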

\begin{proof}
First, we observe that the variational equation \eqref{e:nlvar} and the exponential trichotomies on $\R_\pm$ can be used to construct and continue the manifolds $W^{\rcu}_1(p^+)$ and $W^{\rcs,}_3(p^0)$ in a neighborhood of $u_2^*$ for all $\R_+$ and $\R_-$ respectively.  Furthermore, the tangent spaces of these manifolds along $u_2^*$ are given by the three-dimensional spaces $E^{\rcu,-}_2(\xi_2):= E^{\ru,-}_2(\xi_2)\oplus E^{c,-}_2(\xi_2)$ and $E^{\rcs,+}_2(\xi_2):= E^{\rs,+}_2(\xi_2)\oplus E^{c,+}_2(\xi_2)$. Restricting to a three-dimensional transverse section $\tilde \Sigma_2 = \{\mu_2  = 0\}$, we wish to construct a 1-D family of intersections in $\tilde \Sigma_2$ by writing the invariant manifolds locally as graphs over the relevant tangent bundles and constructing a set of matching equations.

In more detail, the transversality given in Proposition \ref{p:transv} gives a coordinate basis of $\tilde \Sigma_2$ as $\tilde \Sigma_2 = E^{\rs,+}_2(0) \oplus E^{\ru,-}_2(0) \oplus \mathrm{span}\{ e_4\},$  where $e_4$ points in one of the center directions, while the other center direction points along the flow, transverse to $\tilde \Sigma_2.$  We let $(w_s,w_u,w_4)$ denote the corresponding coordinates and also note that as these coordinates arise from the nonlinear variation equation, we have that $u_2^*(0)\cap \tilde \Sigma$ corresponds to $(w_s,w_u,w_4) = 0.$ In these coordinates, we can write the invariant manifolds as graphs
\begin{align}
W^{\rcu}_1(p^+)\cap\tilde\Sigma_2 &= \{(h_-(w_u,w_4),w_u,w_4)\,: |w_u|,|w_4|\leq \delta\},  h_-:\R^2\rightarrow \R,\\
W^{\rcs}_3(p^0)\cap\tilde\Sigma_2 &= \{(w_s,h_+(w_s,w_4),w_4)\,: |w_s|,|w_4|\leq \delta\},  h_+:\R^2\rightarrow \R
\end{align}
for some  $0<\delta\ll1$, for smooth functions $h_\pm$ with tangency conditions $h_-(0,0) = D_{w_u,w_4} h_-(0,0) =0$,  and $h_+(0,0) = D_{w_s,w_4} h_+(0,0) =0$.  Intersections of the two invariant manifolds can then be obtained via the following matching equations
\begin{align}
h_-(w_u,w_4) &= w_s,\\
w_u &= h_+(w_s,w_4).
\end{align}
Note we have equated the $w_4$ component of each graph description. Rearranging these equations, intersections are then given as zeros of the following set of equations $\mc{H}(w_s,w_u;w_4) := (w_s - h_-(w_u,w_4) , w_u - h_+(w_s,w_4) )^T.$ The properties of the graphs then imply
$$
\mc{H}(0,0;0) =(0, 0),\qquad D_{w_s,w_u} \mc{H}(0,0;0) = I_2,
$$
so that, by the Implicit Function theorem, one can solve for $(w_s,w_u)$ as a function of $w_4$ near $(0,0,0)$, giving a one-parameter family of solutions parametrized by the $w_4$ variable, that is $r_2$, which corresponds to $\bar\epsilon.$ 
\end{proof}

Having constructed the heteroclinic between equilibria on the singular sphere, we now use inclination lemmas to also conclude an intersection between the desired invariant manifolds $\mc{M}^{\rcu,+}$, $\mc{M}^{\rcs,0}$.  We state the argument in detail for the dynamics near $K_1$, and outline the argument for $K_3$ as it follows in a similar manner.

\section{Inclination properties and completion of the proof of Theorem \ref{t:2}}\label{s:incl}
\subsection{Inclination properties in chart $K_1$}
\paragraph{Straightening the foliations}
We wish to track how $\mc{M}^{\rcu,+}$ passes through a neighborhood of the equilibrium $p^+$. We note that by the properties of the linearization about $S^+_0$, the manifolds $\mc{M}^{\rcu,+}$ and $W^{\rcu}_1(p^+)$ are both tangent to the collection of center-unstable eigenspaces of $S^+_0$. While they may not coincide due to the non-uniqueness of center manifolds, we find that they leave a neighborhood of $p^+$ exponentially close to each other. 

As the vector-field in $K_1$ coordinates is $C^s$ smooth for all $s\in\N$, the local center-stable and center-unstable manifolds possess the same regularity properties. Hence, classic results \cite{deng1990homoclinic} give that there exists a $C^{s-2}$ change of coordinates to $(w_s,w_u;w_{c,1},w_{c,2})^T$, with $0\in\R^4$ corresponding to $p^+$, which flattens the center manifold of $p^+$ along with its strong-stable and unstable foliations. For simplicity, we let $w_c = (w_{c,1},w_{c,2})^T$. In such coordinates, the system takes the form
\begin{align}
w_s' = \lambda_s w_s + g_s(w_s,w_u;w_c)w_s,\\
w_u' = \lambda_u w_u + g_u(w_s,w_u;w_c)w_u,\\
w_c' = h_c(w_c)+g_c(w_s,w_u;w_c),
\end{align}
where $\lambda_{u/s} = \pm \sqrt{2}$, $h_c:\R^2\rightarrow\R^2$ gives the vector-field on the 2-D center manifold $\mathcal{M}^{\mathrm{c}}$, and the nonlinearities satisfy
\begin{align}
Dg_s(0,0;0) = 0 = Dg_u(0,0;0),\quad g_c(0,w_u;w_c) = g_c(w_s,0;w_c) = 0.
\end{align}
We remark that the coordinates $w_j$ used here  are different than those used in the proof of Proposition \ref{p:cscu}.
Hence the center stable and unstable manifolds are given as the invariant foliations of straight fibers
$$
W^{\rcu}_1(p^+) = \bigcup_{|w_{c,0}|\leq \delta}\{w_c = w_{c,0},\, w_s = 0,\, |w_u|\leq \delta \},\qquad
W^{\rcs}_1(p^+) = \bigcup_{|w_{c,0}|\leq \delta}\{w_c = w_{c,0},\, w_u = 0,\, |w_s|\leq \delta  \}.
$$ 

We then define in and out sections, transverse to the flow of the system, which track how $\mc{M}^{\rcu,+}$ enters and leaves a neighborhood of $p^+$ locally near the sphere. We set,  
\begin{align}
\Sigma_1^\mathrm{in} &= \{(w_s,w_u,w_{c,1},\rho)\,\,:\, |w_s|\leq \alpha, |w_u|\leq \beta, 0\leq w_{c,1}\leq \delta \},\notag\\
\Sigma_1^\mathrm{out} &=\{(w_s,w_u,\Delta,w_{c,2})\,\,:\, |w_s|\leq \tilde\alpha, |w_u|\leq \tilde\beta, 0\leq w_{c,2}\leq \tilde \rho \},\quad\notag
\end{align} for some small positive constants $\alpha,\beta,\delta,\rho, \Delta, \tilde\alpha,\tilde\beta, \tilde \rho.$ 

\paragraph{Dynamics on the center-manifold}
We find that the vector-field $h_c$ is unchanged in these straightened coordinates and the dynamics are governed by
\begin{align}
w_{c,1}' &= \frac{3w_{c,1}^2}{2} (1 - w_{c,2}^4),\\
w_{c,2}' &= -\frac{w_{c,1}w_{c,2}}{2}(1-w_{c,2}^4).
\end{align}
Using a change of coordinates $' = \dot \,\,\,(1-r_1^4)^{-1}$, which preserves the direction of the flow for small enough
values of $r_1$, one can obtain the partially decoupled system
\begin{align}
\dot{w}_{c,1} &= \frac{3w_{c,1}^2}{2} ,\label{e:ctr1a}\\
\dot{w}_{c,2} &= -\frac{w_{c,1}w_{c,2}}{2}\label{e:ctr1b},
\end{align}
where $\cdot$ denotes differentiation with respect to the new variable $\tilde \xi_2$.
This system can be explicitly solved to find that the solution with initial data $w_c(0) = (\epsilon_0,\rho),\,\, 0<\epsilon_0<\Delta$ lying in the section $\Sigma_1^\mathrm{in}$ has the form
$$
(w_{c,1},w_{c,2})(\tilde\xi_2) = \left(\frac{1}{\epsilon_0^{-1} - \frac{3}{2} \tilde\xi_2}\,,\, \rho (1 - \frac{3}{2}\epsilon_0\tilde \xi_2)^{1/3}\right),
$$
and thus intersects the out section $\Sigma_{1}^\mathrm{out}$ where $w_{c,1}(\tilde \xi_{2,out}) = \Delta$ at the time  
$
\tilde \xi_{2,out} = \frac{2}{3}(\epsilon_0^{-1}-\Delta^{-1} ).
$
We also note that the corresponding $w_{c,2}$-component of the solution satisfies
\begin{align}
w_{c,2}(\tilde\xi_{2,out}) = \rho \epsilon_0^{1/3}\Delta^{-1/3}.
\end{align}
Changing coordinates back to $\xi_2$-time, we obtain the transition time as 
$$
\xi_{2,out} = \frac{2}{3}(\epsilon_0^{-1} - \Delta^{-1}) \left(1-\mc{O}(\rho) \right).
$$

Furthermore, we find that $w_{c,1}$ blows up in finite time at $\xi_2 =2/(3\epsilon_0)$ while all initial conditions with $w_{c,1}>0$ satisfy $\lim_{\xi_2\rightarrow 2/(3\epsilon_0)} w_{c,2}(\xi_2) = 0$. Thus, we can define a transition map $\Pi_1:\Sigma_1^{in}\rightarrow \Sigma_1^{out}$ for all points with $w_{c,1}\neq0$, for constants $\alpha, \beta, \rho$ chosen suitably. (In particular, we require $w_{c,2}(\xi_{2,out})\approx \rho (\epsilon_0\Delta^{-1})^{1/3}< \tilde \rho$, $\epsilon_0<\Delta$, and $\beta$ sufficiently small so that $|w_u(\xi_{2,out})|\leq \tilde \beta$.) 

Next we wish to determine how $\mc{M}^{\rcu,+}$ intersects $\Sigma_1^{out}$. Since,  $\mc{M}^{\rcu,+}$ is tangent to $\{w_s = 0\}$ along $S_0^+$, it can be written as a graph over the center-unstable space. In particular, the intersection with the in-section is given as
\begin{align}
\Sigma_1^{in}\cap \mc{M}^{\rcu,+} = \{(h_{\rcu}^{in}(w_u,w_{c,1},\rho),w_u,w_{c,1},\rho) \,:\, |w_u|\leq \beta, 0\leq w_{c,1}\leq \delta \},\label{e:hcuin}
\end{align}
for a $C^r$ smooth function with $h_{\rcu}^{in}(0,0,\rho) = 0, \, \partial_{w_u} h_{\rcu}^{in}(0,0,\rho) = \partial_{w_{c,1}} h_{\rcu}^{in}(0,0,\rho) = 0.$. The Sil'nikov coordinates then allow one to readily track such initial conditions forward to $\Sigma_1^{out}$ using the straight foliation of the center manifold and an inclination result. In particular we find that $\Pi_1$ maps $\Sigma_1^{in}\cap\mc{M}^{\rcu,+}$ onto a set which is exponentially close to $W^{\rcu}_1(p^+) = \{w_s = 0\}.$

\begin{Proposition}\label{p:pi1}
For $0<\delta<\Delta$ and $\Delta,\tilde\beta,\tilde \rho>0$ sufficiently small, there exists a $C>0$, such that the image of $\Sigma_1^{in}\cap\mc{M}^{\rcu,+}$ under the transition map $\Pi_1$ in $\Sigma_{1}^{out}$ can be written as a graph 
$$
\Pi_1(\Sigma_1^{in}\cap \mc{M}^{\rcu,+}) = \{ (w_{s}^{out}, w_{u}^{out}, \Delta, w_{c,2}^{out})\,:\, w_{s}^{out} = h_\rcu^{out}(w_{u}^{out}, w_{c,2}^{out}),\,\, 0<w_{c,2}^{out}<\tilde \rho,
\,\, |w_u^{out}|<\tilde\beta  \}
$$ with $h_\rcu^{out}:\R^2\rightarrow \R$ $C^r$-smooth, satisfying
\begin{equation}
|h_\rcu(w_{u}^{out}, w_{c,2}^{out})|\leq C \re^{\frac{2\lambda_s}{3\Delta}\left((\rho/w_{c,2}^{out})^3 - 1 \right)}, \quad 0<w_{c,2}^{out}<\rho
\end{equation}
uniformly for $|w_u^{out}|<\tilde\beta$.
\end{Proposition}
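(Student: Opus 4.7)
In the straightened coordinates, the $w_s$-equation
$w_s' = (\lambda_s + g_s(w_s,w_u;w_c))\,w_s$ is linear in $w_s$ with coefficient close to $\lambda_s = -\sqrt{2}<0$, since $Dg_s(0)=0$ and we remain in a small neighborhood of $p^+$. The center dynamics \eqref{e:ctr1a}--\eqref{e:ctr1b} decouple and are explicitly integrable, so the entire transition map reduces to a quadrature plus a scalar exponential estimate.

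The first step is to use the explicit center-flow solution already computed in the excerpt to identify the transit time from $\Sigma_1^{in}$ to $\Sigma_1^{out}$. Writing $w_c(0)=(\epsilon_0,\rho)$ and inverting $w_{c,2}^{out} = \rho (\epsilon_0/\Delta)^{1/3}$ gives $\epsilon_0 = \Delta (w_{c,2}^{out}/\rho)^3$, and hence
\begin{equation*}
\tilde\xi_{2,\mathrm{out}} \;=\; \frac{2}{3}\bigl(\epsilon_0^{-1} - \Delta^{-1}\bigr) \;=\; \frac{2}{3\Delta}\Bigl((\rho/w_{c,2}^{out})^3 - 1\Bigr).
\end{equation*}
Reparametrizing back to $\xi_2$-time only contributes a factor $1+\mathcal{O}(\rho)$, which is harmless in the final exponent.

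The second step is to integrate the $w_s$-equation along this trajectory via variation of constants:
\begin{equation*}
w_s(\xi_2) \;=\; w_s(0)\,\exp\!\Bigl(\textstyle\int_0^{\xi_2}\bigl(\lambda_s + g_s(w_s(\sigma),w_u(\sigma);w_c(\sigma))\bigr)\,d\sigma\Bigr).
\end{equation*}
Choosing $\alpha,\beta,\delta,\rho$ small, the tangency condition $Dg_s(0)=0$ forces $|g_s|$ to be uniformly small on the entire trajectory, so the effective exponential rate differs from $\lambda_s$ by an arbitrarily small $\eta>0$; we select box parameters so that $\lambda_s+\eta \le \tfrac{2}{3}\lambda_s$. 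Combined with the transit time above and $|w_s(0)|\le \alpha$, this yields the stated bound. An analogous argument shows that $|w_u(\tilde\xi_{2,\mathrm{out}})|$ stays below $\tilde\beta$, justifying that $\Pi_1$ indeed lands in $\Sigma_1^{out}$.

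The third step is to express the image as a graph. Parametrize $\Sigma_1^{in}\cap \mc{M}^{\rcu,+}$ by $(w_u^{in},w_{c,1}^{in})$ through \eqref{e:hcuin} and compose with the flow; by the monotonicity of the explicit center flow and the backward contraction of $w_u$, the projection onto $(w_u^{out},w_{c,2}^{out})$ is a diffeomorphism onto its image (for $0<\delta<\Delta$ with $\Delta,\tilde\beta,\tilde\rho$ small). Inverting yields $h^{out}_\rcu$ with the $C^r$ regularity inherited from the flow, and the bound derived in the second step becomes the pointwise estimate on $h^{out}_\rcu$ stated in the proposition.

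The principal obstacle is that the transit time $\tilde\xi_{2,\mathrm{out}}$ diverges as $w_{c,2}^{out}\to 0^+$, so the perturbation $g_s$ must be controlled \emph{uniformly} along trajectories of arbitrarily long duration. This is what forces the smallness conditions on $\Delta,\tilde\beta,\tilde\rho$: they ensure that the entire trajectory stays in a region where $|g_s|\le |\lambda_s|/3$, so that the exponential contraction survives the long passage time and produces the doubly-exponential (in $1/w_{c,2}^{out}$) flattening onto the center-unstable manifold $W^{\rcu}_1(p^+) = \{w_s=0\}$.
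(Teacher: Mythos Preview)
Your overall strategy---integrate the $w_s$-equation along the trajectory and use the explicit center dynamics to compute the transit time---is close to the paper's, but there is a genuine gap in your treatment of the unstable direction $w_u$.

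You write ``An analogous argument shows that $|w_u(\tilde\xi_{2,\mathrm{out}})|$ stays below $\tilde\beta$.'' This is false: $\lambda_u=+\sqrt{2}>0$, so $w_u$ \emph{expands} forward, and for generic $|w_u^{in}|\le\beta$ one has $|w_u(\xi_{2,\mathrm{out}})|\sim \beta\,e^{\lambda_u\xi_{2,\mathrm{out}}}$, which diverges as $w_{c,1}^{in}\to 0^+$. Consequently the forward transition map $\Pi_1$ is only defined on an exponentially thin sliver of $\Sigma_1^{in}\cap\mc{M}^{\rcu,+}$ near $w_u^{in}=0$, and your parametrization by $(w_u^{in},w_{c,1}^{in})$ with $|w_u^{in}|\le\beta$ does not yield a well-defined composition. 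A related issue is that the center dynamics do \emph{not} fully decouple off the center manifold: the term $g_c(w_s,w_u;w_c)$ vanishes only when $w_s=0$ or $w_u=0$, so the explicit formula for $(w_{c,1},w_{c,2})$ you use is only the leading order, and you need to control the error uniformly in the (diverging) transit time.

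The paper resolves both issues simultaneously by posing a Sil'nikov boundary value problem: prescribe $w_s(0)=w_s^{in}$, $w_u(\xi_{2,\mathrm{out}})=w_u^{out}$, and $w_c(0)=(w_{c,1}^{in},\rho)$, and invoke Deng's results \cite{deng1990homoclinic} to obtain a unique solution with uniform estimates $|w_s(\xi)|\le Ce^{\lambda_s\xi}$, $|w_u(\xi)|\le Ce^{\lambda_u(\xi-\xi_{2,\mathrm{out}})}$, and $|w_c(\xi)-w_c^0(\xi)|\le Ce^{\lambda_s\xi+\lambda_u(\xi-\xi_{2,\mathrm{out}})}$. Fixing $w_u$ at the exit (rather than the entrance) is precisely what makes the unstable direction harmless and what justifies treating the center dynamics as a perturbation of the explicit flow. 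Your variation-of-constants step for $w_s$ is then correct, but it must sit inside this BVP framework rather than a forward initial-value integration.
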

\begin{proof}
We use a Sil'nikov boundary value formulation to write the $w_s$-coordinate of $\Pi_1(\Sigma_1^{in}\cap \mc{M}^{\rcu,+})$ as a graph over the $w_{u}^{out}$ and  $w_{c,2}^{out}$ coordinates. In other words, we can write solutions with initial data in $\Sigma_1^{in}\cap \mc{M}^{\rcu,+}$ solely in terms of the $\Sigma_1^{out}$ data. 

Using the straightened foliations of the strong stable and unstable dynamics, the results of \cite{deng1990homoclinic} imply there exists a unique solution $(w_s,w_u,w_{c,1},w_{c,2})(\xi;w_s^{in},w_u^{out},w_{c,1}^{in},\rho )$ of the Sil'nikov boundary value problem with boundary data $w_c(0) = (w_{c,1}^{in},\rho), w_s(0) = w_s^{in},\, w_u(\xi_{2,out}) = w_u^{out}$ for $|w_{c,1}^{in}|\leq \delta,\,\, |w_s^{in}|\leq \alpha$, and $|w^{out}_u|\leq \tilde \beta$.
Lemma 3.1 of \cite{deng1990homoclinic} also gives that there exists exponential expansions of the solution components. In more detail, if $w_c^0(\xi)$ denotes the solution on the center manifold with initial condition $w_c^0(0) = w_c(0) = (w_{c,1}^{in},\rho)$, then we have
\begin{align}
w_c(\xi) = w_c^0(\xi) + R(\xi,\xi_{2,out},w_s^{in}, w_u^{out}, w_{c,1}^{in},\rho)
\end{align}
for some $\R^2$ valued $C^{r-2}$-function with $R(0,\xi_{2,out},w_s^{in}, w_u^{out}, w_{c,1}^{in},\rho) = 0$. This perturbation, as well as the hyperbolic parts of the solution satisfy the following estimates for some $C>0$ independent of $\xi_{2,out}$ and the boundary data,
\begin{align}
|w_s(\xi)|&\leq C \re^{\lambda_s\xi},\\
|w_u(\xi)|&\leq C \re^{\lambda_u(\xi - \xi_{2,out})}\\
|R(\xi)|&\leq C \re^{\lambda_s \xi + \lambda_u(\xi - \xi_{2,out})}.
\end{align}

With these general estimates, for each pair $(w_{u}^{out},w_{c,1}^{in})$, we evaluate the $w_u$-component of the Sil'nikov solution at $\xi = 0$ and set $w_s^{in} = h_\rcu^{in}(w_u(0) ;w_{c,1}^{in},\rho)$ where $h_\rcu^{in}$ is the graph for the center-unstable manifold defined in \eqref{e:hcuin} above. Furthermore, we also use 
$$
w_{c,2}^{out} = w_{c,2}(\xi_{2,out}) = \rho (w_{c,1}^{in}/\Delta)^{1/3}\cdot(1+\mc{O}(\rho))
$$
 given above to  write $w_{c,1}^{in}$ in terms of $w_{c,2}^{out}$, obtaining $w_{c,1}^{in} = \Delta (w_{c,2}^{out}/\rho)^3\cdot(1+\mc{O}(\rho))$.  The graph $h_\rcu^{out}$ is then given as the function 
$$
h_\rcu^{out}(w_u^{out},w_{c,2}^{out}):= w_s(\xi_{2,out};w_s^{in}, w_u^{out}, w_{c,1}^{in},\rho),
$$
with the aforementioned substitutions for $w_s^{in}$ and $ w_{c,1}^{in}$. The estimates on $h_\rcu^{out}$ then follow from using the substitutions and the exponential estimate on $w_s(\xi)$ above as well as the expansion,
$$
\xi_{2,out} = \frac{2}{3}((w_{c,1}^{in})^{-1} - \Delta^{-1})(1+\mc{O}(\rho)) = \frac{2}{3\Delta}\left((\rho/w_{c,2})^3 - 1 \right)(1+\mc{O}(\rho)).
$$
\end{proof}



\subsection{Inclination properties on chart $K_3$}
One can also show that $ W^{\rcs}_3(p^0)$ is exponentially close to $\mc{M}^{\rcs,0}$ in a neighborhood of $p_0$ in the $K_3$ chart. The result follows in the same way as done in $K_1$ but one reverses time, flowing backwards from the ``out" chart to the ``in" chart. To this end one can once again change to coordinates $(w_s,w_u,w_{c,1},w_{c,2})$ which straighten the strong fibers so that $W^{\rcs}_3(p^0)$ is locally given by $\{w_u = 0\}$. Roughly $w_{c,1}$ corresponds to $\epsilon_3$ and $w_{c,2}$ to $r_3$. We recall that the linearization at $p^0$ has hyperbolic eigenvalues $\tilde\lambda_u = 1,\tilde\lambda_s =-1$. One then defines in and out sections
\begin{align}
\Sigma_{3}^{in} &= \{ (w_s,w_u,\Delta,w_{c,2} )\,:\, |w_s|\leq\tl\alpha, |w_u|\leq \tl\beta,  0\leq w_{c,2}<  \rho\},\notag\\
\Sigma_{3}^{out} &= \{ (w_s,w_u,w_{c,1},\rho)\,:\, |w_s|\leq\alpha, |w_u|\leq \beta,  0\leq w_{c,2} < \delta\},\notag
\end{align}
along with a transition map $\Pi_3:\Sigma_3^{out}\rightarrow \Sigma_3^{in}$ formed by the time-reversed flow.
Using the center manifold dynamics given by
\begin{align}
w_{c,1}' &= -\frac{3w_{c,1}^2}{2} (1 - w_{c,2}^4),\\
w_{c,2}' &= \frac{w_{c,1}w_{c,2}}{2}(1-w_{c,2}^4),
\end{align}
and a Sil'nikov boundary value problem, we then have the following inclination result.
\begin{Proposition}\label{p:k3inc}
For $0<\delta<\Delta$ and $\Delta,\tilde\beta, \rho>0$ sufficiently small, there exists a $C>0$, such that the intersection of the image of $\Sigma_3^{out}\cap\mc{M}^{\rcs,0}$ under the transition map $\Pi_3$ in $\Sigma_{3}^{in}$ can be written as a graph 
$$
\Pi_3(\Sigma_3^{out}\cap\mc{M}^{\rcs,0}) = \{ (w_s^{in},w_u^{in},\Delta, w_{c,2}^{in}\,:\, 
w_u^{in} = h_{\rcs}^{in}(w_s^{in},w_{c,2}^{in}),\,\,0<w_{c,2}^{in}< \rho,
\,\, |w_s^{in}|<\tilde\alpha  \},
$$
with $h_{\rcs}^{in}:\R^2\rightarrow \R$ a $C^r$-smooth function satisfying
\begin{equation}
|h_\rcs(w_{s}^{in}, w_{c,2}^{in})|\leq C \re^{\frac{2\tilde\lambda_s}{3\Delta}\left((\rho/w_{c,2}^{in})^3 - 1 \right)}, \quad 0<w_{c,2}^{in}<\rho,
\end{equation}
uniformly for $|w_s^{in}|<\tilde\beta$.
\end{Proposition}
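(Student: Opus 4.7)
The plan is to mirror the argument of Proposition \ref{p:pi1}, with the stable and unstable directions and the in/out sections interchanged to reflect the time-reversed flow in $K_3$. First, I would use the $C^s$-regularity of the $K_3$ vector field \eqref{e:k3a}--\eqref{e:k3d} together with the smoothness results of \cite{deng1990homoclinic} to introduce $C^{r-2}$ coordinates $(w_s, w_u, w_{c,1}, w_{c,2})$ in a neighborhood of the equilibrium curve on $S_0^0$ that simultaneously flatten the local two-dimensional center manifold to $\{w_s = w_u = 0\}$ and the attached strong stable/unstable foliations. In these Sil'nikov-normalized coordinates the hyperbolic rates are $\tilde\lambda_u = 1$ and $\tilde\lambda_s = -1$, the local center-stable manifold of $p^0$ is exactly $\{w_u = 0\}$, and the center-manifold dynamics take precisely the form displayed in the statement of the proposition --- identical in structure to \eqref{e:ctr1a}--\eqref{e:ctr1b} but with the opposite signs, encoding the time reversal.

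Second, I would integrate the center system explicitly, as in the $K_1$ case. After the usual rescaling absorbing the factor $(1-w_{c,2}^4)$, the conservation law $w_{c,1} w_{c,2}^3 = \mathrm{const}$ (still encoding the original parameter $\epsilon$) combined with the relation $w_{c,2}^{out}=\rho$ gives that the forward-time trajectory starting at $(w_{c,1}, w_{c,2})(0) = (\Delta, w_{c,2}^{in}) \in \Sigma_3^{in}$ reaches $\Sigma_3^{out}$ at time
\[
\xi_{2,\mathrm{out}} = \frac{2}{3\Delta}\bigl((\rho/w_{c,2}^{in})^3 - 1\bigr)(1 + \mc{O}(\rho)),
\]
with corresponding $w_{c,1}^{out} = \Delta(w_{c,2}^{in}/\rho)^3(1 + \mc{O}(\rho))$. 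The time-reversed transition map $\Pi_3$ is the inverse of this forward transition.

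Third, using the tangency of $\mc{M}^{\rcs,0}$ to $\{w_u = 0\}$ along $S_0^0$, I would describe $\Sigma_3^{out} \cap \mc{M}^{\rcs,0}$ as a graph $w_u^{out} = h_\rcs^{out}(w_s^{out}, w_{c,1}^{out})$ with $h_\rcs^{out}$ and its differential vanishing at the origin, and then pose the Sil'nikov boundary value problem on $[0, \xi_{2,\mathrm{out}}]$ with center data $w_c(0) = (\Delta, w_{c,2}^{in})$, stable data $w_s(0) = w_s^{in}$, and the terminal condition $w_u(\xi_{2,\mathrm{out}}) = h_\rcs^{out}(w_s(\xi_{2,\mathrm{out}}), w_{c,1}^{out})$ enforcing that the endpoint lies on $\mc{M}^{\rcs,0}$. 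The Deng existence/uniqueness result produces a unique $C^{r-2}$ solution with exponential estimates $|w_s(\xi)| \leq C e^{\tilde\lambda_s \xi}$ and $|w_u(\xi)| \leq C e^{\tilde\lambda_u(\xi - \xi_{2,\mathrm{out}})}$ uniform in the boundary data and in $\xi_{2,\mathrm{out}}$. Defining $h_\rcs^{in}(w_s^{in}, w_{c,2}^{in}) := w_u(0)$, substituting the expression above for $w_{c,1}^{out}$, and using $\tilde\lambda_u = -\tilde\lambda_s$ to rewrite $e^{-\tilde\lambda_u \xi_{2,\mathrm{out}}} = e^{\tilde\lambda_s \xi_{2,\mathrm{out}}}$ then yields the claimed bound.

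I expect the main obstacle, such as it is, to be purely bookkeeping: keeping the direction of time consistent under the inversion $\Pi_3$ and converting the $w_{c,1}^{out}$-dependence of $h_\rcs^{out}$ into a $w_{c,2}^{in}$-dependence via the conservation law. Both are direct substitutions and require no new analytic content beyond that already developed in Proposition \ref{p:pi1}.
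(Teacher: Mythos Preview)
Your proposal is correct and follows exactly the approach the paper intends: the paper does not give a separate proof of Proposition~\ref{p:k3inc} but simply states that ``the result follows in the same way as done in $K_1$ but one reverses time, flowing backwards from the `out' chart to the `in' chart,'' and your outline carries out precisely this dualization of the Proposition~\ref{p:pi1} argument. The bookkeeping you flag---the sign reversal in the center dynamics, the conservation-law substitution $w_{c,1}^{out}\leftrightarrow w_{c,2}^{in}$, and the use of $\tilde\lambda_u=-\tilde\lambda_s$ to convert the Deng estimate on $w_u$ into the stated bound---is exactly what is required and involves no new analytic content.
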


%

\subsection{Tracking across the re-scaling chart and completion of proof of Theorem \ref{t:2}}

To complete the proof, one translates the intersections $\Sigma_1^{out}\cap \mc{M}^{\rcu,+}$ and $\Sigma_3^{in}\cap \mc{M}^{\rcs,0}$ into the chart $K_2$ using $\kappa_{12}$ and $\kappa_{23}^{-1}$ respectively, and then flows them forward and backward respectively to construct an intersection in the section $\tilde \Sigma_2 = \{\mu_2 = 0\}$. In short, the existence of a 2-D intersection follows from the exponential closeness of $\Sigma_1^{out}\cap \mc{M}^{\rcu,+}$ to $\Sigma_1^{out}\cap W_1^\rcu(p^+)$ and  $\Sigma_3^{in}\cap \mc{M}^{\rcs,0}$ to  $\Sigma_3^{in}\cap W_3^\rcs(p^0)$, and the inclination properties about the transverse heteroclinic $u_2^*\in  W_1^\rcu(p^+)\cap  W_3^\rcs(p^0) $ in $K_2$.

For $(u_1,v_1,\epsilon_1,r_1)\in\Sigma_1^{out}$, recall we have $\epsilon_1 = \Delta$ and thus 
$$
(u_2,v_2,\mu_2,r_2) = \kappa_{12}(u_1,v_1,\Delta,r_1) = (\Delta^{-1/3} u_1,\Delta^{-2/3} v_1, \Delta^{-2/3},\Delta^{1/3} r_1).
$$
We then define the entry section in $K_2$ as $\Sigma_2^{in} = \kappa_{12}\Sigma_1^{out} =  \{\mu_2 = \Delta^{-2/3}\}$ and set $\xi_{2,in} = -\Delta_1^{-2/3}$ so that $u_2^*(\xi_{2,in})\in \Sigma_2^{in}$.   For $(u_3,v_3,\epsilon_3,r_1)\in \Sigma_3^{in}$ we similarly have 
$$
(u_2,v_2,\mu_2,r_2) = \kappa_{23}^{-1}(u_3,v_3, \Delta, r_3) = ( \Delta^{-1/3} u_3, \Delta^{-2/3} v_3, -\Delta^{-2/3}, \Delta^{1/3}r_3),
$$
and thus define $\Sigma_2^{out} = \kappa_{23}^{-1}\Sigma_3^{in} = \{\mu_2 = -\Delta^{-2/3}\}$ and $\xi_{2,out} = \Delta_1^{-2/3}$ so that $u_2^*(\xi_{2,out})\in \Sigma_2^{out}$.  

Next, using the structure of $\kappa_{12}$ and the result of Proposition \ref{p:pi1}, we have that $\mc{M}_{in}:=\kappa_{12}(\Sigma_1^{out}\cap \mc{M}^{\rcu,+})$ is $\mc{O}(\Delta^{-2/3} \re^{-C/\Delta})$ away from $\kappa_{12}(\Sigma_1^{out}\cap W_1^{\rcu}(p^+)$ for some constant $C>0$ for $\Delta$ sufficiently small and $r_1<\rho/2$. Thus, we observe that $\mc{M}_{in}$ is a 2-D manifold in $\Sigma_2^{in}$ which intersects the linear stable bundle $E^{\rs,-}(\xi_{2,in})$ transversely. 
Letting $\Phi_{\xi_2}$ denote the flow of \eqref{e:k2a} - \eqref{e:k2d} in $K_2$, the hyperbolic inclination properties about $u_2^*$ then imply that $\Phi_{\xi_2} (\mc{M}_{in})$ exponentially converges onto $W_1^\rcu(p^+)$ as $\xi_2$ increases and can be written as a graph over the center-unstable bundle $E^{\rcu,-}_2(\xi_2)$. Using the monotonicity properties of the $\mu_2$ flow for $0\leq r_1 \ll1$, the transition map $\Pi_{2,in}:\Sigma_{2}^{in}\rightarrow \tilde\Sigma_2$ defined by the flow $\Phi_{\xi_2}$ is well-defined, with time of flight $\xi_2 = -\Delta^{-2/3} + \mc{O}(r_2)$.  Therefore we conclude that $\Pi_{2,in}\mc{M}_{in}$ can be written as a graph over $E^{\rcu,-}(0)$ and is exponentially close to $\tilde \Sigma_2\cap W_1^{\rcu}(p^+)$ in a neighborhood of $u_2^*(0).$

In a similar manner, Proposition \ref{p:k3inc}, gives that $\mc{M}_{out}:= \kappa_{23}^{-1}( \Sigma_3^{in}\cap\mc{M}^{\rcs,0})$ is $\mc{O}(\Delta^{-2/3} \re^{-C/\Delta})$ away from $\kappa_{23}^{-1}(\Sigma_3^{in}\cap W_3^\rcs(p_0))$ for $r_3<\rho/2$.  Defining $\Pi_{2,out}:\Sigma_2^{out}\rightarrow \tilde \Sigma_2$ by using the backwards flow $\Phi_{\xi_2},\, \xi_2<0$, the inclination properties about $u_2^*$ imply that $\Pi_{2,out} \mc{M}_{out}$ can be written as a graph over $E^{\rcs,+}(0)$ and is exponentially close to $\tilde \Sigma_2\cap W_3^{\rcs}(p^0)$. Then using the transversality properties of the intersection $W_1^\rcu(p^+)\cap W_3^\rcs(p^0)$ we conclude the existence of the desired intersection, completing the existence result of Theorem \ref{t:2}. 

Estimate \eqref{e:uest} is obtained by putting the above results for charts $K_1$-$K_3$ together and translating back to the original coordinates. Here $\sqrt{2}w_{HM}$ is given by $u_2^*$ in the $K_2$ coordinates. We see that the heteroclinic, formed by $u_2^*$, obtained in the singular limit of the above geometric desingularization analysis is the leading-order approximation of the desired front solution in the region $|\mu|\lesssim \rho \epsilon^{2/3}$, where $\rho>0$ is a small, $\epsilon$ independent constant.  Moreover, given that $\mu\sim -\epsilon \xi$  in a neighborhood of the origin,  the leading order asymptotics hold for $|\xi|\leq \rho \epsilon^{-1/3}.$  Unwinding the scalings from the blow-up coordinates, 
 the desired heteroclinic front solution asymptotically satisfies
\begin{align}
|u^*(\xi) - \epsilon ^{1/3} u_2^*(\epsilon^{1/3}\xi)| &\leq \rho \epsilon^{2/3},  \qquad |\xi|\leq \rho \epsilon^{-1/3}.
\end{align}

\section{Discussion and future directions}\label{s:disc}
To conclude, we discuss several immediate consequences of our results and highlight several avenues for future research. We expect our $c>0$ results and the phenomenological mechanisms studied in this work to govern front dynamics for any scalar reaction-diffusion equation 
\beq\label{e:srd}
u_t = u_{xx} + f(x-ct,u), \quad u(x,t)\in\R,
\eeq 
where $f$ is smooth with slowly varying heterogeneity which moderates the stability of a homogeneous equilibrium state and undergoes a bifurcation to a stable equilibrium state as $\xi$ moves from $+\infty$ to $-\infty$. For example, we expect a result similar to Theorem \ref{t:0} to hold for \eqref{e:srd} for a slowly-varying Fisher-KPP type nonlinearity $f(\xi,u) = \mu(\xi) u - u^2$ with $\mu$ defined as above. Furthermore, we expect the underlying mechanisms studied here to govern the formation of front solutions in slowly-varying super-critical pattern-forming equations, such as the real and complex Ginzburg-Landau equations, the Swift-Hohenberg equation, and many relevant reaction-diffusion equations.

This work can be viewed as a new contribution to the nascent body of research studying dynamic bifurcation in spatially extended systems. It points to a new set of problems which are of interest both for applications and for mathematics. It also provides a novel application-motivated example of how techniques from geometric singular perturbation theory can be used to uncover and precisely characterize front dynamics in a slowly-varying environment.  From a technical perspective, it also provides a testbed to apply dynamic bifurcation techniques in a higher-order system with multiple additional hyperbolic directions as well as a control parameter (in our case $c$) which governs the specific type of dynamic bifurcation.

\subsection{Stability}\label{ss:stab}

It is not difficult to see that the solutions constructed in Theorem \ref{t:0} and Theorem \ref{t:2} are asymptotically stable, that is, they attract all nearby initial conditions exponentially in the equation \eqref{e:ac}, posed in the co-moving frame  $\zeta = -(x - ct)$,
\begin{equation}
u_t = u_{\zeta\zeta} - cu_\zeta +\mu u - u^3
\end{equation}
%
Given standard results on asymptotic stability in semilinear PDE (see \cite[Ch. 5]{henry1981geometric} or \cite{kapitula2013spectral}), it is sufficient to show that the spectrum of the linearization at such a solution has strictly negative real part. Note that there is no spatial translation eigenvalue due to the heterogeneity. We write therefore $u^*$ for the first component of $\Gamma_\eps$, suppressing the dependence on $\eps$ and $c$, and recall that $u^*_\zeta>0$ and  $\mu_\zeta>0$. We then need to consider the spectrum of the linearization
\begin{equation}\label{e:L0}
 \mathcal{L}_0 u:=u_{\zeta\zeta} - c u_\zeta + (\mu-3(u^*)^2)u,
\end{equation}
considered as a closed and densely defined operator on, say $BC^0(\R)$. This operator is conjugate to a formally self-adjoint operator
\begin{equation}\label{e:Lc}
 \mathcal{L}_c  u:=(e^{-c\zeta/2} \mathcal{L}_0 e^{ c\zeta/2})u=   u_{\zeta\zeta} + (\mu-\frac{c^2}{4} -3(u^*)^2)u,
\end{equation}
Indeed, $ \mathcal{L}_c$ is clearly self-adjoint on $L^2(\R)$. A quick calculation shows that the essential spectra of $\mathcal{L}_0$ and $\mathcal{L}_c$ have strictly negative real part. Moreover, inspecting the decay of eigenfunctions, that is, to solutions of $\mathcal{L}_c u=\lambda u$ with $\Re\lambda\geq 0$, one quickly sees that the point spectra of $\mathcal{L}_0$ and $\mathcal{L}_c$ coincide. Similarly, point and essential spectra do not depend on the choice $BC^0$ versus $L^2$, so that we restrict ourselves to excluding eigenvalues $\lambda\geq 0$ to $\mathcal{L}_c$ in $L^2$.

To exclude such eigenvalues, we proceed as in Proposition \ref{p:transv} above. Assume that there is a maximal eigenvalue $\lambda_0\geq 0$ with eigenfunction $u_0(\zeta)$, which then has a sign and we assume $u_0(\zeta)>0$. Next, recall that $u^*_{\zeta\zeta}-cu^*_\zeta+\mu u^*-(u^*)^3=0$, so that, by differentiating with respect to $\zeta$, we find
\begin{equation}\label{e:der}
  \mathcal{L}_0 u^*_\zeta + \mu_\zeta u^*=0,
\end{equation}
or
\begin{equation}\label{e:der1}
  \mathcal{L}_c \left(e^{-c\zeta/2}u^*_\zeta\right) + \left(e^{-c\zeta/2} \mu_\zeta\right) u^*=0.
\end{equation}
One quickly verifies that $\left(e^{-c\zeta/2}u^*_\zeta\right)$ is exponentially localized, as is  $\left(e^{-c\zeta/2} \mu_\zeta\right) u^*$, and we shall exploit this property by testing the eigenvalue against these functions. We find from $\mathcal{L}_c u_0=\lambda u_0$ after integrating against $e^{-c/2} u^*_\zeta$, that, using first that $u^*_\zeta,u_0>0$, $\lambda_0\geq 0$, self-adjointness of $\mathcal{L}_c$, and \eqref{e:der1},
\begin{align*}
 \la \re^{-c\zeta/2}u^*_\zeta,\mathcal{L}_c u_0\ra_{L^2} &=\lambda_0\la \re^{-c\zeta}u^*_\zeta,u_0\ra_{L^2}\geq 0,\\
\la \mathcal{L}_c(e^{-c\zeta/2}u^*_\zeta), u_0\ra_{L^2}&\geq 0,\\
 \la -\re^{-c\zeta/2}\mu_\zeta u^*, u_0\ra_{L^2}&\geq 0,
\end{align*}
a contradiction to $\mu_\zeta,u^*,u_0>0$.

Non-monotone fronts, discussed next, are likely unstable with increasing Morse index. Using Maslov index arguments, for example, one would seek to establish the additional unstable eigenvalues for each node created in the solution.

\begin{Remark}
For slowly-ramped fronts in systems without a comparison principle and the monotonicity properties exploited above, the techniques of \cite{goh2021spectral} should be of use in locating spectrum and proving stability. In more detail, the front $u^*$ is exponentially close to the trivial state $u = 0$ in the $\mc{O}(\epsilon^{2/3})$-wide interval $\mu\in(\mu_\mathrm{fr},\mu_c)$. Here, the trivial state is absolutely unstable.  Since this region is $\mc{O}(\epsilon^{-1/3})$-wide in the spatial variable $\xi$, one expects all but finitely many of the point spectrum of $\mc{L}$ to lie close to the absolute spectrum of the trivial state. Following the aforementioned work, one would projectivize the eigenvalue problem $\mathcal{L}v = \lambda u$ and track the slow winding of the unstable subspace as $\xi$ passes from $ \xi_\mathrm{fr}$ to $\xi_c$. Since the winding frequency slows to zero as $\xi$ increases and $\mu\rightarrow \mu_c^-$, one does not expect intersections to exist for $\lambda\geq0$, and thus no unstable eigenvalues.
\end{Remark}

\subsection{Fronts with non-monotonic tails}\label{ss:nonmon}
As briefly mentioned in the introduction, our approach for dynamic quenching can readily be extended to prove the existence of fronts with oscillatory tails as well as for fronts with $\lim_{\zeta\rightarrow+\infty} u(\zeta) = -1$. Such fronts arise from the slow attracting manifold $S_\epsilon^a$ on the plane $U_0$, and its corresponding strong unstable foliation, winding all the way around the cylinder before intersecting the stable manifold. After $S_\epsilon^a$ passes around the fold point, the $z$-dynamics in \eqref{e:zut1} cause the manifold to blow-up to negative infinity in finite time. This corresponds to the trajectory moving to another chart of the cylinder. Dynamics on this chart can be coordinatized with a blow-up in the $v$-direction, $w = v/u$, where $w = 0$ roughly corresponds to $z = \infty$.  On the invariant cylinder the dynamics are governed by the equation
$$
w_\zeta = 1 - cw + (\theta+c^2/4) w^2
$$  
and thus consist of constant drift at leading order for $w\sim0$. After tracking the slow manifold through this chart one would then study the dynamics in the $-u$ blow-up with the coordinate $\hat z = -v/u$ and find intersections of the unstable manifold with the stable manifold of the $u = -1$ equilibrium. Further tracking it around the cylinder back to the original chart one could then find another intersection with the original stable manifold.  We once again remark that one could use a polar coordinate blowup of the dynamics near $(u,v) = (0,0)$ without the use of charts.  See Figure \ref{f:cyl} for a schematic depiction. These dynamics are similar those found in the work \cite{carter2018unpeeling} which finds Airy points along the repelling slow manifold of the Fitzhugh-Nagumo system, where the local linear stability type of the point in the fast subsystem changes from being an unstable node to an unstable spiral.
As we expect the fronts with non-trivial winding around the cylinder to be unstable, we do not rigorously pursue their existence here.   

Interestingly, non-monotonicity can also result from a small bias in the cubic, leading to a Painlev\'e II equation with an asymmetric cubic $\eta w + 2 w^3 + k$ for some $k>0$. We expect a variety of interesting applications and more complex results relating to the competition between pulled and pushed fronts, and refer to \cite{troy} for a discussion of applications and analysis of relevant, non-monotone, special solutions in the stationary case $c=0$.  

\begin{figure}[h!]
\centering
\includegraphics[trim = 0.0cm 0.0cm 0.00cm 0.05cm,clip,width=0.6\textwidth]{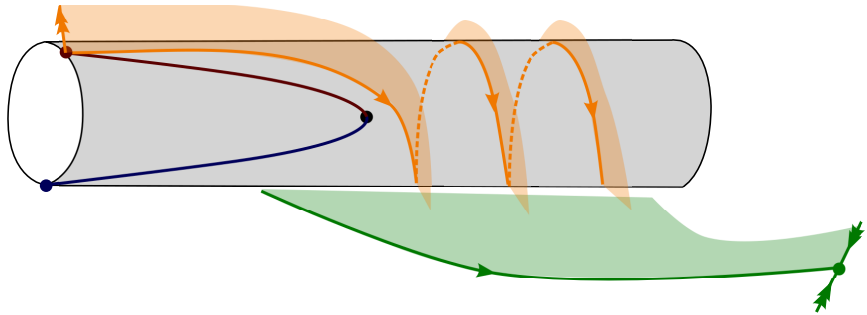}\hspace{-0.2in}
\caption{Schematic depiction of dynamics near the polar coordinate blow-up of the line $(0,0,\mu)$ into the cylinder $\{r = 0\}$ (grey). Colors correspond to objects depicted in previous figures. Winding of the unstable manifold $W^u(0,0,-1)$, which in the blow-up coordinates consists of the attractive slow manifold $S_\epsilon^a$ in the cylinder (orange trajectory) and its strong unstable foliation (orange sheet), allows for additional intersections between the stable manifold $W^s(1,0,1)$ (green). Red and blue curves denote the $\epsilon = 0$ curves of equilibria. Furthermore, this winding allows for connections with the stable manifold $W^s(-1,0,1)$ of the other equilibrium, $u\equiv1$, at $\mu = 1$ (not depicted).    }\label{f:cyl}
\end{figure}

\subsection{Fronts for asymptotically small speeds $0< c\ll1$}\label{e:csm}
We now discuss front solution behavior and asymptotics in the limit where the quenching speed $c$ is asymptotically small. First of all, numerical results in Figure \ref{f:smc} of the difference $\mu_\mathrm{fr} - \mu_c$, show that as $c$ decreases, the $\epsilon$ interval on which the $\epsilon^{2/3}$-delay is valid shrinks. In other words, we observe that as $c$ decreases, the value of $\epsilon_0$ given in Theorem \ref{t:0} goes to zero. Indeed for sufficiently small $c$, the front interface lies ahead of $\mu_c$ so that $\mu_\mathrm{fr} - \mu_c<0$, at least for the numerical range of values $\epsilon$ used in computation. Thus the front tail bleeds into the region where $\mu\leq0$. From a PDE perspective this advance of the front tail could be viewed as being caused by the comparatively large role diffusion plays when the quench is slow moving. Also, we find below that for such small speeds, the front profile resembles the unique connecting solution of Painlev\'e's second equation observed in the $c = 0$ case discussed in Section \ref{s:c0} above.

To understand this behavior one could alternatively seek to understand the limit $c\rightarrow0^+$ for $\epsilon$ fixed small. Such numerics are also depicted in the right plot of Figure \ref{f:smc}. We find, as $c$ decreases the front follows $\sqrt{\mu}$ for a larger range of $\xi$ but decays more slowly as $\xi$ increases past 0.   Furthermore, we can also track the change in front behavior by tracking the value $u(\xi_c)$, where $\xi_c$ is such that $\mu(\xi_c) = \mu_c$. This indicates the size of the front at the leading order take off point. Since there is an additional delay in the front interface for $\epsilon$ sufficiently small, we expect these values to remain exponentially small. In Figure \ref{f:uceps}, we indeed find that the interval of $\epsilon$ values where $u$ is exponentially small decreases as $c$ decreases.  In the limit $c = 0$, there is no such interval and a linear fit of the log-log data here indicates that $u(\xi_c = 0)$ scales like $\epsilon^{1/3}$. 

Further evidence that there is a transition at $c \sim \epsilon^{1/3}$ in
the dynamics of the fronts comes from some preliminary analysis. On the
one hand, for asymptotically small values of $c$ which satisfy $c \ll
\eps^{1/3}$, it turns out that the system is again a perturbation of the
Painlev\'e II equation, as is the case for $c=0$. Indeed, for $c>0$, one
starts with system \eqref{e:uvmea}-\eqref{e:uvmed} and adds the term $-cv$
to the second component. For asymptotically small values
$c=\epsilon^\sigma \tilde{c}$ where $\tilde{c}=\mathcal{O}(1)$ with
respect to $\epsilon$, one may use the same dynamically rescaled
coordinates \eqref{e:barcoor} and the same method of geometric
desingularization as used above. In particular, in the rescaling chart
$K_2$, one finds the same system \eqref{e:k2a}-\eqref{e:k2d}, as in the
analysis of the case $c=0$, but now with the term $-r_2^{3\sigma-1}
\tilde{c} v_2$ included in the second component, \eqref{e:k2b}. This term
is a small perturbation term for $0<r_2\ll 1$ as long as $\sigma > 1/3$.
Hence, for $c \ll \eps^{1/3}$, the structure of the full system is also
that of a small perturbation of the Painlev\'e II equation, as above in the
analysis for $c=0$.

On the other hand, for asymptotically small values of $c$ which satisfy $c
\gg \eps^{1/3}$, preliminary analysis suggests that one can extend the
method of proof of Theorem 1 down to $c \gg \eps^{1/3}$.
For asymptotically small values of $c$, the boundary of $U_0^r$ at $\{ z =
- c/2\}$ gets close to the axis, and with $c \gg \eps^{1/3}$, the method of Sections 2-4 can still be used to show that the
invariant manifolds intersect transversely. Moreover, the terms in the
asymptotic expansion \eqref{e:mufr} stay well ordered for $c \gg
\eps^{1/3}$.

\begin{figure}[h!]
\centering
\includegraphics[trim = 0.0cm 0.0cm 0.0cm 0.0cm,clip,width=0.38\textwidth]{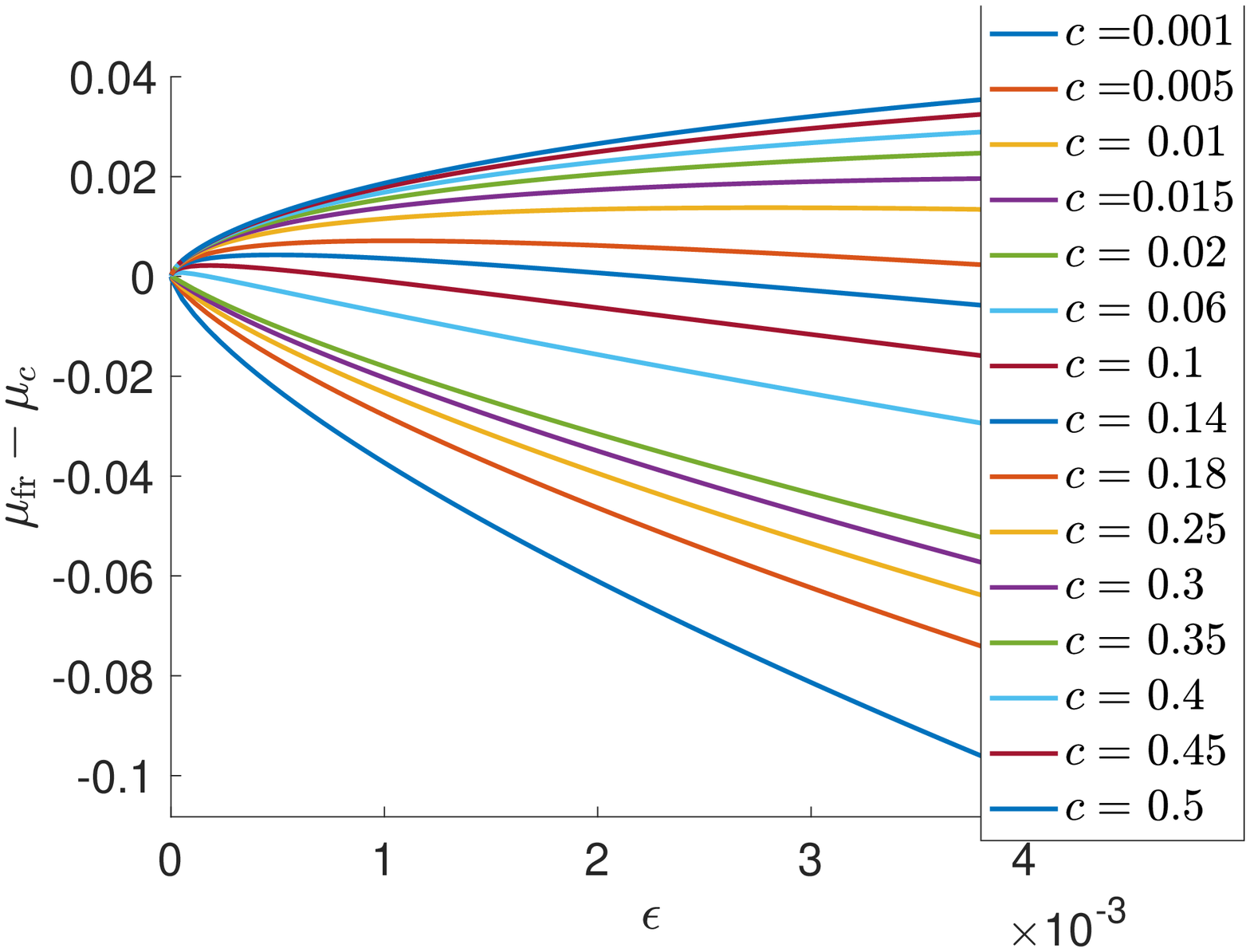}\hspace{-0.2in}\qquad
\includegraphics[trim = 0.0cm 0.0cm 0.05cm 0.05cm,clip,width=0.6\textwidth]{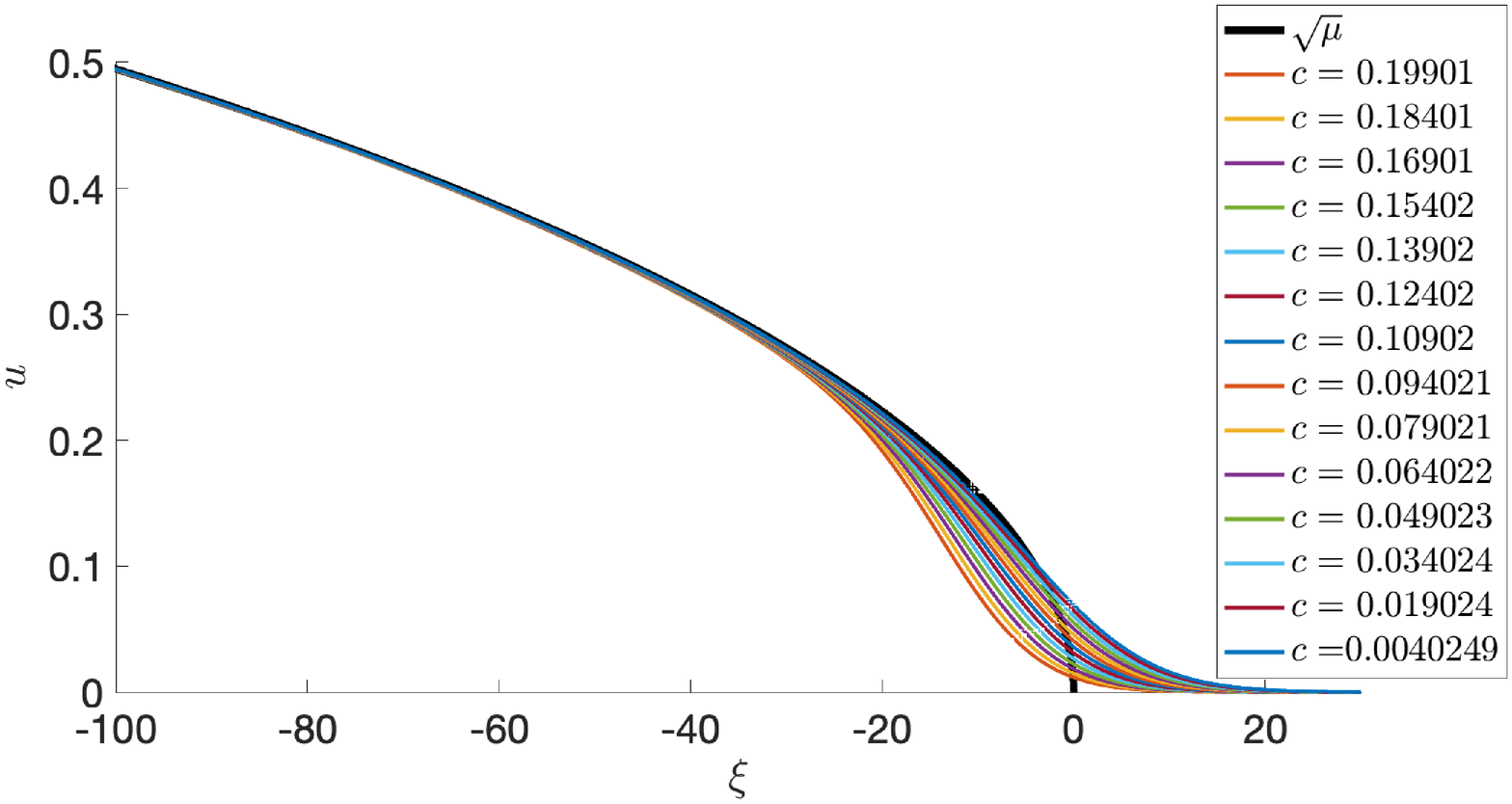}\hspace{-0.2in}
\caption{Left: plots of the numerically measured difference $\mu_\mathrm{fr} - \mu_\mathrm{c}$ against $\epsilon$, for a range of $c$ values (given in legend), curves increase as $c$ increases; Right: Plots of the front profile near $\mu = 0$ for a range of $c$-values (in legend), curves decrease as $c$ increases with $\epsilon =  0.0025$ fixed.    }\label{f:smc}
\end{figure}

\begin{figure}[h!]
\centering
\includegraphics[trim = 0.0cm 0.0cm 0.05cm 0.05cm,clip,width=0.45\textwidth]{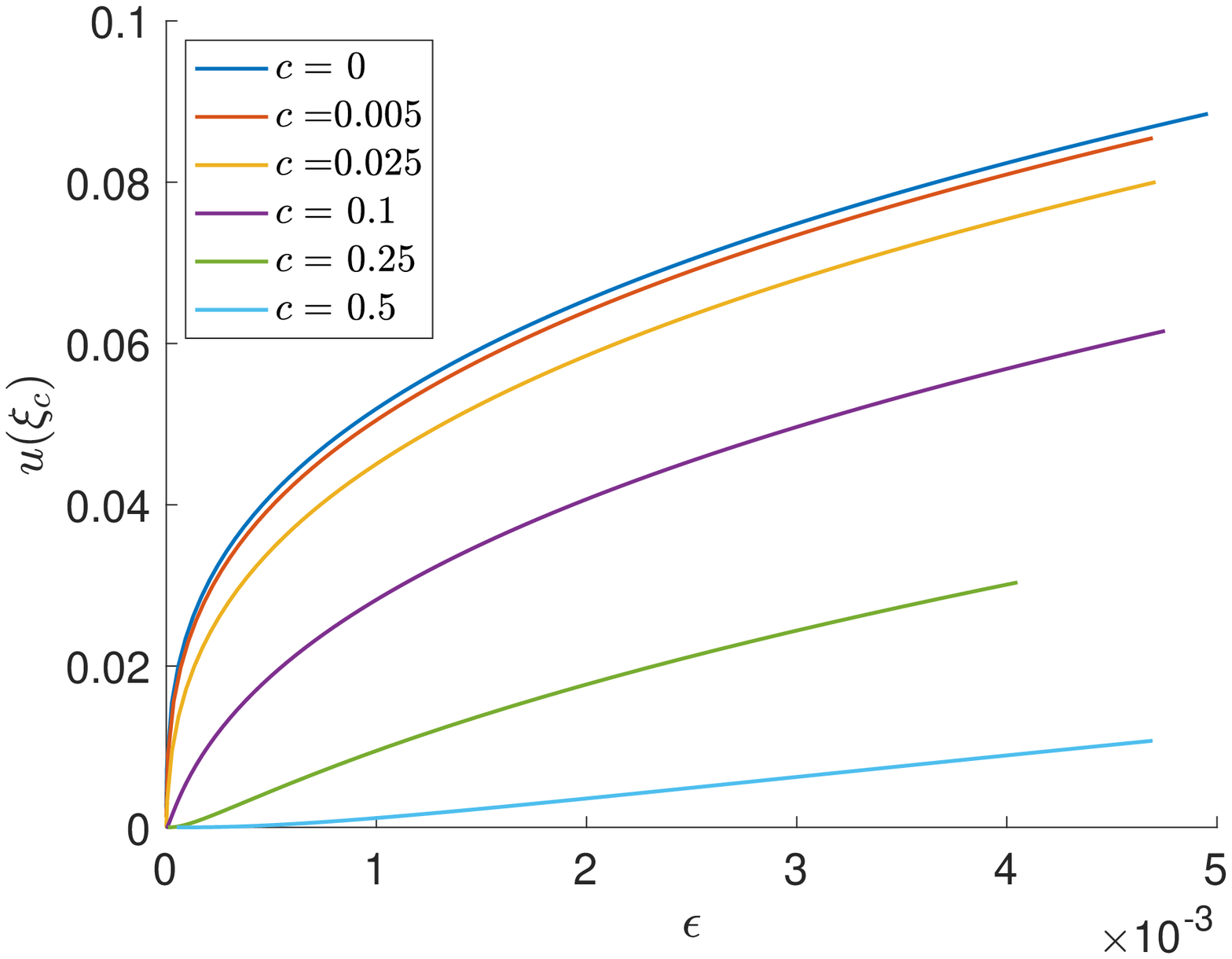}\hspace{-0.2in}\qquad
\includegraphics[trim = 0.0cm 0.0cm 0.05cm 0.05cm,clip,width=0.45\textwidth]{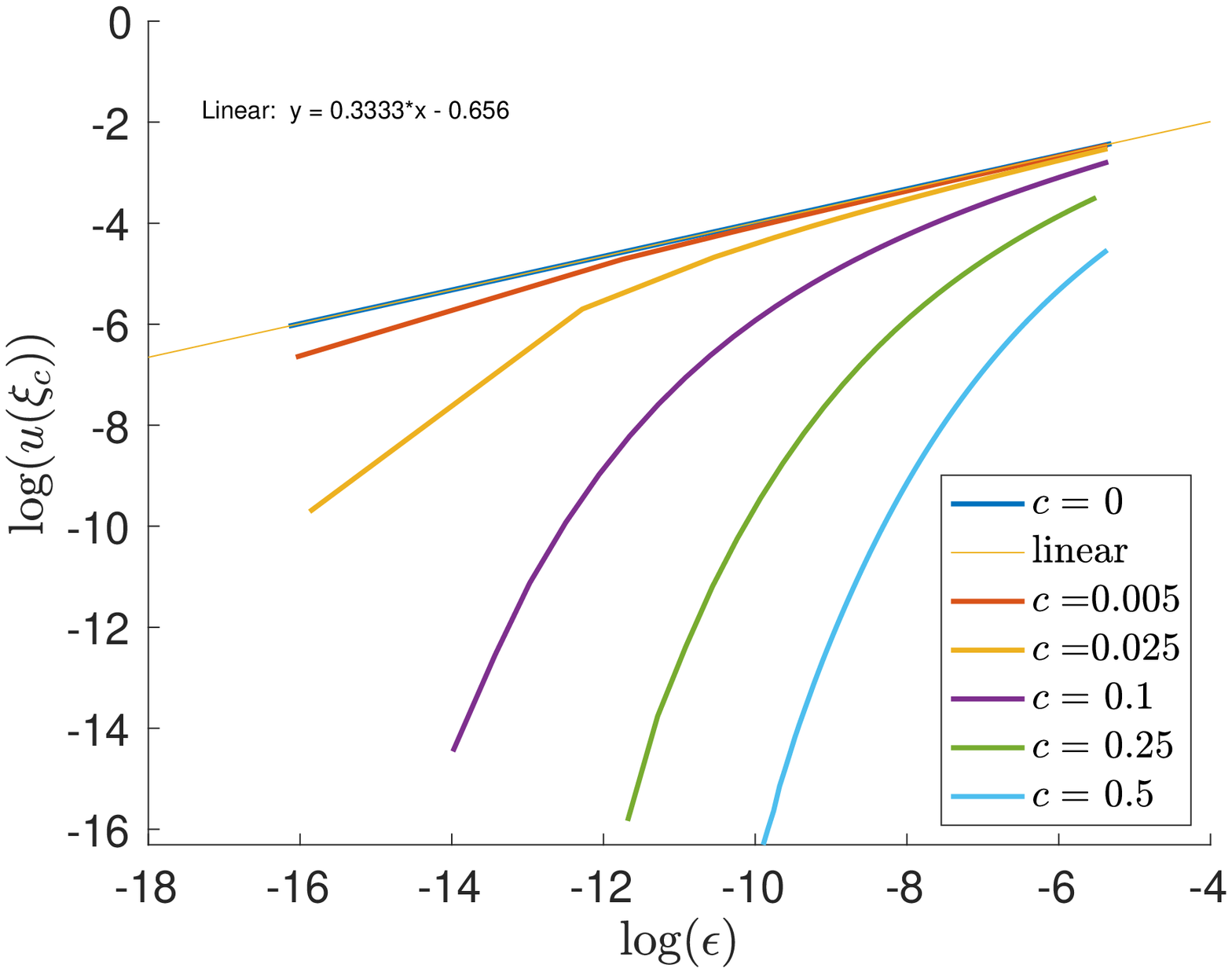}\hspace{-0.2in}
\caption{Left: plot of the values $u(\xi_c)$ against $\epsilon$ for a range of speeds $c$. Note the curves from convex to concave as $c$ decreases; Right: Log-log plot of $u(\xi_c)$ against $\epsilon$, showing that the front height becomes exponentially small in $\epsilon$ for moderate speeds $c$. Also included is a linear fit of the $c = 0$ curve (light yellow, with fit equation printed), indicating that $u(\xi_c)$ scales like $\epsilon^{1/3}$ in this case.  }\label{f:uceps}
\end{figure}

\subsection{$c>2$ and spatially homogeneous slow quenches}\label{s:homq}
As mentioned in the introduction, we expect no traveling wave solutions to exist for quenching speeds $c>2$. In the full dynamics of the PDE  \eqref{e:ac}, since $\mu$ approaches $1$ as $t\rightarrow+\infty$ for all points $x\in \R$,  we expect compactly supported perturbations to spread with asymptotic speed $2$.
To characterize this regime, we introduce an altered parameterization of the quench
\begin{align}\label{e:alp}
u_t &= u_{xx} + \mu(\alpha x - t)u - u^3,\\
\mu(\eta) &= -\tanh(\epsilon \eta), \quad \mu(0) = 0.
\end{align}
with a new traveling wave variable $\eta = \alpha x- t$. Here the new parameter $\alpha\in\R$ gives the speed of the moving quench as $1/\alpha$, and thus the range $\alpha>1/2$ corresponds to the case $c\in(0,2)$ studied above, while the range $\alpha\in(0,1/2)$ corresponds to $c>2$, and $\alpha = 0$ to a spatially homogeneous quench which uniformly renders the trivial state unstable.  In the latter two cases, one immediate question of  interest is how the front interface moves and how its speed asymptotically approaches 2.  In the $\alpha = 0$ case, where $\mu$ slowly varies from $-1$ to $1$ as time evolves from $t = -\infty$ to $t = +\infty$, uniformly in $x$, a leading-order heuristic prediction can be obtained using a simple characteristic argument. The uniform growth of $\mu$ causes perturbations of the trivial state to accelerate their growth as $t>0$ increases. Since the growth is slow, one ``freezes coefficients" so that the predicted instantaneous invasion speed at each fixed $t>0$ is given as $s(t) = 2\sqrt{\mu(t)}.$ Hence, given a localized perturbation lying near the origin with support contained in $[-x_0,x_0]$ for some $x_0$, one predicts the front location $x_\mathrm{fr}(t)$ to satisfy the characteristic equation
$$
\frac{dx_\mathrm{fr}}{dt} = s(t),\qquad x_\mathrm{fr}(0) = x_0,
$$
and hence is given as 
\begin{equation}\label{e:xf}
x_\mathrm{fr,pred}(t) = x_0 + \int_0^t 2\sqrt{\mu(\sigma)}d\sigma. 
\end{equation}
For $\mu(\eta) = -\tanh(\epsilon\eta)$, or alternatively for a purely linear ramp $\mu(\eta) = \epsilon \eta,$ it is possible to obtain $x_\mathrm{fr,pred}(t)$ in closed form. See Figure \ref{f:homq} for a comparison of the numerically measured front location $x_\mathrm{fr,num}$ and this prediction. We find, after an initial transient where the front establishes itself, the front location moves slightly faster than the prediction.

A simple heuristic argument supporting this finding goes as follows. The linear spreading in the stationary frame of a perturbation of the trivial state with exponential decay $\sim \re^{\nu x}$ is determined by a quantity known as the \emph{envelope velocity}, defined as $s_\mathrm{env}(\nu) = -\mathrm{Re} \lambda(\nu) / \mathrm{Re}\, \nu $, where $\lambda(\nu)$ is a root of the linear dispersion relation \eqref{e:ldsp} with $c = 0$. For a given $\mu$ and $\nu\in \R$, we find $s_\mathrm{env}(\nu) = -\frac{\nu^2+\mu}{\nu}$.  The linear spreading speed discussed is related to the envelope speed through $s_\mathrm{lin} = \min_{\nu\in \R} s_\mathrm{env}(\nu) = 2\sqrt{\mu}$. In the stationary frame, the $\nu<0$ (corresponding to rightward spreading waves) which minimizes the envelope velocity is given as $\nu = -\sqrt{\mu}$. Now let us return back to the slowly-varying quench $\mu = \mu(t)$.  At a given fixed time $t_1>0$, the above prediction for the invasion speed $s(t_1) = 2\sqrt{\mu(t_1)}$ would have a front with spatial decay $\nu(t_1) = -\sqrt{\mu(t_1)}$. Now for a time $t_2$ just after $t_1$, where $\mu$ has increased further, the envelope speed of this tail $s_\mathrm{env}(\nu(t_1))$ is greater than the predicted instantaneous speed for $\mu(t_2)$.  Hence we expect the front to accelerate faster than predicted in calculation \eqref{e:xf}. We anticipate that one can obtain a more refined prediction, as well as rigorous existence and asymptotics, by explicitly solving the linearized equation $v_t = v_{xx} + \mu(t) v$ to understand spreading asymptotics of exponential tails and then construct fronts using comparison principle methods. 

\begin{figure}[h!]
\centering
\includegraphics[trim = 0.05cm 0.05cm 0.05cm 0.05cm,clip,width=0.45\textwidth]{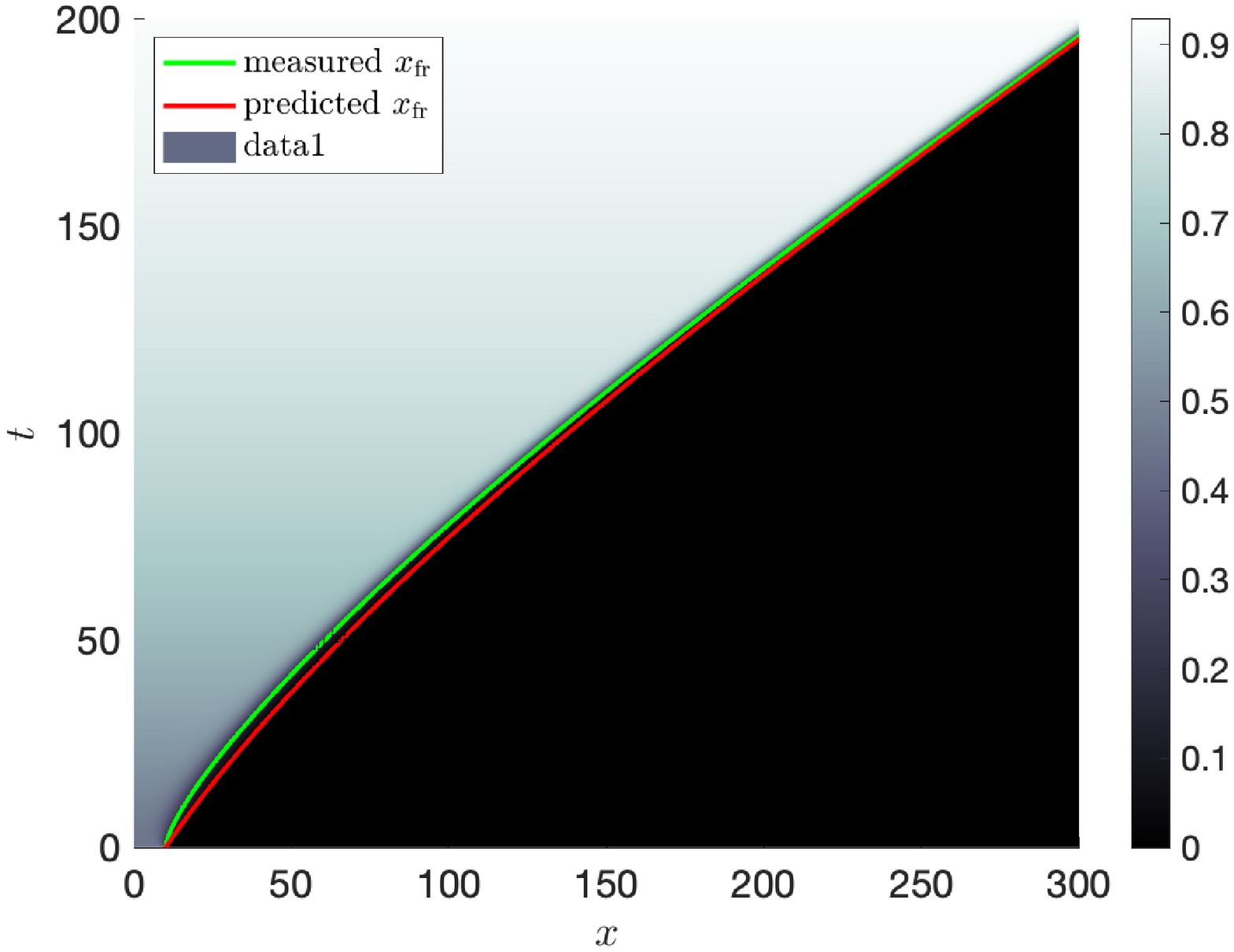}\hspace{-0.2in}
\includegraphics[trim = 0.0cm 0.0cm 0.05cm 0.0cm,clip,width=0.4\textwidth]{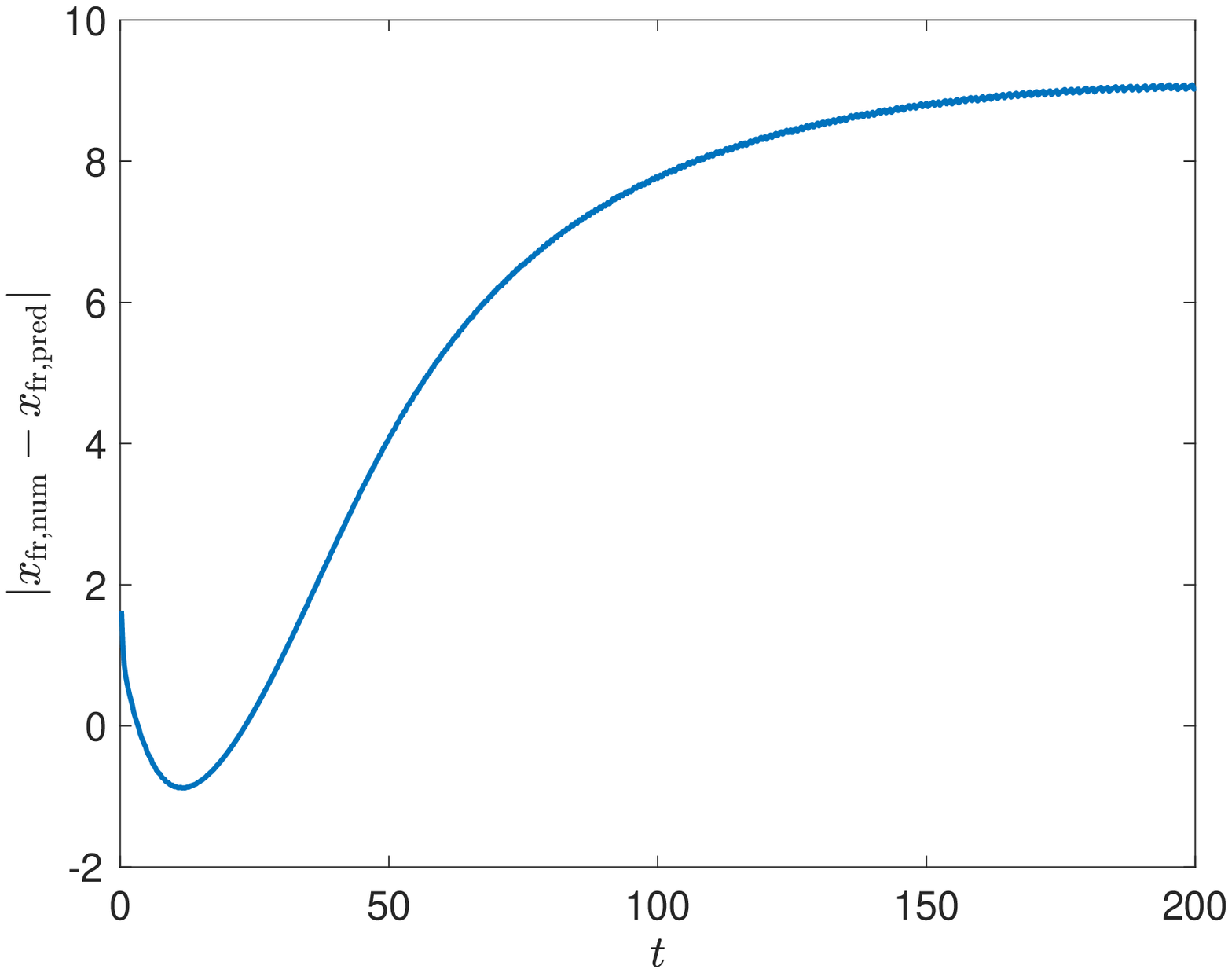}\hspace{-0.2in}\\
\caption{Direct numerical simulation of \eqref{e:alp} with $\epsilon = 0.005, \alpha = 0$; left: Spacetime diagram of the solution with measured front location $x_\mathrm{fr,num}$ (green) where $u(x,t) = 0.2$ and prediction of $x_\mathrm{fr,pred}$ from \eqref{e:xf}; right: depiction of the difference between the measured and predicted front location. }\label{f:homq}
\end{figure}


  \begin{Acknowledgment}
  The authors were partially supported by the National Science Foundation through grants NSF-DMS-2006887 (RG), NSF-DMS-1616064 (TK), and NSF DMS-1907391 and DMS-2205663 (AS). The authors thank the Mathematics Forschungsinstitut Oberwolfach for its hospitality during the workshop “Dynamics of Waves and Patterns” in August 2021. The authors would also like to thank S. Hastings for enlightening email communications on properties of the Hastings-McLeod solution of Painlev\'{e}-II and for the proof of Lemmas \ref{l:a1} and \ref{l:a3}. \end{Acknowledgment}

\appendix
\section{The potential is sign definite.}\label{a:1}

In this appendix, we prove
that the potential obtained from linearizing the
Painlev\'e II equation about the Hastings-McLeod solution
is sign definite. This result (see Lemma \ref{l:a2} below) is not only of use as a direct way to show in Proposition 6.2 that the ground state is sign definite, as remarked above, but it is also of independent interest for the Painlev\'e II equation.
Given the independent interest, we prove the result using the standard form \eqref{e:PII-intro} of the equation.

\begin{Lemma}\label{l:a1} (Hastings \cite{hastings22-privatecommunication})
For the Hastings-McLeod solution,
$w(\eta)$ of 
the Painlev\'e II equation $w'' = \eta w + 2 w^3,$
one has the following lower bound:
$w(\eta=0) 
\ge \mathrm{Ai}(0) 
= \frac{1}{3^{2/3} \Gamma\left( \frac{2}{3} \right)}$.
\end{Lemma}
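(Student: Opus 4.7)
The plan is to prove the strictly stronger claim $w(0) > \mathrm{Ai}(0)$ via a Wronskian-type comparison between $w := w_{HM}$ and $\mathrm{Ai}$ on the half-line $[0,\infty)$, where both functions are positive. I would work with the ratio $h(\eta) := w(\eta)/\mathrm{Ai}(\eta)$, which is smooth on $[0,\infty)$ since $\mathrm{Ai}$ is zero-free there, and which satisfies $h(\eta) \to 1$ as $\eta \to +\infty$ by the asymptotic \eqref{e:hma}. Using the two ODEs $w'' = \eta w + 2w^3$ and $\mathrm{Ai}'' = \eta\,\mathrm{Ai}$, a short computation gives the key identity
\begin{equation*}
\bigl(\mathrm{Ai}(\eta)^2\, h'(\eta)\bigr)' \;=\; w''\,\mathrm{Ai} - w\,\mathrm{Ai}'' \;=\; 2\, w(\eta)^3\,\mathrm{Ai}(\eta) \;>\; 0,
\end{equation*}
so $\mathrm{Ai}^2 h'$ is strictly increasing on $[0,\infty)$.

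Next, I would identify the limit of $\mathrm{Ai}^2 h'$ at $+\infty$. Because $w \sim \mathrm{Ai}$ and both decay super-exponentially, the integral $\int_0^\infty 2 w^3\,\mathrm{Ai}\, d\eta$ converges, so $\mathrm{Ai}^2(\eta)\, h'(\eta)$ tends to some finite limit $L$. I would then argue $L = 0$: if not, then $h'(\eta) \sim L\,\mathrm{Ai}(\eta)^{-2}$ grows in absolute value like $\sqrt{\eta}\,\exp(4\eta^{3/2}/3)$, and integrating such a rapidly growing derivative is incompatible with the finite limit $h(\eta) \to 1$. Combining $L = 0$ with the strict monotonicity of $\mathrm{Ai}^2 h'$ yields $\mathrm{Ai}^2(\eta)\, h'(\eta) < 0$ for every $\eta \in [0,\infty)$.

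Consequently, $h$ is strictly decreasing on $[0,\infty)$ with $h(+\infty) = 1$, which forces $h(0) > 1$, i.e., $w_{HM}(0) > \mathrm{Ai}(0)$. This is strictly stronger than the claimed inequality.

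The main obstacle is the justification of $L = 0$, which must rule out a nonzero limit of $\mathrm{Ai}^2 h'$ despite the very rapid blow-up of $\mathrm{Ai}^{-2}$. A cleaner alternative would be to invoke the sharper asymptotic $w - \mathrm{Ai} = O(\mathrm{Ai}^3)$ at $+\infty$, obtained by perturbatively solving the linearized Airy equation for $\psi := w - \mathrm{Ai}$ with forcing $2w^3 \sim 2\,\mathrm{Ai}^3$; this gives $W := w'\,\mathrm{Ai} - w\,\mathrm{Ai}' = \mathrm{Ai}^2 h' \to 0$ at $+\infty$ directly, bypassing the limit argument. Either route reduces the lemma to the elementary monotonicity statement above.
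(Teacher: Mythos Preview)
Your proposal is correct and rests on exactly the same Wronskian identity $(w'\mathrm{Ai} - w\,\mathrm{Ai}')' = 2w^3\mathrm{Ai} > 0$ and the same asymptotic $w/\mathrm{Ai}\to 1$ that the paper uses; the paper simply packages this as a proof by contradiction (if $w(0)<\mathrm{Ai}(0)$ then $h'>0$ somewhere, hence $W>0$ thereafter, hence $h'=W/\mathrm{Ai}^2\to\infty$, contradicting $h\to 1$), which lets it avoid your separate step of identifying the limit $L=0$. Your direct route yields the mildly stronger statement that $h=w/\mathrm{Ai}$ is strictly decreasing on $[0,\infty)$, but the two arguments are otherwise the same.
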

\begin{proof}
This lemma and its proof are due to
Professor Stuart Hastings \cite{hastings22-privatecommunication}.
By Theorem 2 of \cite{hastings80},
it is known that $\lim_{\eta \to \infty} \frac{w(\eta)}{\rm{Ai}(\eta)} = 1.$
So, suppose that $w(0) < \rm{Ai}(0)$.
Then, there is an $\eta_R > 0$ 
at which $\left( \frac{w}{\rm{Ai}} \right) ' > 0$.
Hence, at $\eta_R$, 
one has $w' \mathrm{Ai} - \mathrm{Ai}' w > 0.$
Next, observe that 
$\left( w' \mathrm{Ai} - \mathrm{Ai}' w \right)' (\eta)= 2 (w(\eta))^3 \rm{Ai}(\eta) > 0$
for all $\eta\ge 0$,
which implies that 
$$
\left( \frac{w}{\mathrm{Ai}} \right)' 
= \frac{ {w}' \mathrm{Ai} - \mathrm{Ai}' w}{\mathrm{Ai}^2} \to \infty, \ \ \ {\rm as} \ \ \eta \to \infty.
$$
This contradicts the asymptotics of $w(\eta)$.
Hence, the supposition that
$w(0) < \mathrm{Ai}(0)$ is incorrect,
and the lemma is proven.
\end{proof}

\bigskip
Lemma \ref{l:a1} is used as a key step in establishing 
the following result about the potential 
$\mathcal{V}(\eta) = \eta + 6(w(\eta))^2$,
obtained by linearizing the right hand side 
of the Painlev\'e II equation 
about the Hastings-McLeod solution.

\begin{Lemma}\label{l:a2} The potential  
$\mathcal{V}(\eta) = \eta + 6 (w(\eta))^2$
evaluated along the Hastings-McLeod solution $w(\eta)$
of the second Painlev\'e equation $w''=\eta w + 2 w^3$
is strictly positive for all $\eta \in \R$.
\end{Lemma}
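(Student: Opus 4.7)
The plan is to split according to the sign of $\eta$. For $\eta \ge 0$, positivity is automatic: $\mathcal{V}(\eta) = \eta + 6 w(\eta)^2 \ge 6 w(\eta)^2 > 0$ because the Hastings-McLeod solution is strictly positive (at $\eta = 0$ this strict positivity is also guaranteed by Lemma~\ref{l:a1}). All of the work is therefore concentrated on the regime $\eta < 0$, where I plan to argue by contradiction.

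Suppose $\mathcal{V}(\eta_*) \le 0$ for some $\eta_* < 0$. From the asymptotic $w(\eta)^2 \sim -\eta/2$ as $\eta \to -\infty$ one has $\mathcal{V}(\eta) \sim -2\eta \to +\infty$, and since $\mathcal{V}(0) > 0$, continuity forces $\mathcal{V}$ to attain a local minimum at some interior point $\eta_m \in (-\infty, 0)$ with $\mathcal{V}(\eta_m) \le 0$, $\mathcal{V}'(\eta_m) = 0$, and $\mathcal{V}''(\eta_m) \ge 0$. I will extract enough information from the first- and second-order conditions at $\eta_m$ to reach an inconsistency.

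Differentiating and eliminating $w''$ via the Painlev\'e II equation gives $\mathcal{V}' = 1 + 12 w w'$ and $\mathcal{V}'' = 12 (w')^2 + 12 \eta w^2 + 24 w^4$. From $\mathcal{V}'(\eta_m) = 0$ I read off $w(\eta_m) w'(\eta_m) = -1/12$, hence $(w'(\eta_m))^2 = 1/(144\, w(\eta_m)^2)$. From $\mathcal{V}(\eta_m) \le 0$ I have $\eta_m \le -6 w(\eta_m)^2$, so $12 \eta_m w(\eta_m)^2 \le -72\, w(\eta_m)^4$. Combining these with $\mathcal{V}''(\eta_m) \ge 0$ yields
\[
0 \;\le\; \mathcal{V}''(\eta_m) \;\le\; 12(w'(\eta_m))^2 - 48\, w(\eta_m)^4 \;=\; \frac{1}{12\, w(\eta_m)^2} - 48\, w(\eta_m)^4,
\]
which rearranges to $w(\eta_m)^6 \le 1/576$, i.e.\ $w(\eta_m) \le 1/(2\cdot 3^{1/3})$.

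To close the contradiction, I will invoke monotonicity of the Hastings-McLeod solution ($w' < 0$, so $w(\eta_m) > w(0)$ since $\eta_m < 0$) together with Lemma~\ref{l:a1}, which gives $w(0) \ge \mathrm{Ai}(0) = 1/(3^{2/3}\Gamma(2/3))$. Incompatibility with the upper bound $w(\eta_m) \le (2\cdot 3^{1/3})^{-1}$ then reduces to the numerical inequality $\Gamma(2/3)\cdot 3^{1/3} < 2$, which holds comfortably ($\Gamma(2/3)\approx 1.354$ versus $2/3^{1/3} \approx 1.387$). This contradicts the existence of $\eta_m$ and finishes the proof. The main obstacle I anticipate is precisely this final quantitative margin: the strategy works only because $\mathrm{Ai}(0)$ exceeds the critical value $(2\cdot 3^{1/3})^{-1}$ extracted from the second-order test, which is why the sharp lower bound of Lemma~\ref{l:a1} (rather than mere positivity of $w$) is indispensable.
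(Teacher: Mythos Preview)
Your proof is correct and follows essentially the same strategy as the paper: analyze a local minimum of $\mathcal{V}$ on the negative half-line, combine the first- and second-order conditions with the Painlev\'e II equation to extract the threshold $w(\eta_m) \le (576)^{-1/6} = (2\cdot 3^{1/3})^{-1}$, and then contradict this via monotonicity and Lemma~\ref{l:a1}. The paper reaches the identical numerical threshold but does so through the auxiliary coordinate $w = \sqrt{-\eta/2}\,z$ and shows positivity at every local minimum rather than arguing by contradiction; your direct computation in the $(w,\eta)$ variables, using the hypothesis $\mathcal{V}(\eta_m)\le 0$ to bound $12\eta_m w^2 \le -72 w^4$, is cleaner and avoids that change of variables.
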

\begin{proof}
First, for all $\eta \ge 0$,
one sees directly that $\mathcal{V}(\eta)>0$,
since $w(\eta)>0$ for all $\eta$
by Theorem 1 of \cite{hastings80}.
Also, $\mathcal{V}(\eta) > 0$ for $\eta_0 \le \eta < 0$,
where $\eta_0<0$ is the unique point at which 
$w''(\eta)=0$ (recall 
Theorem 1 of \cite{hastings80}), since
$\left(\eta + 2(w(\eta))^2\right) w(\eta)=w''(\eta)>0$
for all $\eta> \eta_0$ 
and $w>0$ for all $\eta$.

The difficult part of the proof
is to show that $\mathcal{V}(\eta)>0$ 
also for all $\eta<\eta_0$.
This may be accomplished as follows.
The potential $\mathcal{V}(\eta)\to \infty$
as $\eta \to -\infty$.
Hence, there is some $\eta_L<0$
sufficiently negative such that
$\mathcal{V}(\eta)>0$ on 
$(-\infty,\eta_L]$.
Now, on the interval
$(\eta_L,\eta_0)$,
we use the coordinate change 
$w(\eta) = \sqrt{-\eta/2} z(\eta)$.
Here, $z(\eta)$ satisfies 
$ \frac{d^2 z}{d\eta^2} + \frac{1}{\eta} \frac{dz}{d\eta} 
= \frac{z}{4\eta^2} + \eta z (1 - z^2),$
which is equation (2.4) 
with $\alpha=0$ in \cite{hastings80}.
It suffices to show that
$\mathcal{V}$,
which is now $\mathcal{V}(\eta)=(-\eta)(3 z^2 - 1)$,
is strictly positive
at any local minimum of $\mathcal{V}$
on $({\eta}_L,{\eta}_0)$.
At a local minimum $\eta_m$ of $\mathcal{V}$,
$\frac{dz}{d\eta} (\eta_m)= \frac{3 (z(\eta_m))^2 - 1}{-6\eta_m z(\eta_m)}.$
Substituting this into the condition that 
$\frac{d^2 \mathcal{V}}{d\eta^2} > 0$ at a local minimum,
one finds that 
$(z(\eta_m))^2-1 > \frac{1}{36(z(\eta_m))^4 (\eta_m)^3}$
at any local minimum of $\mathcal{V}$ on this interval.
Hence, at a local minimum,
the key term in the potential satisfies
$$
3(z(\eta_m))^2 - 1 > 2(z(\eta_m))^2 + \frac{1}{36(z(\eta_m))^4 (\eta_m)^3}.
$$
Now, the term in the right member is strictly positive
as long as 
$w(\eta_m) = \sqrt{-\eta_m/2}\, z(\eta_m) > (576)^{-1/6} = 0.34668\ldots$,
as may be seen by a straightforward calculation.
Moreover, 
$w(\eta_m) > w(0)$,
since $\frac{dw}{d\eta} (\eta)<0$ 
for all $\eta$ by Theorem 1 of \cite{hastings80},
and $w(0) > \mathrm{Ai}(0)= \frac{1}{3^{2/3} \Gamma\left(\frac{2}{3}\right)}
=0.355028\ldots$, by Lemma 1.
Therefore, 
$3(z(\eta_m))^2 - 1 > 0$ 
at any local minimum 
on $(\eta_L,\eta_0)$, 
and hence $\mathcal{V}(\eta)>0$ 
for all $\eta \in 
(\eta_L,\eta_0)$. 
This completes the proof of the lemma.
\end{proof}

The proof of Lemma A.2 involves analysis of local minima of $\mathcal{V}$
and relies on Lemma A.1. An alternative proof of the positivity of the
potential $\cal{V}$ evaluated along the Hastings-McLeod solution $w$ may
be obtained using the method of proof by contradiction, as follows:

\begin{Lemma}\label{l:a3}
(Hastings \cite{hastings22-privatecommunication})
The Hastings-McLeod solution $w(\eta)$
of the second Painlev\'e equation satisfies
$w(\eta) > \sqrt{-\eta/6}$
for $\eta \in (-\infty,0]$.
\end{Lemma}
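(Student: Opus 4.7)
The plan is to reformulate the strict inequality $w(\eta) > \sqrt{-\eta/6}$ as the strict positivity of the smooth auxiliary function $G(\eta) := w(\eta)^2 + \eta/6$ on $(-\infty, 0]$. Since $w>0$, the two statements are equivalent, and $G$ is easier to analyze than $w - \sqrt{-\eta/6}$ because it is smooth at $\eta=0$. Note that $G$ is, up to a positive constant, the potential $\mathcal{V}$ of Lemma~\ref{l:a2}, so establishing $G>0$ gives an alternative path to Lemma~\ref{l:a2}.

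First I check the boundary behavior. At $\eta = 0$, $G(0) = w(0)^2 > 0$ since $w>0$ everywhere by Theorem~1 of Hastings--McLeod. As $\eta \to -\infty$, the asymptotics in \eqref{e:hmb} give $G(\eta) \sim -\eta/2 + \eta/6 = -\eta/3 \to +\infty$. Thus if $G$ were ever non-positive on $(-\infty,0]$, then by continuity and the behavior at the two ends, $G$ would attain its infimum at some interior point $\eta_m \in (-\infty, 0)$ with $G(\eta_m) \leq 0$, $G'(\eta_m) = 0$, and $G''(\eta_m) \geq 0$. I aim to rule this out.

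Differentiating and inserting the Painlev\'e~II equation $w''=\eta w + 2w^3$ yields
\[
G'(\eta) = 2ww' + \tfrac{1}{6}, \qquad G''(\eta) = 2(w')^2 + 2\eta w^2 + 4w^4.
\]
The first-order condition $G'(\eta_m)=0$ forces $w(\eta_m)w'(\eta_m) = -1/12$, hence (using $w>0$) $|w'(\eta_m)| = 1/(12\,w(\eta_m))$. The assumption $G(\eta_m) \leq 0$ rearranges to $\eta_m \leq -6\, w(\eta_m)^2$, which gives $2\eta_m w(\eta_m)^2 \leq -12\, w(\eta_m)^4$. Substituting into $G''(\eta_m) \geq 0$ yields $2(w'(\eta_m))^2 \geq 8\, w(\eta_m)^4$, i.e., $|w'(\eta_m)| \geq 2\, w(\eta_m)^2$. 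Combined with the first-order identity, this forces $w(\eta_m)^3 \leq 1/24$.

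The remaining step closes the contradiction by beating the tight numerical constant $24^{-1/3}$. Since $w'<0$ everywhere by Theorem~1 of Hastings--McLeod and $\eta_m < 0$, one has $w(\eta_m) > w(0)$, and Lemma~\ref{l:a1} gives $w(0) \geq \mathrm{Ai}(0) = 1/(3^{2/3}\,\Gamma(2/3))$. It therefore suffices to verify the transcendental inequality
\[
\frac{1}{3^{2/3}\,\Gamma(2/3)} > 24^{-1/3} = \frac{1}{2\cdot 3^{1/3}},
\]
equivalently $3^{1/3}\Gamma(2/3) < 2$. This is the main (though modest) obstacle: the inequality is genuinely tight, with $3^{1/3}\Gamma(2/3) \approx 1.953$, but it can be confirmed rigorously from the reflection formula $\Gamma(2/3)\Gamma(1/3) = 2\pi/\sqrt{3}$ combined with any standard lower bound on $\Gamma(1/3)$ (for instance via a controlled Stirling remainder or direct quadrature of the defining integral). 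Granted this, $w(\eta_m) > 24^{-1/3}$ contradicts $w(\eta_m)^3 \leq 1/24$, completing the proof.
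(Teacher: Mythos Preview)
Your proof is correct and closely parallel in spirit to the paper's, but takes a somewhat different route. The paper argues by sliding a one-parameter family of comparison curves $f_\alpha(\eta)=\sqrt{-\eta/\alpha}$, $\alpha\ge 6$, and finding the greatest $\alpha^*$ at which $f_{\alpha^*}$ is tangent to $w$; the tangency conditions then force $\eta^*\ge -0.73$ and hence $w(\eta^*)\le 0.349$, contradicting $w(\eta^*)>w(0)\ge \mathrm{Ai}(0)\approx 0.355$ via Lemma~\ref{l:a1}. You instead work directly with the smooth auxiliary $G(\eta)=w^2+\eta/6=\mathcal{V}(\eta)/6$ and analyze a hypothetical interior minimum where $G\le 0$; the first- and second-order conditions give $w(\eta_m)^3\le 1/24$, i.e.\ $w(\eta_m)\le 24^{-1/3}\approx 0.347$, and you close with the same appeal to Lemma~\ref{l:a1}. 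Your argument is arguably more direct (no parametric family, no change of variable) and, as you observe, simultaneously yields Lemma~\ref{l:a2} on $(-\infty,0]$; the paper's sliding-family argument, on the other hand, makes the geometric picture of a tangent comparison curve explicit. Both approaches hinge on the same tight numerical inequality $\mathrm{Ai}(0)>24^{-1/3}$ (equivalently $3^{1/3}\Gamma(2/3)<2$), which you flag but do not fully verify; the paper handles this step at the same level of rigor by quoting the decimal values.
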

\begin{proof}
For each $\alpha\ge 6$,
define $f_\alpha (\eta)= \sqrt{-\eta/\alpha}$ on $(-\infty,0]$.
Since $w(\eta) \sim \sqrt{-\eta/2}$ as $\eta\to -\infty$,
there exists an $\eta_L<0$ such that $f_6 (\eta) < w(\eta)$
for $\eta \le \eta_L$.
Moreover, since $f_\alpha (\eta) < f_6 (\eta)$ for $\alpha > 6$
on $(-\infty,0)$,
$f_\alpha (\eta) < w (\eta)$
on $(-\infty,\eta_L]$
for all $\alpha>6$, as well.

Next, since $f_\alpha \to 0$ as $\alpha \to \infty$
uniformly on $[\eta_L,\infty)$,
there is an $\alpha_L >0$ such that
$f_\alpha (\eta)<w(\eta)$ on $(-\infty,0]$
for all $\alpha \ge \alpha_L$.
Hence, if there is a point $\eta$
at which $f_\alpha(\eta)=w(\eta)$
for some $\alpha \ge 6$,
then that point $\eta$ must lie in $[\eta_L,0]$.
Also, if this is true for some $\alpha\ge 6$,
then there must exist a greatest such value,
call it $\alpha^*$.
Moreover, any point of intersection
of $f_{\alpha^*}$ with $w$
must be a point of tangency,
with $f_{\alpha*}(\eta)\le w(\eta)$ on $(-\infty,0]$,
otherwise by continuity $\alpha^*$
would not be the greatest value.

Now, suppose that $\eta^*$ is such a point of tangency
between $f_{\alpha^*}$ and $w$.
At $\eta^*$, one has $f_{\alpha^*}=w>0$,
$f_{\alpha^*}'=w' <0$, and
$f_{\alpha^*}'' \le w''$
(where the sign of $w''$ is unknown).
Also, one has
$w''= \eta^* w + 2 w^3= \eta^* f_{\alpha^*}+2f_{\alpha^*}^3$.
Then, calculating $f_{\alpha^*}''$,
one obtains
$\frac{-1}{4{\alpha^*}^2 f_{\alpha^*}^3}
\le \eta^* f_{\alpha^*} + 2f_{\alpha^*}^3$.
In turn, this implies that
$\frac{-1}{4{\alpha^*}^2}
\le \eta^* f_{\alpha^*}^4 + 2f_{\alpha^*}^6
=\frac{\eta^3}{{\alpha^*}^2}\left( 1 - \frac{2}{\alpha^*}\right)
\le \frac{\eta^3}{{\alpha^*}^2}\left( 1 - \frac{2}{6} \right) <0$.
Hence, ${\eta^*}^3 \ge -\frac{3}{8}$,
and one may bound $\eta^*$ from below as
${\eta^*} \ge -0.73$.
Thus, for any such $\alpha^* \ge 6$,
one finds that
$w(\eta^*)=f_{\alpha^*}(\eta^*)\le 0.349.$
However, this is a contradiction,
since $w(\eta^*) > w(0) \ge \frac{1}{3^{2/3}\Gamma(2/3)} \ge 0.355$,
by Lemma \ref{l:a1}.
Therefore, there cannot be any such $\alpha^* \ge 6$,
and we have $f_6(\eta) < w(\eta)$ for all $\eta \in (-\infty,0]$.
This completes the proof of the lemma.
\end{proof}

\bibliography{ac_slow}
\bibliographystyle{plain}

\end{document}